\definecolor{rot}{rgb}{0.75 , 0.00 , 0.00}
\definecolor{blau}{rgb}{0.00 , 0.00 , 0.5}
\theoremstyle{definition}
\newtheorem{defn}{Definition}
\theoremstyle{plain}
\newtheorem{thm}[defn]{Theorem}
\newtheorem{prop}[defn]{Proposition}
\newtheorem{lmm}[defn]{Lemma}
\newtheorem{cor}[defn]{Corollary}
\theoremstyle{remark}
\newtheorem*{rmk}{Remark}
\newtheorem*{ex}{Example}
\newcommand{\vertiii}[1]{{\left\vert\kern-0.25ex\left\vert\kern-0.25ex\left\vert #1
    \right\vert\kern-0.25ex\right\vert\kern-0.25ex\right\vert}}
\renewcommand*\env@matrix[1][*\c@MaxMatrixCols c]{%
  \hskip -\arraycolsep
  \let\@ifnextchar\new@ifnextchar
  \array{#1}}
\DeclareMathOperator{\UC}{UC}
\DeclareMathOperator{\BUC}{BUC}
\DeclareMathOperator{\VO}{VO_\partial}
\DeclareMathOperator{\VMO}{VMO}
\newcommand{\phol}{P^\lambda}
\newcommand{\pah}{P_{\text{ah}}^\lambda}
\newcommand{\pph}{P_{\text{ph}}^\lambda}
\newcommand{\pcons}{P_{\mathbb{C}}^\lambda}
\newcommand{\ahol}{\mathcal{A}_{\lambda}^2(\Omega)}
\newcommand{\aah}{\mathcal{A}_{\lambda, \text{ah}}^2(\Omega)}
\newcommand{\aph}{\mathcal{A}_{\lambda, \text{ph}}^2(\Omega)}
\newcommand{\acons}{\mathcal{A}_{\mathbb{C}}^2(\Omega)}
\title{Toeplitz operators on pluriharmonic function spaces: Deformation quantization and spectral theory}
\author{Robert Fulsche}
\begin{document}
\maketitle
\begin{abstract}
Quantization and spectral properties of Toeplitz operators acting on spaces of pluriharmonic functions over bounded symmetric domains and $\mathbb C^n$ are discussed. Results are presented on the asymptotics
\begin{align*}
\| T_f^\lambda\|_\lambda &\to \| f\|_\infty\\
\| T_f^\lambda T_g^\lambda - T_{fg}^\lambda\|_\lambda &\to 0\\
\| \frac{\lambda}{i} [T_f^\lambda, T_g^\lambda] - T_{\{f,g\}}^\lambda\|_\lambda &\to 0
\end{align*}
for $\lambda \to \infty$, where the symbols $f$ and $g$ are from suitable function spaces. Further, results on the essential spectrum of such Toeplitz operators with certain symbols are derived.

\medskip
\textbf{AMS subject classification:} Primary: 47B35; Secondary: 30H20, 47A53, 81S10

\medskip
\textbf{Keywords:} Toeplitz operators, pluriharmonic functions, quantization, essential spectrum  
\end{abstract}
\section{Introduction}
Let $\Omega \subset \mathbb C^n$ be a domain and for each $\lambda \in \mathbb R$ sufficiently large let $v_\lambda$ be a probability measure on $\Omega$. Consider the family of Bergman or Segal-Bargmann spaces
\[ \ahol := L^2(\Omega, dv_\lambda) \cap \text{Hol}(\Omega). \]
Each $\ahol$ is known to be a closed subspace of $L^2(\Omega, dv_\lambda)$, hence there exists an orthogonal projection $P^\lambda: L^2(\Omega, dv_\lambda) \to \ahol$. To each $f \in L^\infty(\Omega)$ associate the family of Toeplitz operators
\[ T_f^\lambda: \ahol \to \ahol, \quad T_f^\lambda(g) = P^\lambda(fg). \]
This assignment $f \mapsto T_f^\lambda$ is a common model for quantization, the so-called Toeplitz-quantization. If we consider the derformation quantization in the sense of Rieffel \cite{Rieffel}, the following properties should hold for a sufficiently large class of symbols $f,g$: \
\begin{align}
\lim_{\lambda \to \infty} \| T_f^\lambda\|_\lambda &= \| f\|_\infty \label{eq:1}\\
\lim_{\lambda \to \infty} \| T_f^\lambda T_g^\lambda - T_{fg}^\lambda\|_\lambda &= 0 \label{eq:2} \\
\lim_{\lambda \to \infty} \| \frac{\lambda}{i}[T_f^\lambda, T_g^\lambda] - T_{\{f,g\}}^\lambda\|_\lambda &= 0 \label{eq:3}
\end{align}
Here, we will always assume $\Omega$ to be either $\mathbb C^n$ or a bounded symmetric domain (always considered with the standard weights as discussed below). A lot of work has been done to understand the  quantization properties (\ref{eq:1})-(\ref{eq:3}) in these cases, see e.g. \cite{Bauer_Coburn_Hagger, Bauer_Hagger_Vasilevski2017, Coburn1992, Hagger2018_2} and references therein.

A related question is the spectral theory of Toeplitz operator $T_f^\lambda$ for fixed $\lambda$. If we again assume $\Omega$ to be $\mathbb C^n$ or a bounded symmetric domain, the essential spectrum is well understood: It consists of the boundary values of its symbols (in a certain sense), c.f. \cite{Al-Qabani_Virtanen, Fulsche_Hagger, Hagger2017, Hagger2019} and references therein for the most recent results.

In this work, we investigate these properties in the setting of Hilbert spaces consisting of pluriharmonic functions instead of spaces of holomorphic functions. Toeplitz operators on pluriharmonic function spaces have been studied in a few places, e.g. \cite{Bauer_Furutani, Englis2009}. Yet, many properties still need to be discussed for this setting.

We will analyze both the quantization properties (\ref{eq:1})-(\ref{eq:3}) for a sufficiently large class of symbols and spectral theory for $\VMO_\partial$ symbols. As it turns out (and has already been observed, e.g. in \cite{Englis}) the property (\ref{eq:3}) fails to hold completely (we will repeat the argument for completeness below). Yet, the properties (\ref{eq:1}) and (\ref{eq:2}) hold in the same way as for holomorphic function spaces. For the essential spectrum, we will obtain the same result as for the holomorphic function spaces if the symbol fulfills certain oscillation conditions. Finally, as the quantization property (\ref{eq:3}) fails, pluriharmonic function spaces do not allow for a full quantization procedure. Yet, the other quantization properties (in particular (\ref{eq:2})) have applications of independend interest. We will discuss one of such applications, motivated by results in \cite{Bauer_Hagger_Vasilevski2018, Bauer_Vasilevski2017}.

There are in principle two different approaches to the theory of Toeplitz operators on spaces of pluriharmonic functions. The first one would be to attack the problems directly through hard analysis, possibly immitating proofs from the case of holomorphic function spaces. In this paper, we follow a different idea: Each pluriharmonic function (say, on a simply connected domain) can be written as the sum of a holomorphic and an anti-holomorphic function. This gives rise to a decomposition of the spaces of pluriharmonic functions into the orthogonal sum of two spaces (Bergman spaces of holomorphic and anti-holomorphic functions), which allows us to use established results on Toeplitz operators over holomorphic functions for proving results on Toeplitz operators over pluriharmonic function spaces. The approach also has the advantage that we do not need to distinguish in our proofs between $\Omega = \mathbb C^n$ or $\Omega$ a bounded symmetric domain.

The paper is organized as follows: In Section \ref{sec:prelims}, we settle the basic definitions and recall important results. In Section \ref{sec:quant}, the quantization properties (\ref{eq:1})-(\ref{eq:3}) are studied over pluriharmonic function spaces. Section \ref{sec:spectheory} provides the results on the essential spectrum for pluriharmonic Toeplitz operators with suitable symbols. An application of the quantization property (\ref{eq:2}) in spectral theory is discussed in Section \ref{sec:specdecomp}. Finally, Appendix \ref{appendixA} is added where we provide a result on Toeplitz quantization over the holomorphic Bergman spaces of bounded symmetric domains.
\section{Preliminaries}\label{sec:prelims}
Let $\Omega \subseteq \mathbb{C}^n$ be open and connected. A pluriharmonic function on $\Omega$ is a $C^2$-function $f: \Omega \to \mathbb{C}$ such that
\begin{align*}
\frac{\partial^2 f}{\partial z_j \partial \overline{z}_k} = 0
\end{align*}
for all $j, k = 1, \dots, n$. If $\Omega$ is simply connected one can show that for each pluriharmonic function $f$ on $\Omega$ there are unique holomorphic functions $g, h$ on $\Omega$ with $h(0) = 0$ and
\begin{equation}\label{decompfun}
f = g + \overline{h}.
\end{equation}
We will mainly be concerned with two kinds of domains $\Omega$:
\begin{enumerate}
\item $\Omega = \mathbb{C}^n$,
\item $\Omega$ a bounded symmetric domain in $\mathbb{C}^n$.
\end{enumerate}
The class of bounded symmetric domains includes of course the case where $\Omega = \mathbb B^n$, the open unit ball in $\mathbb C^n$. While we will prove all relevant results on both the unit ball and $\mathbb{C}^n$, we will have to exclude the case of general bounded symmetric domains in some cases - the quantization property (2) for $\VMO$-symbols so far has only been proven in the holomorphic Bergman space setting of $\mathbb B^n$ and not general bounded symmetric domains (cf. Theorem \ref{thm1} below).

On each of these domains, we will consider weighted Hilbert spaces of holomorphic, antiholomorphic or pluriharmonic functions as defined in the following.
\begin{ex}[Segal-Bargmann spaces]
For $\lambda > 0$ let $v_\lambda$ be the measure
\begin{align*}
dv_\lambda(z) = \Big (\frac{\lambda}{\pi} \Big )^n e^{-\lambda |z|^2} dv(z)
\end{align*}
on $\mathbb{C}^n$, where $dv(z)$ is just the usual Lebesgue measure on $\mathbb R^{2n} \cong \mathbb C^n$. $v_\lambda$ is easily seen to be a probability measure. The (holomorphic) Segal-Bargmann spaces $F_\lambda^2(\mathbb{C}^n)$ are the closed subspaces of $L_\lambda^2(\mathbb{C}^n) := L^2(\mathbb{C}^n, dv_\lambda)$ consisting of holomorphic functions. These are reproducing kernel Hilbert spaces with kernels given by
\[ K^\lambda(w,z) = e^{\lambda w \cdot \overline{z}}, \]
where $w \cdot z$ denotes the Euclidean inner product on $\mathbb{C}^n$, being linear in both components.
In an abuse of notation, we will also write $\mathcal{A}_\lambda^2(\mathbb{C}^n)$ instead of $F_\lambda^2(\mathbb{C}^n)$. When we consider a metric on $\mathbb{C}^n$, we mean the usual Euclidean metric
\[ d(z,w) = |z-w|. \]
\end{ex}
\begin{ex}[Bergman spaces on the unit ball]
On $\mathbb{B}^n$ we consider for $\lambda > -1$ the probability measures
\[ dv_\lambda(z) = \frac{\Gamma(n + 1 + \lambda)}{\pi^n \Gamma(\lambda + 1)}(1-|z|^2)^{\lambda} dv(z) \]
Denote by $\mathcal{A}_\lambda^2(\mathbb{B}^n)$ the standard weighted (holomorphic) Bergman space, i.e. the closed subspace of $L_\lambda^2(\mathbb{B}^n) := L^2(\mathbb{B}^n, dv_\lambda)$ consisting of holomorphic functions. Again, $\mathcal{A}_\lambda^2(\mathbb{B}^n)$ is a reproducing kernel Hilbert space with kernel
\[ K^\lambda(w,z) = \frac{1}{(1 - w \cdot \overline{z})^{n + 1 + \lambda}}. \]
We usually consider the unit ball with the metric
\[ d(z,w) = \beta(z,w), \]
$\beta$ being the hyperbolic metric.
\end{ex}
\begin{ex}[Bergman spaces on bounded symmetric domains]
Let $\Omega \subset \mathbb{C}^n$ be a bounded symmetric domain, considered in its Harish-Chandra realization, cf. \cite{Bekolle_Berger_Coburn_Zhu1990, Helgason, Loos, Upmeier}. In particular, $\Omega$ is simply connected (cf. \cite[p. 311]{Helgason}) and contains the origin. Recall that the unit ball $\mathbb{B}^n$ is a particular case of such a bounded symmetric domain, the objects we are going to define below are then the same as already defined for this case.

Denote by $p$ the genus of $\Omega$ and let 
\[ h: \mathbb{C}^n \times \mathbb{C}^n \to \mathbb{C} \]
be the Jordan triple determinant of $\Omega$, which is a certain polynomial holomorphic in the first and anti-holomorphic in the second argument. For $\lambda > p-1$ the measure $v_\lambda$ on $\Omega$ is defined as
\[ dv_\lambda(z) = c_\lambda h(z,z)^{\lambda - p}dv(z), \]
where the constant $c_\lambda$ is chosen such that $v_\lambda$ is a probability measure. $\mathcal{A}_\lambda^2(\Omega)$, the holomorphic Bergman space, is defined as the closed subspace of $L_\lambda^2(\Omega) := L^2(\Omega, dv_\lambda)$ consisting of holomorphic functions. The reproducing kernel of $\mathcal{A}_\lambda^2(\Omega)$ is given by
\[ K^\lambda(w,z) = h(w,z)^{-\lambda}. \]
It is worth mentioning that $K^\lambda(w,0) = 1$ for each $w \in \Omega$. The metric
\[ d(z,w) = \beta(z,w) \]
considered on the bounded symmetric domain is the Bergman distance function $\beta$ obtained from the Riemannian metric with tensor
\begin{equation} \label{metric}
(g_{ij}(z))_{i,j} = \Big ( \frac{\partial^2}{\partial z_i \partial \overline{z}_j} \log K^p(z,z) \Big )_{i,j}.
\end{equation}
\end{ex}
\begin{rmk}
Even in the case of Segal-Bargmann spaces, the metric $d$ is obtained from the Bergman kernel $K^1$ by the formula (\ref{metric}). Since we are going to deal with pluriharmonic function spaces, it is natural to ask whether one should rather define the metric $d$ using the pluriharmonic reproducing kernel (defined below). It turns out that the metric induced by the pluriharmonic Bergman kernel is equivalent to the metric induced by the holomorphic Bergman kernel, hence we may use the usual metric.
\end{rmk}
We will always denote the norm of $L_\lambda^2(\Omega)$ by $\| \cdot\|_\lambda$ and the corresponding inner product by $\langle \cdot, \cdot\rangle_\lambda$. We will also denote by $\| \cdot\|_\lambda$ the operator norm of operators acting on $L_\lambda^2(\Omega)$ or a closed subspace (it will always be clear from the context on which space the operator acts). In contrast, the norm of $L^1(\Omega, dv)$ will be denoted by $\| \cdot \|_{L^1}$. For all the above choices of $\Omega$, we also define the anti-holomorphic and pluriharmonic Bergman spaces (resp. Segal-Bargmann spaces): Define $\aah$ (resp. $\mathcal{A}_{\lambda,\text{ah}}^2(\mathbb{C}^n) := F_{\lambda, \text{ah}}^2(\mathbb{C}^n))$ as the subspace of $L_\lambda^2(\Omega)$ consisting of anti-holomorphic functions and the spaces $\aph$ (resp. $\mathcal{A}_{\lambda,\text{ph}}^2(\mathbb{C}^n) := F_{\lambda, \text{ph}}^2(\mathbb{C}^n))$ as the closed subspaces of $L_\lambda^2(\Omega)$ consisting of pluriharmonic functions. Furthermore, we denote the constant functions by $\acons$. 

There are several relations between these spaces. First of all, observe that there is an isometric $1-1$ correspondence between $\ahol$ and $\aah$ via $f \mapsto \overline{f}$. For each polynomial $p$ in $z = (z_1, \dots, z_n)$ and $q$ in $\overline{z} = (\overline{z}_1, \dots, \overline{z}_n)$ with $q(0) = 0$ it holds
\[ \langle p, q\rangle_{\lambda} = \langle p q^\ast, K^\lambda(\cdot, 0)\rangle_{\lambda} = 0, \]
where $q^\ast$ is the polynomial
\[ q^\ast(z) := \overline{q(\overline{z})}. \] 
Since holomorphic polynomials (resp. anti-holomorphic polynomials) are dense in $\ahol$ (resp. in $\aah$), we obtain an orthogonal direct decomposition
\[ \aph = \ahol \bigoplus \Big (\aah \ominus \acons \Big ). \]
The reproducing kernels of $\aah$ and $\aph$ are therefore given by
\begin{align*}
K_{\text{ah}}^\lambda(w,z) &= \overline{K^\lambda(w,z)} = K^\lambda(z,w),\\
K_{\text{ph}}^\lambda(w,z) &= K^\lambda(w,z) + K_{\text{ah}}^\lambda(w,z) - 1.
\end{align*}
We define the normalized holomorphic reproducing kernel $k^\lambda(w,z)$ for $w,z \in \Omega$ by
\[ k^\lambda(w,z) = \frac{K^\lambda(w,z)}{\| K^\lambda(\cdot, z)\|_{\lambda}} \]
and analogously the normalized anti-holomorphic and pluriharmonic reproducing kernels $k_{\text{ah}}^{\lambda}(w,z)$ and $k_{\text{ph}}^\lambda(w,z)$. The orthogonal projections from $L_\lambda^2(\Omega)$ to $\ahol$, $\aah$, $\aph$ and $\acons$ are denoted by $\phol$, $\pah$, $\pph$ and $\pcons$. They fulfill the relation
\begin{align*}
\pph = \phol + \pah - \pcons.
\end{align*}
We define the holomorphic, anti-holomorphic and pluriharmonic Toeplitz operators with symbol $f \in L^\infty(\Omega)$ by
\begin{align*}
T_f^\lambda &= \phol M_f: \ahol \to \ahol,\\
T_f^{\text{ah}, \lambda} &= \pah M_f: \aah \to \aah,\\
T_f^{\text{ph}, \lambda} &= \pph M_f: \aph \to \aph.
\end{align*}
For each of those Toeplitz operators, the norm can be estimated from above by $\| f\|_\infty$.
The holomorphic and anti-holomorphic Hankel operators with symbol $f \in L^\infty(\Omega)$ are defined as
\begin{align*}
H_f^\lambda &:= (I - \phol)M_f: \ahol \to \ahol^\perp,\\
H_f^{\text{ah}, \lambda} &:= (I - \pah)M_f: \aah \to \aah^\perp.
\end{align*}
For $f \in L^\infty(\mathbb{C}^n)$, they are obviously bounded operators with norm less than $\| f\|_\infty$. Recall that Hankel and Toeplitz operators are related through the relation
\begin{equation}\label{hankelrel}
T_f^\lambda T_g^\lambda - T_{fg}^\lambda = -(H_{\overline{f}}^\lambda)^\ast H_g^\lambda,
\end{equation}
and the analogous relation holds for anti-holomorphic Toeplitz operators.

For a function $f \in L^\infty(\Omega)$ we define the holomorphic, anti-holomorphic and pluriharmonic Berezin transform of $f$ by
\begin{align*}
\mathcal B_\lambda(f)(z) &= \langle f k^\lambda(\cdot, z), k^\lambda(\cdot, z) \rangle_{\lambda}, \quad z \in \Omega,\\
\mathcal B_\lambda^{\text{ah}}(f)(z) &= \langle f k_{\text{ah}}^\lambda (\cdot, z), k_{\text{ah}}^\lambda (\cdot, z)\rangle_{\lambda}, \quad z \in \Omega,\\
\mathcal B_\lambda^{\text{ph}}(f)(z) &= \langle fk_{\text{ph}}^\lambda(\cdot, z), k_{\text{ph}}^\lambda(\cdot, z) \rangle_{\lambda}, \quad z \in \Omega,
\end{align*}
and the pluriharmonic Berezin transform of an operator $A \in \mathcal{L}(\aph)$ by
\begin{align*}
\mathcal B_\lambda^{\text{ph}} A (z) := \langle Ak_{\text{ph}}^\lambda(\cdot, z), k_{\text{ph}}^\lambda(\cdot, z) \rangle_{\lambda}, \quad z \in \Omega.
\end{align*}
In particular, $\mathcal B_\lambda^{\text{ph}} T_f^{\text{ph},\lambda} = \mathcal B_\lambda^{\text{ph}} f$.

We will also need to consider function spaces different from $L^\infty(\Omega)$. By $\UC(\Omega)$ we denote all uniformly continuous (not necessarily bounded) functions on $\Omega$ with respect to the appropriate metric $d$.
For $f \in L_{loc}^1(\Omega)$ define the average of $f$ over the measurable bounded set $E \subset \Omega$ with $|E|>0$ by
\[ f_E = \frac{1}{|E|} \int_E f dv, \]
where $|E|$ denotes the Lebesgue measure of the set. For $z \in \Omega,~ \rho > 0$ set
\[ A_2(f, z, \rho) := \frac{1}{|E(z,\rho)|} \int_{E(z,\rho)} |f - f_{E(z,\rho)}|^2 dv, \]
where $E(z, \rho)$ is the ball with respect to the appropriate metric:
\[ E(z,\rho) = \{ w \in \Omega;~ d(z,w) < \rho\}. \]
Define
\[ \VMO(\Omega) := \{ f: \Omega \to \mathbb{C};~ \lim_{\rho \to 0} A_2(f, z, \rho) = 0 ~\text{uniformly on } \Omega\}, \]
the functions of \textit{vanishing mean oscillation} in the interior, and further 
\[ \VMO_b(\Omega) := \VMO(\Omega) \cap L^\infty(\Omega). \]
For $f$ a bounded and continuous function on $\Omega$ define
\[ \operatorname{Osc}(f)(z) = \sup \{ |f(z) - f(w)|;~ d(z,w) \leq 1 \}, \quad z \in \Omega \]
and for $f \in L^\infty(\Omega)$ set
\[ \operatorname{MO}_\lambda(f,z) = \mathcal B_\lambda(|f|^2)(z) - |\mathcal B_\lambda(f)(z)|^2. \]
The spaces $\VO(\Omega)$ and $\VMO_\partial(\Omega)$ (which is not to be confused with $\VMO_b(\Omega)$) of functions with \textit{vanishing oscillation} and \textit{vanishing mean oscillation} at the boundary are then defined as
\[ \VO(\Omega) = \{ f \in C_b(\Omega); ~ \operatorname{Osc}(f)(z) \to 0,~ d(z,0) \to \infty\}, \]
where $C_b$ denotes the bounded continuous functions, and
\[ \VMO_\partial^\lambda(\Omega) = \{ f \in L^\infty(\Omega); ~ \operatorname{MO}_\lambda(f)(z) \to 0,~ d(z,0) \to \infty \}. \]
Then, denote $\VMO_\partial(\Omega) := \VMO_\partial^p(\Omega)$, where $p$ is the genus of the bounded symmetric domain $\Omega$, or $\VMO_\partial(\mathbb C^n) := \VMO_\partial^1(\mathbb C^n)$. We recall that $\VO(\Omega)$ is contained in $\BUC(\Omega)$, the bounded and uniformly continuous functions, and also in $\VMO_\partial(\Omega)$ \cite{Bauer2005, Bekolle_Berger_Coburn_Zhu1990}.

We will also consider Toeplitz and Hankel operators with symbols in $\UC(\Omega)$. There is a certain dense subspace $\mathcal{D}_\lambda$ of $L_\lambda^2$ (being constructed as a union of a scale of dense subspaces), which is known to be an invariant subspace of $P^\lambda$ and of $M_f$ for each $f \in \UC(\Omega)$ (cf. \cite{Bauer2009, Bauer_Hagger_Vasilevski2017} for details). Hence, it is also an invariant subspace of $P_{\text{ah}}^\lambda$ (since it acts as $P_{\text{ah}}^\lambda(f) =\overline{P^\lambda(\overline{f})}$ and $\mathcal{D}_\lambda$ is closed under complex conjugation) and of $P_{\mathbb{C}}^\lambda$ (since $P_{\mathbb{C}}^\lambda = P^\lambda P_{\text{ah}}^\lambda$). Therefore, Toeplitz operators (resp. anti-holomorphic or pluriharmonic Toeplitz operators) with symbol $f \in \UC(\Omega)$ are considered as densely defined operators
\begin{align*}
T_f^\lambda&: \mathcal{D}_\lambda \cap \ahol \to \mathcal{D}_\lambda\cap \ahol\\
T_f^{\text{ah},\lambda}&: \mathcal{D}_\lambda \cap \aah \to \mathcal{D}_\lambda \cap \aah\\
T_f^{\text{ph},\lambda}&: \mathcal{D}_\lambda \cap \aph \to \mathcal{D}_\lambda \cap \aph
\end{align*}
and can be composed with other Toeplitz operators defined on these dense subspaces.

Toeplitz operators with uniformly continuous symbols are in general unbounded. In contrast, Hankel operators with uniformly continuous symbols, being defined as for bounded symbols, are still bounded, yielding consequences for the semi-commutator of Toeplitz operators with uniformly continuous symbols (using relation (\ref{hankelrel})):
\begin{thm}[\cite{Bauer_Coburn_Hagger, Bauer_Hagger_Vasilevski2017}]\label{thm1}
Assume one of the following:
\begin{enumerate}
\item $\Omega$ a bounded symmetric domain or $\Omega = \mathbb{C}^n$ and $f \in \UC(\Omega)$,
\item $\Omega = \mathbb{B}^n$ or $\Omega = \mathbb{C}^n$ and $f \in \VMO_b(\Omega)$.
\end{enumerate}
Then, $H_f^\lambda$ is bounded with
\begin{align*}
\| H_f^{\lambda} \|_\lambda \to 0
\end{align*} 
as $\lambda \to \infty$. In particular,
\begin{align*}
\| T_f^\lambda T_g^\lambda - T_{fg}^\lambda\|_\lambda \to 0, \quad \lambda \to 0
\end{align*}
holds for any $g \in L^\infty(\Omega)$ or $g \in \UC(\Omega)$.
\end{thm}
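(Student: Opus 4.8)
The plan is to reduce the semi-commutator estimate to the Hankel bound and then sketch the proof of the latter, which is the real content and is carried out in detail in \cite{Bauer_Coburn_Hagger, Bauer_Hagger_Vasilevski2017}. For the reduction, relation (\ref{hankelrel}) gives $T_f^\lambda T_g^\lambda - T_{fg}^\lambda = -(H_{\overline f}^\lambda)^\ast H_g^\lambda$, hence $\| T_f^\lambda T_g^\lambda - T_{fg}^\lambda\|_\lambda \le \| H_{\overline f}^\lambda\|_\lambda\, \| H_g^\lambda\|_\lambda$. Both $\UC(\Omega)$ and $\VMO_b(\Omega)$ are closed under complex conjugation, so $\| H_{\overline f}^\lambda\|_\lambda \to 0$ by the first part of the statement, while $\| H_g^\lambda\|_\lambda \le \| g\|_\infty$ for $g \in L^\infty(\Omega)$ and $\| H_g^\lambda\|_\lambda$ stays bounded in $\lambda$ for $g \in \UC(\Omega)$ (again by the Hankel bound just established); in either case the product tends to $0$. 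It thus remains to prove $\| H_f^\lambda\|_\lambda \to 0$.

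For that, the crucial point is that constants belong to $\ahol$, so for any $g \in \ahol$ and any $c \in \mathbb{C}$ one has $\| H_f^\lambda g\|_\lambda = \| (I - \phol)\big((f-c)g\big)\|_\lambda \le \| (f-c)g\|_\lambda$, using that $cg \in \ahol$. Applying this with $g = k^\lambda(\cdot, z)$, so that $|g|^2\, dv_\lambda$ is a probability measure concentrating at $z$ as $\lambda \to \infty$, and with $c = f(z)$, yields
\[ \| H_f^\lambda k^\lambda(\cdot, z)\|_\lambda^2 \le \int_\Omega |f(u) - f(z)|^2\, |k^\lambda(u,z)|^2\, dv_\lambda(u). \]
Given $\varepsilon > 0$ I would fix $R = R(\varepsilon)$ with $|f(w) - f(u)| < \varepsilon$ whenever $d(w,u) < R$ (uniform continuity; in case (2), replace this by the mean-oscillation bound coming from $f \in \VMO(\Omega)$) and split the integral at $d(u,z) = R$. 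The near part is $\le \varepsilon^2$; the far part tends to $0$ by the concentration of $k^\lambda$ on the diagonal, using the at-most-linear growth of $f$ under the Gaussian when $\Omega = \mathbb{C}^n$, its taming by the Jordan triple determinant for a bounded symmetric domain, and simply $\| f\|_\infty$ in case (2). This gives $\sup_{z \in \Omega} \| H_f^\lambda k^\lambda(\cdot, z)\|_\lambda \to 0$.

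The remaining task is to upgrade this uniform kernel bound to a bound on the full operator norm $\| H_f^\lambda\|_\lambda$. I would do this using that $H_f^\lambda$ is localized at the scale of the Bergman metric --- e.g. decomposing a $\UC$-symbol into slowly varying pieces adapted to a covering of $\Omega$ by unit Bergman balls, and dominating the resulting integral operators against $k^\lambda$-kernels via Schur tests and the standard Forelli--Rudin estimates for $dv_\lambda$. The uniformity in the base point that this requires is supplied by covariance: the unitaries $U_\varphi^\lambda$ implementing the automorphisms $\varphi$ of $\Omega$ (the Weyl operators when $\Omega = \mathbb{C}^n$) satisfy $U_\varphi^\lambda \phol (U_\varphi^\lambda)^\ast = \phol$ and $U_\varphi^\lambda H_f^\lambda (U_\varphi^\lambda)^\ast = H_{f \circ \varphi}^\lambda$, and carry $k^\lambda(\cdot, 0)$ to $k^\lambda(\cdot, \varphi(0))$ up to a unimodular factor, so that an estimate at an arbitrary base point is transported to the origin, where it becomes an estimate in terms of the (mean) oscillation of $f$ on a fixed-radius Bergman ball. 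For $\Omega = \mathbb{B}^n$ in case (2) one additionally invokes the known $L^2$-mapping properties of the Bergman projection to close the argument.

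The main obstacle is precisely this last step --- passing from Berezin-type (kernel) estimates to operator-norm estimates uniformly up to the boundary, where the Bergman geometry degenerates, compounded by the possible unboundedness of $\UC$ symbols. It is also the reason case (2) is stated only for $\mathbb{B}^n$ and $\mathbb{C}^n$: for a general bounded symmetric domain the projection estimates needed for merely $\VMO_b$ (rather than $\UC$) symbols are not presently available. The remaining ingredients --- the pointwise Hankel formula, the best-constant trick, the diagonal concentration of $k^\lambda$, and the Schur/Forelli--Rudin estimates --- are routine.
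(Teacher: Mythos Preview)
The paper does not prove this theorem at all; it is quoted verbatim from \cite{Bauer_Coburn_Hagger, Bauer_Hagger_Vasilevski2017} and used as input for the pluriharmonic results. The only argument the paper supplies is the implicit one for the ``In particular'' clause, namely the reduction via relation (\ref{hankelrel}), and your first paragraph reproduces that reduction exactly (including the observation that $\UC$ and $\VMO_b$ are conjugation-invariant, so $\| H_{\overline f}^\lambda\|_\lambda \to 0$ follows from the main assertion).

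Your sketch of the Hankel bound itself is therefore not comparable to anything in the present paper, but it is a fair outline of the strategy in the cited references: the best-constant trick $H_f^\lambda g = (I-\phol)(f-c)g$, the kernel estimate $\| H_f^\lambda k^\lambda(\cdot,z)\|_\lambda^2 \le \mathcal B_\lambda(|f-f(z)|^2)(z)$, and covariance under automorphisms are indeed the core ingredients. You are also right that the genuine difficulty lies in upgrading the pointwise kernel bound to an operator-norm bound uniformly in the base point, and that this is precisely why case (2) is restricted to $\mathbb B^n$ and $\mathbb C^n$. That step is not ``routine'' and your description of it remains schematic, but since the paper defers entirely to the literature here, there is no discrepancy to report.
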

As a direct consequence of the above result one obtains the following:
\begin{cor}\label{cor2}
Under the conditions of Theorem \ref{thm1} it holds
\begin{align*}
\| H_f^{\emph{\text{ah}}, \lambda}\|_\lambda \to 0, \quad \lambda \to \infty.
\end{align*}
In particular,
\[ \| T_f^{\emph{\text{ah}}, \lambda} T_g^{\emph{\text{ah}},\lambda} - T_{fg}^{\emph{\text{ah}},\lambda}\|_\lambda \to 0, \quad \lambda \to 0 \]
holds for any $g \in L^\infty(\Omega)$ or $g \in \UC(\Omega)$ as well.
\end{cor}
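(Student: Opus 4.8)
The plan is to deduce everything from Theorem~\ref{thm1} by conjugating with the anti-unitary map $C\colon L_\lambda^2(\Omega)\to L_\lambda^2(\Omega)$, $Cg=\overline g$. Since complex conjugation maps $\ahol$ bijectively onto $\aah$ and $C$ preserves orthogonality, $C$ also maps $\ahol^\perp$ onto $\aah^\perp$, and the relation $\pah(h)=\overline{\phol(\overline h)}$ noted above rewrites as $\overline{\pah(h)}=\phol(\overline h)$. The crux of the argument is then the intertwining identity $C H_f^{\text{ah},\lambda}=H_{\overline f}^\lambda C$ on $\aah$ (resp. on $\mathcal D_\lambda\cap\aah$ in the $\UC$ case): for $g$ in the appropriate domain,
\[
C H_f^{\text{ah},\lambda} g=\overline{fg}-\overline{\pah(fg)}=\overline f\,\overline g-\phol(\overline f\,\overline g)=H_{\overline f}^\lambda(\overline g)=H_{\overline f}^\lambda C g .
\]
As $C$ is isometric, this yields $\|H_f^{\text{ah},\lambda}\|_\lambda=\|H_{\overline f}^\lambda\|_\lambda$.

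Next I would observe that the symbol classes appearing in Theorem~\ref{thm1} are closed under complex conjugation: if $f\in\UC(\Omega)$ then $\overline f\in\UC(\Omega)$, and if $f\in\VMO_b(\Omega)$ then $\overline f\in\VMO_b(\Omega)$, since $L^\infty$ is conjugation-invariant and $A_2(\overline f,z,\rho)=A_2(f,z,\rho)$ because $|\overline f-\overline{f_E}|=|f-f_E|$. Hence Theorem~\ref{thm1} applies to $\overline f$ and gives $\|H_{\overline f}^\lambda\|_\lambda\to 0$, whence $\|H_f^{\text{ah},\lambda}\|_\lambda\to 0$.

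For the semi-commutator statement I would invoke the anti-holomorphic analogue of the Hankel--Toeplitz relation \eqref{hankelrel}, namely $T_f^{\text{ah},\lambda}T_g^{\text{ah},\lambda}-T_{fg}^{\text{ah},\lambda}=-(H_{\overline f}^{\text{ah},\lambda})^\ast H_g^{\text{ah},\lambda}$, so that
\[
\|T_f^{\text{ah},\lambda}T_g^{\text{ah},\lambda}-T_{fg}^{\text{ah},\lambda}\|_\lambda\le\|H_{\overline f}^{\text{ah},\lambda}\|_\lambda\,\|H_g^{\text{ah},\lambda}\|_\lambda .
\]
Since $\overline f$ again lies in the relevant class, the first part gives $\|H_{\overline f}^{\text{ah},\lambda}\|_\lambda\to 0$, while the second factor stays bounded: $\|H_g^{\text{ah},\lambda}\|_\lambda\le\|g\|_\infty$ for $g\in L^\infty(\Omega)$, and for $g\in\UC(\Omega)$ the anti-holomorphic Hankel operator is bounded by the same conjugation identity together with the boundedness of $H_{\overline g}^\lambda$. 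The product therefore tends to $0$.

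I do not anticipate a real obstacle; the only points demanding care are tracking the conjugations in the intertwining identity (in particular the correct form $\overline{\pah(h)}=\phol(\overline h)$) and, in the $\UC$ case, verifying that every step remains inside the common invariant dense domain $\mathcal D_\lambda$, which is legitimate precisely because $\mathcal D_\lambda$ is closed under complex conjugation, so $C$ restricts to it.
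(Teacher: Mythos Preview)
Your proposal is correct and follows essentially the same approach as the paper: both use the conjugation map $g\mapsto\overline g$ to intertwine $H_f^{\text{ah},\lambda}$ with $H_{\overline f}^\lambda$ and then invoke Theorem~\ref{thm1} for $\overline f$. You supply more detail (closure of the symbol classes under conjugation, the explicit Hankel--Toeplitz relation for the semi-commutator, and the domain issues in the $\UC$ case), but the core idea is identical.
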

\begin{proof}
The operator $U: L_\lambda^2(\Omega) \to L_\lambda^2(\Omega),~ f \mapsto \overline{f}$ is an isometric isomorphism of $L_\lambda^2(\Omega)$ and also an isomorphism between $\ahol$ and $\aah$. It holds
\begin{align*}
U H_f^{\text{ah},\lambda} U = H_{\overline{f}}^\lambda
\end{align*}
and hence
\begin{align*}
\| H_f^{\text{ah},\lambda}\|_\lambda = \| U H_f^{\text{ah},\lambda} U\|_\lambda = \| H_{\overline{f}}^\lambda\|_\lambda \to 0
\end{align*}
as $\lambda \to 0$.
\end{proof}
\section{Deformation quantization}\label{sec:quant}
\subsection{The first quantization property}
Since we can decompose $\aph = \ahol \bigoplus \Big ( \aah \ominus \acons \Big )$, the matrix representation of the pluriharmonic Toeplitz operator with respect to this decomposition is
\begin{align}\label{decom}
T_f^{\text{ph}, \lambda} = \begin{pmatrix}[c|c]
T_f^\lambda & A_f^\lambda \\ \hline
B_f^\lambda & T_f^{\text{ah} \ominus \mathbb{C}, \lambda}
\end{pmatrix},
\end{align}
where
\begin{align*}
A_f^\lambda &= \phol M_f: \aah \ominus \acons \to \ahol,\\
B_f^\lambda &= (\pah - \pcons)M_f: \ahol \to \aah \ominus \acons,\\
T_f^{\text{ah} \ominus \mathbb{C}, \lambda} &= (\pah - \pcons) M_f: \aah \ominus \acons \to \aah \ominus \acons.
\end{align*}
\begin{prop}[First quantization property]\label{qp1}
For all $f \in L^\infty(\Omega)$  it holds
\begin{align*}
\| T_f^{\emph{\text{ph}},\lambda}\|_\lambda \to \| f\|_\infty
\end{align*}
as $\lambda \to \infty$.
\end{prop}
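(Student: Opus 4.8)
The plan is to sandwich $\| T_f^{\text{ph},\lambda}\|_\lambda$ between $\| f\|_\infty$ from above (for every admissible $\lambda$) and $\| f\|_\infty$ from below in the limit, the latter being obtained by recognizing the holomorphic Toeplitz operator $T_f^\lambda$ as a compression of the pluriharmonic one and then quoting the already-known first quantization property in the holomorphic setting. The upper bound requires no work: since $\pph$ is an orthogonal projection and $M_f$ has norm $\| f\|_\infty$ on $L_\lambda^2(\Omega)$, we have $\| T_f^{\text{ph},\lambda}\|_\lambda = \| \pph M_f|_{\aph}\|_\lambda \le \| f\|_\infty$, as already recorded after the definition of the Toeplitz operators; hence $\limsup_{\lambda\to\infty}\| T_f^{\text{ph},\lambda}\|_\lambda \le \| f\|_\infty$.

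For the lower bound I would use the block decomposition (\ref{decom}) with respect to the orthogonal splitting $\aph = \ahol \oplus (\aah \ominus \acons)$. Since $\ahol$ is an orthogonal summand of $\aph$ and the $(1,1)$-entry of (\ref{decom}) is precisely $T_f^\lambda$, compressing $T_f^{\text{ph},\lambda}$ to $\ahol$ (i.e.\ restricting to $\ahol$ and projecting back onto $\ahol$) reproduces $T_f^\lambda$. A compression never increases the operator norm, so
\begin{align*}
\| T_f^{\text{ph},\lambda}\|_\lambda \ge \| T_f^\lambda\|_\lambda
\end{align*}
for every $\lambda$. Invoking the first quantization property in the holomorphic case, namely $\| T_f^\lambda\|_\lambda \to \| f\|_\infty$ as $\lambda \to \infty$ for $f \in L^\infty(\Omega)$ — classical for $\Omega = \mathbb{C}^n$ (cf.\ \cite{Coburn1992, Bauer_Coburn_Hagger}), likewise known for $\Omega = \mathbb{B}^n$, and the content of Appendix \ref{appendixA} for a general bounded symmetric domain — we obtain $\liminf_{\lambda\to\infty}\| T_f^{\text{ph},\lambda}\|_\lambda \ge \| f\|_\infty$. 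Together with the upper bound this gives $\lim_{\lambda\to\infty}\| T_f^{\text{ph},\lambda}\|_\lambda = \| f\|_\infty$.

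Thus the only real content is the holomorphic input, and there is no new obstacle here beyond making sure that result is available for all three families of domains at the stated generality (with $f$ merely bounded); this is exactly why the appendix is included. I should note that a self-contained alternative would be to estimate $\| T_f^{\text{ph},\lambda}\|_\lambda$ from below by $\sup_{z \in \Omega}|\mathcal B_\lambda^{\text{ph}} f(z)| = \sup_{z}|\mathcal B_\lambda^{\text{ph}} T_f^{\text{ph},\lambda}(z)|$ and to show this supremum tends to $\| f\|_\infty$. However, since $k_{\text{ph}}^\lambda(\cdot,z)$ is, up to normalization, the sum of the holomorphic and anti-holomorphic kernels $K^\lambda(\cdot,z)$ and $\overline{K^\lambda(\cdot,z)}$ (minus the constant $1$), this forces one to control holomorphic--anti-holomorphic cross terms in $\langle f k_{\text{ph}}^\lambda(\cdot,z), k_{\text{ph}}^\lambda(\cdot,z)\rangle_\lambda$ in addition to the usual concentration estimates for $|k^\lambda(\cdot,z)|^2$ at Lebesgue points of $f$. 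This is strictly more delicate than the compression argument, so I would present the reduction to the holomorphic case.
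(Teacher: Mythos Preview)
Your argument is correct and matches the paper's proof essentially verbatim: sandwich $\| T_f^{\text{ph},\lambda}\|_\lambda$ between $\| f\|_\infty$ and $\| T_f^\lambda\|_\lambda$ via the block decomposition (\ref{decom}), then invoke the holomorphic first quantization property from \cite{Bauer_Coburn_Hagger} for $\mathbb{C}^n$ and Appendix \ref{appendixA} for bounded symmetric domains. Your additional remark about the Berezin-transform alternative and its cross-term complications is also apt and anticipates the content of Lemma \ref{prop4}.
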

\begin{proof}
By the matrix representation above it holds $\| f\|_\infty \geq \| T_f^{\text{ph},\lambda}\|_\lambda \geq \| T_f^\lambda\|_\lambda$. In \cite[Theorem 6.2]{Bauer_Coburn_Hagger} it was proven that $\| T_f^\lambda\|_\lambda \to \| f\|_\infty$ as $\lambda \to \infty$ holds for each $f \in L^\infty(\mathbb C^n)$. We provide the analogous result for $\Omega$ a bounded symmetric domain in Appendix \ref{appendixA}. This completes the proof.
\end{proof}
\begin{rmk}
Let $S_\lambda$ be any family of closed subspaces of $L_\lambda^2(\Omega)$ such that for each sufficiently large $\lambda \in \mathbb R$ it is $\ahol \subseteq S_\lambda \subseteq L_\lambda^2(\Omega)$, e.g. let $S_\lambda$ the space of harmonic functions in $L_\lambda^2$. Then, it follows by the same reasoning that
\[ \| T_f^{S_\lambda}\|_{\lambda} \to \| f\|_\infty, \quad \lambda \to \infty \]
for each $f \in L^\infty(\Omega)$. Here, $T_f^{S_\lambda}$ denotes the Toeplitz operator on $S_\lambda$ with symbol $f$, i.e.
\[ T_f^{S_\lambda}: S_\lambda \to S_\lambda, \quad T_f^{S_\lambda} = P_{S_\lambda} M_f. \]
\end{rmk}
We will prove a related result on the pluriharmonic Berezin transform.
\begin{lmm}\label{prop4}
Let $f \in L^\infty(\Omega)$ be such that $\| H_f^\lambda\|_\lambda, \| H_f^{\emph{\text{ah}}, \lambda}\|_\lambda \to 0$ as $\lambda \to \infty$. Further, let $z \in \Omega$ be such that
\[ \mathcal B_\lambda(f)(z) \to f(z) \]
and
\[ \mathcal B_\lambda^{\emph{\text{ah}}}(f)(z) \to f(z) \]
as $\lambda \to \infty$. Then, it also holds
\begin{align*}
\mathcal B_\lambda^{\emph{\text{ph}}} (f)(z) \to f(z)
\end{align*}
as $\lambda \to \infty$.
\end{lmm}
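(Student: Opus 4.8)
The plan is to exploit the orthogonal decomposition $\aph = \ahol \oplus (\aah \ominus \acons)$ recorded in the preliminaries in order to write the normalised pluriharmonic reproducing kernel $k_{\text{ph}}^\lambda(\cdot,z)$ explicitly as a combination of $k^\lambda(\cdot,z)$ and a unit vector of $\aah\ominus\acons$ which is close to $k_{\text{ah}}^\lambda(\cdot,z)$, then to expand the quadratic form $\mathcal B_\lambda^{\text{ph}}(f)(z)=\langle f k_{\text{ph}}^\lambda(\cdot,z),k_{\text{ph}}^\lambda(\cdot,z)\rangle_\lambda$ into four pieces and estimate each one. First I would dispose of $z=0$, where $K^\lambda(\cdot,0)=K_{\text{ah}}^\lambda(\cdot,0)=1$ so that $k_{\text{ph}}^\lambda(\cdot,0)=1=k^\lambda(\cdot,0)$ and hence $\mathcal B_\lambda^{\text{ph}}(f)(0)=\mathcal B_\lambda(f)(0)\to f(0)$ by hypothesis.

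So assume $z\neq0$ and set $\kappa=\kappa(\lambda):=K^\lambda(z,z)=\|K^\lambda(\cdot,z)\|_\lambda^2$. By Cauchy--Schwarz and the reproducing property $\kappa\geq|\langle K^\lambda(\cdot,z),1\rangle_\lambda|^2=1$, with equality only at $z=0$ (it would force $K^\lambda(\cdot,z)$ constant); and since in each of our cases $K^\lambda(z,z)$ is a positive power of $K^p(z,z)$ (resp.\ of $e^{|z|^2}$ on $\mathbb C^n$), which exceeds $1$ for $z\neq0$, we get $\kappa\to\infty$ as $\lambda\to\infty$. Using $\langle K_{\text{ah}}^\lambda(\cdot,z),1\rangle_\lambda=1$ and $K_{\text{ah}}^\lambda(z,z)=\kappa$, the function $K_{\text{ah}}^\lambda(\cdot,z)-1$ lies in $\aah\ominus\acons$ with squared norm $\kappa-1$. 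Putting $u_\lambda:=k^\lambda(\cdot,z)\in\ahol$ and $w_\lambda:=(K_{\text{ah}}^\lambda(\cdot,z)-1)/\sqrt{\kappa-1}\in\aah\ominus\acons$, these are orthogonal unit vectors, and from $K_{\text{ph}}^\lambda(\cdot,z)=K^\lambda(\cdot,z)+(K_{\text{ah}}^\lambda(\cdot,z)-1)$ with $\|K_{\text{ph}}^\lambda(\cdot,z)\|_\lambda^2=2\kappa-1$ one gets
\[ k_{\text{ph}}^\lambda(\cdot,z)=\alpha_\lambda u_\lambda+\beta_\lambda w_\lambda,\qquad \alpha_\lambda:=\sqrt{\tfrac{\kappa}{2\kappa-1}},\quad \beta_\lambda:=\sqrt{\tfrac{\kappa-1}{2\kappa-1}}, \]
so that $\alpha_\lambda^2,\beta_\lambda^2,\alpha_\lambda\beta_\lambda\to\tfrac12$. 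Moreover $k_{\text{ah}}^\lambda(\cdot,z)=\tfrac{\sqrt{\kappa-1}}{\sqrt\kappa}w_\lambda+\tfrac1{\sqrt\kappa}1$ with $w_\lambda\perp1$, whence $\|w_\lambda-k_{\text{ah}}^\lambda(\cdot,z)\|_\lambda\to0$.

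Expanding the quadratic form gives
\[ \mathcal B_\lambda^{\text{ph}}(f)(z)=\alpha_\lambda^2\langle fu_\lambda,u_\lambda\rangle_\lambda+\beta_\lambda^2\langle fw_\lambda,w_\lambda\rangle_\lambda+\alpha_\lambda\beta_\lambda\bigl(\langle fu_\lambda,w_\lambda\rangle_\lambda+\langle fw_\lambda,u_\lambda\rangle_\lambda\bigr), \]
and I would treat the four terms separately. The first is exactly $\alpha_\lambda^2\mathcal B_\lambda(f)(z)\to\tfrac12 f(z)$. For the second, the elementary bound $|\langle fv,v\rangle_\lambda-\langle fv',v'\rangle_\lambda|\leq 2\|f\|_\infty\|v-v'\|_\lambda$ for unit vectors, together with $\|w_\lambda-k_{\text{ah}}^\lambda(\cdot,z)\|_\lambda\to0$, yields $\langle fw_\lambda,w_\lambda\rangle_\lambda=\mathcal B_\lambda^{\text{ah}}(f)(z)+o(1)\to f(z)$, so $\beta_\lambda^2\langle fw_\lambda,w_\lambda\rangle_\lambda\to\tfrac12 f(z)$. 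The two cross terms are the only place the Hankel hypotheses enter: since $w_\lambda\perp\ahol$ while $P^\lambda(fu_\lambda)\in\ahol$, we have $\langle fu_\lambda,w_\lambda\rangle_\lambda=\langle(I-P^\lambda)(fu_\lambda),w_\lambda\rangle_\lambda=\langle H_f^\lambda u_\lambda,w_\lambda\rangle_\lambda$, bounded by $\|H_f^\lambda\|_\lambda\to0$; and, rewriting the integral via complex conjugation, $\langle fw_\lambda,u_\lambda\rangle_\lambda=\langle f\overline{u_\lambda},\overline{w_\lambda}\rangle_\lambda$ where $\overline{u_\lambda}=k_{\text{ah}}^\lambda(\cdot,z)\in\aah$ is a unit vector and $\overline{w_\lambda}=(K^\lambda(\cdot,z)-1)/\sqrt{\kappa-1}\in\ahol\ominus\acons\perp\aah$, so this equals $\langle H_f^{\text{ah},\lambda}\overline{u_\lambda},\overline{w_\lambda}\rangle_\lambda$, bounded by $\|H_f^{\text{ah},\lambda}\|_\lambda\to0$. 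Collecting the four limits gives $\mathcal B_\lambda^{\text{ph}}(f)(z)\to\tfrac12 f(z)+\tfrac12 f(z)=f(z)$.

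The main obstacle I expect is packaging the two cross terms as Hankel pairings: the first is immediate once one uses that $\aah\ominus\acons\perp\ahol$ in $L_\lambda^2(\Omega)$ (part of the given decomposition), but the second is not directly of the form $\langle H_f^\lambda\cdot,\cdot\rangle_\lambda$ or $\langle H_f^{\text{ah},\lambda}\cdot,\cdot\rangle_\lambda$ and must first be recast via the identity $\langle fg,h\rangle_\lambda=\langle f\overline h,\overline g\rangle_\lambda$, after which one invokes $\ahol\ominus\acons\perp\aah$. Everything else is routine: the reproducing-kernel identities $\langle K_{\text{ah}}^\lambda(\cdot,z),1\rangle_\lambda=1$ and the norm computations, the estimate $\|w_\lambda-k_{\text{ah}}^\lambda(\cdot,z)\|_\lambda\to0$, the continuity bound for Berezin-type forms, and the fact $\kappa\to\infty$.
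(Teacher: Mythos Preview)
Your proof is correct and follows essentially the same approach as the paper: decompose $K_{\text{ph}}^\lambda(\cdot,z)$ into its holomorphic and anti-holomorphic-minus-constant parts, expand the quadratic form, identify the diagonal terms with $\mathcal B_\lambda(f)(z)$ and $\mathcal B_\lambda^{\text{ah}}(f)(z)$, and control the cross terms via $H_f^\lambda$ and $H_f^{\text{ah},\lambda}$. Your packaging into orthonormal vectors $u_\lambda,w_\lambda$ yields four terms rather than the paper's nine, and you handle the second cross term by the conjugation identity $\langle fw_\lambda,u_\lambda\rangle_\lambda=\langle f\overline{u_\lambda},\overline{w_\lambda}\rangle_\lambda$ whereas the paper regroups asymmetrically as $\langle fK_{\text{ah}}^\lambda(\cdot,z),K^\lambda(\cdot,z)-1\rangle_\lambda$; both routes land on the same Hankel estimate.
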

\begin{proof}
Observe that the result follows trivially for $z = 0$ as
\[ \mathcal B_\lambda^{\text{ph}}(f)(0) = \mathcal B_\lambda(f)(0). \]
Hence, we may assume $z \neq 0$. It is
\begin{align*}
\mathcal B_\lambda^{\text{ph}}(f)(z) = &\frac{1}{\| K^\lambda(\cdot, z) + K_{\text{ah}}^\lambda(\cdot, z) - 1\|_\lambda^2} \\
 &\cdot \langle f(K^\lambda(\cdot, z) + K_{\text{ah}}^\lambda(\cdot, z) - 1), (K^\lambda(\cdot, z) + K_{\text{ah}}^\lambda(\cdot, z) - 1) \rangle_\lambda.
\end{align*}
First, recall that $\| K^\lambda (\cdot, z)\|_\lambda^2 \to \infty$ as $\lambda \to \infty$ for each $z \neq 0$. Observe that, by orthogonality,
\begin{align*}
\frac{1}{\| K^\lambda(\cdot, z) + K_{\text{ah}}^\lambda(\cdot, z) - 1\|_\lambda^2} &= \frac{1}{\| K^\lambda(\cdot, z)\|_\lambda^2 + \| K_{\text{ah}}^\lambda (\cdot, z) - 1\|_\lambda^2}\\
&= \frac{1}{\| K^\lambda(\cdot, z)\|_\lambda^2} \cdot \frac{1}{1 + \frac{\| K_{\text{ah}}^\lambda(\cdot, z) - 1\|_\lambda^2}{\| K^\lambda(\cdot, z)\|_\lambda^2}}.
\end{align*}
It holds
\begin{align*}
\| K_{\text{ah}}(\cdot, z) - 1\|_\lambda^2 = \| K^\lambda(\cdot, z)\|_\lambda^2 - 1
\end{align*}
and therefore
\begin{align*}
\frac{1}{1 + \frac{\| K_{\text{ah}}^\lambda(\cdot, z) - 1\|_\lambda^2}{\| K^\lambda(\cdot,z)\|_\lambda^2}} \to \frac{1}{2}, \quad \lambda \to \infty
\end{align*}
for each $z \neq 0$. We hence need to check the limit only for
\begin{align*}
\frac{1}{2\| K^\lambda(\cdot, z)\|_\lambda^2}\langle f(K^\lambda(\cdot, z) + K_{\text{ah}}^\lambda(\cdot, z) - 1), (K^\lambda(\cdot, z) + K_{\text{ah}}^\lambda(\cdot, z) - 1) \rangle_\lambda.
\end{align*}
By sesquilinearity, we can split this expression into several simpler terms, which we investigate seperately. We first consider those terms which actually contribute to the limit:
\begin{align*}
\frac{1}{2\| K^\lambda(\cdot, z)\|_\lambda^2} \langle fK^\lambda(\cdot, z), K^\lambda(\cdot, z)\rangle_\lambda &= \frac{1}{2}B_\lambda(f)(z) \to \frac{1}{2} f(z) , \quad \lambda \to \infty,\\
\frac{1}{2\| K^\lambda(\cdot, z)\|_\lambda^2} \langle fK_{\text{ah}}^\lambda(\cdot, z), K_{\text{ah}}^\lambda(\cdot, z)\rangle_\lambda &= \frac{1}{2\| K_{\text{ah}}^\lambda(\cdot, z)\|_\lambda^2} \langle fK_{\text{ah}}^\lambda(\cdot, z), K_{\text{ah}}^\lambda(\cdot, z)\rangle_\lambda \\
 &= \frac{1}{2}B_\lambda^{\text{ah}}(f)(z) \to \frac{1}{2} f(z) , \quad \lambda \to \infty.
\end{align*}
Further, since the measure $v_\lambda$ is a probability measure,
\begin{align*}
\frac{1}{\| K^\lambda(\cdot,z)\|_\lambda^2} |\langle f, 1\rangle_\lambda| \leq \frac{1}{\| K^\lambda(\cdot,z)\|_\lambda^2} \| f\|_\infty \to 0, \quad \lambda \to \infty.
\end{align*}
Next, we consider
\begin{align*}
\frac{1}{\| K^\lambda(\cdot,z)\|_\lambda^2}& |\langle fK^\lambda(\cdot,z), K_{\text{ah}}^\lambda(\cdot,z) - 1\rangle_\lambda|\\
&= \frac{1}{\| K^\lambda(\cdot,z)\|_\lambda^2} |\langle (I - P^\lambda)(fK^\lambda(\cdot, z)), K_{\text{ah}}^\lambda(\cdot, z) - 1\rangle_\lambda |\\
&= \frac{1}{\| K^\lambda(\cdot,z)\|_\lambda^2} |\langle H_f^\lambda(K^\lambda(\cdot, z)), K_{\text{ah}}^\lambda(\cdot, z) - 1\rangle_\lambda|\\
&\leq \| H_f^\lambda\|_\lambda \frac{\| K_{\text{ah}}^\lambda(\cdot, z) - 1\|_\lambda}{\| K^\lambda(\cdot, z)\|_\lambda}.
\end{align*}
As already observed above, $\frac{\|K_{\text{ah}}^\lambda(\cdot,z) - 1\|}{\| K^\lambda(\cdot,z)\|}$ converges to 1 as $\lambda \to \infty$. By assumption it holds $\| H_f^\lambda\| \to 0$ as $\lambda \to \infty$ , hence the initial expression converges to 0. The reasoning for
\begin{align*}
\frac{1}{\| K^\lambda(\cdot,z)\|_\lambda^2} \langle fK_{\text{ah}}^\lambda(\cdot,z), K^\lambda(\cdot,z) - 1\rangle_\lambda
\end{align*}
is the same. Finally,
\begin{align*}
\frac{1}{\| K^\lambda(\cdot,z)\|_\lambda^2} |\langle f, K^\lambda(\cdot,z)\rangle_\lambda| \leq \frac{\| f\|_\infty}{\| K^\lambda(\cdot,z)\|_\lambda}
\end{align*}
by the Cauchy-Schwarz inequality, which converges to 0 as $\lambda \to \infty$, and
\begin{align*}
\frac{1}{\| K^\lambda(\cdot,z)\|_\lambda^2} \langle f, K_{\text{ah}}^\lambda(\cdot,z)\rangle_\lambda
\end{align*}
converges in the same way to 0. Putting all these pieces together yields the result.
\end{proof}
\begin{prop}\label{cor5}
For $f \in C_b(\Omega)$ it holds
\[ \mathcal B_\lambda^{\emph{\text{ph}}}(f)(z) \to f(z), \quad \lambda \to \infty \]
for all $z \in \Omega$.
\end{prop}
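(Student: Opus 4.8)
The plan is to reduce the statement to a concentration property of the normalized pluriharmonic reproducing kernel at $z$ and to deduce this from the classical concentration property of the holomorphic kernel. Since $\|k_{\text{ph}}^\lambda(\cdot,z)\|_\lambda = 1$ for every $\lambda$, one may write
\[
|\mathcal B_\lambda^{\text{ph}}(f)(z) - f(z)| = \Big| \int_\Omega \big(f(w) - f(z)\big)\, |k_{\text{ph}}^\lambda(w,z)|^2\, dv_\lambda(w) \Big|.
\]
Given $\varepsilon > 0$, continuity of $f$ at $z$ provides $\delta > 0$ with $|f(w) - f(z)| < \varepsilon$ for $w \in E(z,\delta)$; then the part of this integral over $E(z,\delta)$ is at most $\varepsilon$, while the part over $\Omega \setminus E(z,\delta)$ is at most $2\|f\|_\infty \int_{\Omega \setminus E(z,\delta)} |k_{\text{ph}}^\lambda(w,z)|^2\, dv_\lambda(w)$. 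Hence it suffices to show that, for each fixed $z \in \Omega$ and $\delta > 0$,
\[
\int_{\Omega \setminus E(z,\delta)} |k_{\text{ph}}^\lambda(w,z)|^2\, dv_\lambda(w) \longrightarrow 0, \qquad \lambda \to \infty.
\]

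For $z = 0$ this is immediate, since $K^\lambda(\cdot,0) \equiv 1$ gives $k_{\text{ph}}^\lambda(\cdot,0) \equiv 1$, so the integral equals $v_\lambda(\Omega \setminus E(0,\delta)) \to 0$. For $z \ne 0$ I would use the two facts already at hand: orthogonality of $K^\lambda(\cdot,z)$ and $K_{\text{ah}}^\lambda(\cdot,z) - 1$ gives $\|K_{\text{ph}}^\lambda(\cdot,z)\|_\lambda^2 = 2\|K^\lambda(\cdot,z)\|_\lambda^2 - 1$, and pointwise $|K_{\text{ph}}^\lambda(w,z)| = |2\operatorname{Re} K^\lambda(w,z) - 1| \le 2|K^\lambda(w,z)| + 1$. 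Setting $\varepsilon_\lambda := \int_{\Omega \setminus E(z,\delta)} |k^\lambda(w,z)|^2\, dv_\lambda(w)$, which tends to $0$ by the concentration of the normalized holomorphic reproducing kernels (well known for the domains considered here, and already invoked implicitly for holomorphic Berezin transforms), one has $\int_{\Omega \setminus E(z,\delta)} |K^\lambda(w,z)|^2\, dv_\lambda = \varepsilon_\lambda \|K^\lambda(\cdot,z)\|_\lambda^2$ and, by Cauchy--Schwarz (recall $v_\lambda$ is a probability measure), $\int_{\Omega \setminus E(z,\delta)} |K^\lambda(w,z)|\, dv_\lambda \le \varepsilon_\lambda^{1/2}\|K^\lambda(\cdot,z)\|_\lambda$. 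Expanding $(2|K^\lambda| + 1)^2$ and dividing by $\|K_{\text{ph}}^\lambda(\cdot,z)\|_\lambda^2$ then gives
\[
\int_{\Omega \setminus E(z,\delta)} |k_{\text{ph}}^\lambda(w,z)|^2\, dv_\lambda \le \frac{4\varepsilon_\lambda \|K^\lambda(\cdot,z)\|_\lambda^2 + 4\varepsilon_\lambda^{1/2}\|K^\lambda(\cdot,z)\|_\lambda + 1}{2\|K^\lambda(\cdot,z)\|_\lambda^2 - 1}.
\]
Since $\|K^\lambda(\cdot,z)\|_\lambda^2 \to \infty$ and $\varepsilon_\lambda \to 0$, each of the three summands on the right tends to $0$; letting finally $\varepsilon \to 0$ yields the claim.

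The computation is routine; the one conceptual point is that Lemma \ref{prop4} cannot be applied directly to a general $f \in C_b(\Omega)$, because for such symbols the Hankel norms $\|H_f^\lambda\|_\lambda$ need not vanish as $\lambda \to \infty$ (e.g.\ for symbols oscillating towards the boundary), so the hypotheses of that lemma fail. The direct estimate above circumvents this. Alternatively, one may obtain the displayed pluriharmonic concentration from Lemma \ref{prop4} itself by applying it to a Lipschitz cut-off $\psi$ with $\chi_{\Omega \setminus E(z,\delta)} \le \psi \le \chi_{\Omega \setminus E(z,\delta/2)}$: such a $\psi$ lies in $\UC(\Omega)$, so Theorem \ref{thm1} and Corollary \ref{cor2} give $\|H_\psi^\lambda\|_\lambda, \|H_\psi^{\text{ah},\lambda}\|_\lambda \to 0$, while $\mathcal B_\lambda(\psi)(z), \mathcal B_\lambda^{\text{ah}}(\psi)(z) \to \psi(z) = 0$ is classical; Lemma \ref{prop4} then yields $\mathcal B_\lambda^{\text{ph}}(\psi)(z) \to 0$, and $0 \le \int_{\Omega \setminus E(z,\delta)} |k_{\text{ph}}^\lambda(w,z)|^2\, dv_\lambda \le \mathcal B_\lambda^{\text{ph}}(\psi)(z)$ does the rest.
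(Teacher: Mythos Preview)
Your proof is correct. Both routes you give --- the direct pointwise estimate via $|K_{\text{ph}}^\lambda(w,z)| \le 2|K^\lambda(w,z)| + 1$ and the alternative via Lemma~\ref{prop4} applied to a $\UC$ cut-off --- are valid.

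The paper's proof is precisely your alternative: it reduces to the tail mass $\int_{\Omega\setminus E(z,\delta)} |k_{\text{ph}}^\lambda(\cdot,z)|^2\,dv_\lambda$ exactly as you do, then chooses a bounded uniformly continuous $\chi$ with $\chi\equiv 1$ on $\Omega\setminus E(z,\delta)$ and $\chi(z)=0$, and invokes Lemma~\ref{prop4} (via Theorem~\ref{thm1} and Corollary~\ref{cor2}) to get $\mathcal B_\lambda^{\text{ph}}(\chi)(z)\to 0$, which dominates the tail. Your primary route is genuinely different and more elementary: it bypasses the Hankel-operator machinery of Lemma~\ref{prop4} entirely and deduces the pluriharmonic concentration from the holomorphic one by a pointwise bound and Cauchy--Schwarz. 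This buys independence from Theorem~\ref{thm1} at the cost of a slightly longer explicit computation; the paper's route, conversely, reuses Lemma~\ref{prop4} so that the concentration fact for the pluriharmonic kernel comes ``for free'' once that lemma is in place. Your observation that Lemma~\ref{prop4} cannot be applied to $f$ itself (since $\|H_f^\lambda\|_\lambda\to 0$ may fail for general $f\in C_b$) is exactly the reason the paper also applies it only to the auxiliary cut-off rather than to $f$.
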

\begin{proof}
Fix $z \in \Omega$ and let $\varepsilon > 0$ be arbitrary. Let $\delta > 0$ be such that $|f(w) - f(z)| < \varepsilon$ for $w \in E(z,\delta)$. Then,
\begin{align*}
|\mathcal B_\lambda^{\text{ph}}(f)(z) - f(z)| &\leq \int_\Omega |f(w) - f(z)| \frac{|K_{\text{ph}}^\lambda(z,w)|^2}{K_{\text{ph}}^\lambda(z,z)} dv_\lambda(w)\\
&\leq \varepsilon + 2 \| f\|_\infty \int_{\Omega \setminus E(z, \delta)} \frac{|K_{\text{ph}}^\lambda(z,w)|^2}{K_{\text{ph}}^\lambda(z,z)} dv_\lambda(w).
\end{align*}
Let $\chi \in C(\Omega)$ be such that $\chi|_{\Omega \setminus E(z, \delta)} \equiv 1$, $0 \leq \chi \leq 1$ and $\chi(z) = 0$. In particular, $\chi \in \BUC(\Omega)$. By Theorem \ref{thm1} and Corollary \ref{cor2}, $\chi$ fulfills the assumptions of Lemma \ref{prop4} (it is well known that the holomorphic and anti-holomorphic Berezin transforms converge pointwise for such a function), hence
\begin{align*}
\int_{\Omega \setminus E(z, \delta)} \frac{|K_{\text{ph}}^\lambda(z,w)|^2}{K_{\text{ph}}^\lambda(z,z)} dv_\lambda(w) &= \int_{\Omega \setminus E(z, \delta)} [\chi(w) - \chi(z)] \frac{|K_{\text{ph}}^\lambda(z,w)|^2}{K_{\text{ph}}^\lambda(z,z)} dv_\lambda(w)\\
&\leq \int_{\Omega} [\chi(w) - \chi(z)] \frac{|K_{\text{ph}}^\lambda(z,w)|^2}{K_{\text{ph}}^\lambda(z,z)} dv_\lambda(w)\\
&= \mathcal B_\lambda^{\text{ph}}(\chi)(z) - \chi(z) \to 0, \quad \lambda \to \infty.
\end{align*}
Therefore, it holds
\[ \limsup_{\lambda \to \infty} |\mathcal B_\lambda^{\text{ph}} (f)(z) - f(z)| \leq \varepsilon. \]
Since $\varepsilon$ was arbitrarily small the result follows.
\end{proof}
The following result holds for $\Omega = \mathbb C^n$ or $\Omega = \mathbb B^n$:
\begin{prop}
For $f \in \VMO_b(\Omega)$ it holds
\[ \mathcal B_\lambda^{\emph{\text{ph}}}(f)(z) \to f(z),\quad \lambda \to \infty \]
almost everywhere.
\end{prop}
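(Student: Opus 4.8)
The plan is to invoke Lemma \ref{prop4} at almost every point $z \in \Omega$. So I need to verify, for $f \in \VMO_b(\Omega)$ with $\Omega = \mathbb C^n$ or $\Omega = \mathbb B^n$, the three hypotheses of that lemma: $\| H_f^\lambda\|_\lambda \to 0$, $\| H_f^{\text{ah},\lambda}\|_\lambda \to 0$, and $\mathcal B_\lambda(f)(z) \to f(z)$, $\mathcal B_\lambda^{\text{ah}}(f)(z) \to f(z)$ for a.e. $z$.

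The two Hankel norm estimates are immediate. Since $\Omega \in \{\mathbb C^n, \mathbb B^n\}$ and $f \in \VMO_b(\Omega)$, part (2) of Theorem \ref{thm1} gives $\| H_f^\lambda\|_\lambda \to 0$; and as $\VMO_b(\Omega)$ is invariant under complex conjugation (one has $A_2(\overline f, z, \rho) = A_2(f, z, \rho)$), Corollary \ref{cor2} applies and yields $\| H_f^{\text{ah},\lambda}\|_\lambda \to 0$. Moreover, since $k_{\text{ah}}^\lambda(w,z) = \overline{k^\lambda(w,z)}$ we have $|k_{\text{ah}}^\lambda(w,z)| = |k^\lambda(w,z)|$ for all $w,z$, hence $\mathcal B_\lambda^{\text{ah}}(f) = \mathcal B_\lambda(f)$ identically; so it is enough to prove the a.e. convergence $\mathcal B_\lambda(f)(z) \to f(z)$ of the holomorphic Berezin transform.

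For the latter, I would argue that the probability densities $w \mapsto |k^\lambda(w,z)|^2\, dv_\lambda(w)$ form an approximate identity concentrating at $z$ as $\lambda \to \infty$. On $\mathbb C^n$ this density is exactly $(\lambda/\pi)^n e^{-\lambda|z-w|^2}\, dv(w)$, so $\mathcal B_\lambda(f)$ is the convolution of $f$ with an $L^1$-normalized Gaussian of width $\sim \lambda^{-1/2}$, and the standard approximate-identity theorem gives $\mathcal B_\lambda(f)(z) \to f(z)$ at every Lebesgue point of $f$. On $\mathbb B^n$ I would use the Möbius identity $\frac{|K^\lambda(w,z)|^2}{K^\lambda(z,z)K^\lambda(w,w)} = (1 - |\varphi_z(w)|^2)^{n+1+\lambda}$ together with the substitution $w = \varphi_z(u)$ (which preserves the invariant measure $(1-|u|^2)^{-(n+1)}\, dv(u)$) to rewrite
\[ \mathcal B_\lambda(f)(z) = \frac{\Gamma(n+1+\lambda)}{\pi^n\Gamma(\lambda+1)} \int_{\mathbb B^n} f(\varphi_z(u))\,(1 - |u|^2)^\lambda\, dv(u), \]
where the prefactor makes $(1-|u|^2)^\lambda\, dv(u)$ a probability measure. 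The weight $(1-|u|^2)^\lambda$ concentrates at $u = 0$ (the prefactor grows only polynomially in $\lambda$, while $(1-\delta^2)^\lambda$ decays exponentially for fixed $\delta > 0$), near $u = 0$ it is comparable to the Gaussian above, $\varphi_z(0) = z$, and for fixed $z$ the map $\varphi_z$ is a smooth diffeomorphism of $\mathbb B^n$ carrying the Lebesgue points of $f$ at $z$ to the Lebesgue points of $f \circ \varphi_z$ at $0$; so again $\mathcal B_\lambda(f)(z) \to f(z)$ at every Lebesgue point of $f$. Since $f \in L^\infty(\Omega) \subseteq L_{loc}^1(\Omega)$, almost every $z$ is a Lebesgue point, which gives the a.e. convergence.

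Putting these together, for a.e. $z \in \Omega$ all hypotheses of Lemma \ref{prop4} hold, so $\mathcal B_\lambda^{\text{ph}}(f)(z) \to f(z)$ for a.e. $z$. The only part requiring genuine work is the a.e. (rather than merely continuous-point) convergence of the holomorphic Berezin transform on $\mathbb B^n$ — making the approximate-identity argument precise in the Lebesgue-point case while tracking the $z$-dependent geometry near $\partial\mathbb B^n$ — but this is classical and could also simply be quoted from the literature on the Berezin transform.
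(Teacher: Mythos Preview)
Your proposal is correct and follows essentially the same approach as the paper: invoke Lemma~\ref{prop4} and verify its hypotheses via Theorem~\ref{thm1}, Corollary~\ref{cor2}, and the a.e.\ convergence of the holomorphic Berezin transform (which the paper cites from \cite{Bauer_Coburn_Hagger} for $\mathbb C^n$ and proves in Appendix~\ref{appendixA} for bounded symmetric domains). Your explicit observation that $\mathcal B_\lambda^{\text{ah}}(f) = \mathcal B_\lambda(f)$ because $|k_{\text{ah}}^\lambda| = |k^\lambda|$ is a clean way to handle the anti-holomorphic side, and your approximate-identity sketch for $\mathbb B^n$ is the standard route; the paper's appendix argument via the Hardy--Littlewood maximal function is essentially the same mechanism, phrased so as to cover general bounded symmetric domains rather than just $\mathbb B^n$.
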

\begin{proof}
This is just a consequence of Lemma \ref{prop4}: It holds $\mathcal B_\lambda(f) \to f$ almost everywhere by \cite[Theorem 6.2]{Bauer_Coburn_Hagger} for $\Omega = \mathbb C^n$ or by Appendix \ref{appendixA} for $\Omega = \mathbb B^n$, the convergence for the anti-holomorphic Berezin transforms follows easily as well. Further, the Hankel operators converge to 0 in norm by Theorem \ref{thm1} and Corollary \ref{cor2}.
\end{proof}
\subsection{The second quantization property}
In what follows, we will also consider the following operators for suitable measurable symbols $f$ in addition to the operators $A_f^\lambda$ and $B_f^\lambda$ defined above:
\begin{align*}
C_f^{1,\lambda} &:= (I- \pph)M_f: \ahol \to \aph^\perp\\
C_f^{2,\lambda} &:= (I - \pph) M_f: \aah \ominus \acons \to \aph^\perp\\
D_f^{1,\lambda} &:= \phol M_f: \aph^\perp \to \ahol\\
D_f^{2,\lambda} &:= (\pah - \pcons)M_f: \aph^\perp \to \aah \ominus \acons\\
E_f^\lambda &:= (\pah - \pcons)M_f: \acons \to \aah \ominus \acons\\
G_f^\lambda &:= \pcons M_f: \aah \ominus \acons \to \acons.
\end{align*}
If $f \in L^\infty(\Omega)$, all those operators are obviously bounded by $\| f\|_\infty$. The following lemma provides all the information on those operators needed for our purposes. During this section, for $f \in \UC(\Omega)$ we always include the case where $\Omega$ is a general bounded symmetric domain, while for $f \in \VMO_b(\Omega)$ we consider only the special case $\Omega = \mathbb{B}^n$. Still, in both cases $\Omega = \mathbb{C}^n$ is allowed.
\begin{lmm}\label{lmm4}
For $f \in \VMO_b(\Omega)$ or $f \in \UC(\Omega)$, the operators $A_f^\lambda$, $B_f^\lambda$, $C_f^{1,\lambda}$, $C_f^{2,\lambda}$, $D_f^{1,\lambda}$, $D_f^{2,\lambda}$, $E_f^\lambda$ and $G_f^\lambda$ are bounded with norm tending to 0 as $\lambda \to \infty$.
\end{lmm}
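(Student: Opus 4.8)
The plan is to reduce each of the eight operators to one of the four Hankel operators $H_f^\lambda$, $H_{\overline f}^\lambda$, $H_f^{\text{ah},\lambda}$, $H_{\overline f}^{\text{ah},\lambda}$, all of whose norms tend to $0$ as $\lambda \to \infty$. Indeed, both $\UC(\Omega)$ and $\VMO_b(\Omega)$ are invariant under complex conjugation, so $\overline f$ satisfies the same hypothesis as $f$, and then Theorem \ref{thm1} together with Corollary \ref{cor2} gives $\|H_f^\lambda\|_\lambda, \|H_{\overline f}^\lambda\|_\lambda, \|H_f^{\text{ah},\lambda}\|_\lambda, \|H_{\overline f}^{\text{ah},\lambda}\|_\lambda \to 0$. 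The structural facts I would use repeatedly are the orthogonal decomposition
\[ L_\lambda^2(\Omega) = \ahol \oplus \bigl(\aah \ominus \acons\bigr) \oplus \aph^\perp, \]
which yields $\aah \ominus \acons \subseteq (\ahol)^\perp$, $\aph^\perp \subseteq (\ahol)^\perp$ and $\aph^\perp \subseteq (\aah)^\perp$; the elementary projection identities $\pph\phol = \phol$, $\pph\pah = \pah$ and $\pcons\phol = \pcons = \pcons\pah$ (all because $\acons \subseteq \ahol \subseteq \aph$ and $\acons \subseteq \aah \subseteq \aph$); and the two adjoint formulas $\phol M_f u = (H_{\overline f}^\lambda)^\ast u$ for $u \in (\ahol)^\perp$ and $\pah M_f u = (H_{\overline f}^{\text{ah},\lambda})^\ast u$ for $u \in (\aah)^\perp$, which are immediate from the definition of a Hankel operator.

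With these preparations the five purely off-diagonal operators are handled directly. Since $\aah \ominus \acons$ and $\aph^\perp$ both lie in $(\ahol)^\perp$, one has $A_f^\lambda = (H_{\overline f}^\lambda)^\ast|_{\aah \ominus \acons}$ and $D_f^{1,\lambda} = (H_{\overline f}^\lambda)^\ast|_{\aph^\perp}$, so both are bounded by $\|H_{\overline f}^\lambda\|_\lambda$. For $B_f^\lambda$ I would pass to the adjoint: $(B_f^\lambda)^\ast v = \phol M_{\overline f} v = (H_f^\lambda)^\ast v$ for $v \in \aah \ominus \acons$, hence $\|B_f^\lambda\|_\lambda = \|(B_f^\lambda)^\ast\|_\lambda \le \|H_f^\lambda\|_\lambda$. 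For $C_f^{1,\lambda}$ and $C_f^{2,\lambda}$, use $(I - \pph)(I - \phol) = I - \pph$ and $(I - \pph)(I - \pah) = I - \pph$ to write $C_f^{1,\lambda} = (I - \pph)H_f^\lambda$ on $\ahol$ and $C_f^{2,\lambda} = (I - \pph)H_f^{\text{ah},\lambda}$ on $\aah \ominus \acons$, bounding them by $\|H_f^\lambda\|_\lambda$ and $\|H_f^{\text{ah},\lambda}\|_\lambda$ respectively.

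The remaining three operators involve the constants and require only a cosmetic correction. For $D_f^{2,\lambda}$: since $\aph^\perp \subseteq (\aah)^\perp$ and $\pah - \pcons = (I - \pcons)\pah$, we get $D_f^{2,\lambda} = (I - \pcons)(H_{\overline f}^{\text{ah},\lambda})^\ast|_{\aph^\perp}$, so $\|D_f^{2,\lambda}\|_\lambda \le \|H_{\overline f}^{\text{ah},\lambda}\|_\lambda$. For $E_f^\lambda$: as $\acons \subseteq \ahol$, the operator $E_f^\lambda$ is just the restriction of $B_f^\lambda$ to $\acons$, so $\|E_f^\lambda\|_\lambda \le \|B_f^\lambda\|_\lambda$. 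For $G_f^\lambda$: from $\pcons = \pcons\phol$ we obtain $G_f^\lambda = \pcons\,\phol M_f|_{\aah \ominus \acons} = \pcons A_f^\lambda$, so $\|G_f^\lambda\|_\lambda \le \|A_f^\lambda\|_\lambda$. Collecting the eight estimates and letting $\lambda \to \infty$ finishes the proof.

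There is no genuine analytic difficulty here — everything is ultimately the Hankel norm estimates of Theorem \ref{thm1} and Corollary \ref{cor2}. The only step demanding care is the projection bookkeeping: getting the inclusions $\aah \ominus \acons, \aph^\perp \subseteq (\ahol)^\perp$ and $\aph^\perp \subseteq (\aah)^\perp$ and the identities $\pph\phol = \phol$, $\pcons = \pcons\phol$ etc.\ correct, and recognizing that $D_f^{2,\lambda}$ is not literally an adjoint Hankel operator but differs from one by the harmless factor $I - \pcons$, while $E_f^\lambda$ and $G_f^\lambda$ are subordinate to $B_f^\lambda$ and $A_f^\lambda$ rather than needing independent treatment.
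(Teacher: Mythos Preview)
Your proof is correct and follows essentially the same approach as the paper: reduce each of the eight operators to a Hankel operator (or the adjoint of one) with symbol $f$ or $\overline f$, then invoke Theorem~\ref{thm1} and Corollary~\ref{cor2}. Your bookkeeping differs only cosmetically---in particular, your direct identification $A_f^\lambda = (H_{\overline f}^\lambda)^\ast|_{\aah \ominus \acons}$ is actually cleaner than the paper's two-term splitting $A_f^\lambda = (P^\lambda - P_\mathbb{C}^\lambda)H_f^{\text{ah},\lambda}|_{\aah \ominus \acons} + G_f^\lambda$, and your observation that $E_f^\lambda$ and $G_f^\lambda$ are subordinate to $B_f^\lambda$ and $A_f^\lambda$ is a nice shortcut.
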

\begin{proof} 
Observe that
\begin{align*}
B_f^\lambda &= (P_\text{ah}^\lambda - P_\mathbb{C}^\lambda)(I - P^\lambda)M_f = (P_\text{ah}^\lambda - P_\mathbb{C}^\lambda)H_f^\lambda,\\
C_f^{1,\lambda} &= (I - P_\text{ph}^\lambda) (I - P^\lambda)M_f = (I- P_\text{ph}^\lambda)H_f^{\lambda},\\
C_f^{2,\lambda} &= (I-P_\text{ph}^\lambda) (I-P_\text{ah}^\lambda)M_f = (I - P_\text{ph}^\lambda)H_{f}^{\text{ah},\lambda}|_{\aah \ominus \acons},\\
E_f^\lambda &= (P_\text{ah}^\lambda -P_{\mathbb{C}}^\lambda) (I - P^\lambda)M_f = (P_\text{ah}^\lambda - P_{\mathbb{C}}^\lambda) H_f^\lambda|_{\aah \ominus \acons}, 
\end{align*}
which proves the results for those operators using Theorem \ref{thm1} and Corollary \ref{cor2}. Further, $D_f^{1,\lambda} = (C_{\overline{f}}^{1,\lambda})^\ast$, $D_f^{2,\lambda} = (C_{\overline{f}}^{2,\lambda})^\ast$ and $G_f^\lambda = (E_{\overline{f}}^\lambda)^\ast$. Finally,
\begin{align*}
A_f^\lambda &= (P^\lambda - P_\mathbb{C}^\lambda)M_f + P_\mathbb{C}^\lambda M_f = (P^\lambda - P_\mathbb{C}^\lambda)(I - P_{\text{ah}}^\lambda)M_f + P_\mathbb{C}^\lambda M_f\\
&= (P^\lambda - P_\mathbb{C}^\lambda)H_f^{\text{ah}, \lambda}|_{\aah \ominus \acons} + G_f^\lambda,
\end{align*}
which finishes the proof.
\end{proof}
The semi-commutator of two pluriharmonic Toeplitz operators has the matrix representation (with respect to the orthogonal decomposition $\aph = \ahol \bigoplus (\aah \ominus \acons)$)
\begin{align}\label{semicomdec}
T_f^{\text{ph},\lambda} T_g^{\text{ph},\lambda} - T_{fg}^{\text{ph}, \lambda} = \begin{pmatrix}[c|c]
(1,1) & (1,2) \\ \hline
(2,1) & (2,2)
\end{pmatrix},
\end{align}
where
\begin{align*}
(1,1) &= T_f^\lambda T_g^\lambda - T_{fg}^\lambda + A_f^\lambda B_g^\lambda,\\
(1,2) &= T_f^\lambda A_g^\lambda + A_f^\lambda T_g^{\text{ah} \ominus \mathbb{C}, \lambda} - A_{fg}^\lambda,\\
(2,1) &= B_f^\lambda T_g^\lambda + T_f^{\text{ah} \ominus \mathbb{C}, \lambda} B_g^\lambda - B_{fg}^\lambda,\\
(2,2) &= B_f^\lambda A_g^\lambda + T_f^{\text{ah} \ominus \mathbb{C}, \lambda}T_g^{\text{ah} \ominus \mathbb{C}, \lambda} - T_{fg}^{\text{ah} \ominus \mathbb{C}, \lambda}.
\end{align*}
\begin{prop}[Second quantization property]\label{quant2}
Assume $f \in \VMO_b(\Omega)$ or $f \in \UC(\Omega)$. Then, it holds
\[ \| T_f^{\emph{\text{ph}}, \lambda} T_g^{\emph{\text{ph}}, \lambda} - T_{fg}^{\emph{\text{ph}}, \lambda}\|_\lambda \to 0,\quad  \lambda \to \infty \]
for each $g \in L^\infty(\Omega)$ or $g \in \UC(\Omega)$.
\end{prop}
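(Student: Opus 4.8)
The plan is to reduce the statement entirely to Lemma~\ref{lmm4}, by showing that the semi-commutator factors --- just as in the holomorphic identity~(\ref{hankelrel}) --- through the ``small'' operators $C_g^{j,\lambda}$ and $D_f^{i,\lambda}$. The starting point is that, for $f,g$ bounded or in $\UC(\Omega)$,
\begin{align*}
T_f^{\text{ph},\lambda} T_g^{\text{ph},\lambda} - T_{fg}^{\text{ph},\lambda}
  = \pph M_f \pph M_g - \pph M_f M_g
  = \pph M_f (\pph - I) M_g,
\end{align*}
where $fg$ is treated through $M_{fg} = M_f M_g$, so that no statement about $fg$ lying in $\UC(\Omega)$ is needed. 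Since $(I - \pph)M_g$ maps $\aph$ into $\aph^\perp$ while $\pph M_f$ maps $\aph^\perp$ into $\aph$, this exhibits the semi-commutator as $-\big(\pph M_f|_{\aph^\perp}\big)\big((I - \pph)M_g|_{\aph}\big)$, a composition of two operators which are bounded (in the $\UC$ case this is exactly the content of Lemma~\ref{lmm4} and its proof, in the $L^\infty$ case it is manifest); after extending by density this is the bounded operator we must estimate.

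To get a quantitative bound I would resolve this composition with respect to the orthogonal decomposition $\aph = \ahol \oplus (\aah \ominus \acons)$. Substituting $\pph = \phol + \pah - \pcons$ into $\pph M_f(\pph - I)M_g$ and tracking which summand of the leftmost $\pph$ produces an $\ahol$- respectively $(\aah \ominus \acons)$-component, the three middle summands telescope against the $-I$ coming from $\pph - I$; one finds that, relative to this decomposition, the four entries of the matrix~(\ref{semicomdec}) are in fact
\begin{align*}
(1,1) &= -D_f^{1,\lambda} C_g^{1,\lambda}, & (1,2) &= -D_f^{1,\lambda} C_g^{2,\lambda},\\
(2,1) &= -D_f^{2,\lambda} C_g^{1,\lambda}, & (2,2) &= -D_f^{2,\lambda} C_g^{2,\lambda},
\end{align*}
so that $T_f^{\text{ph},\lambda} T_g^{\text{ph},\lambda} - T_{fg}^{\text{ph},\lambda}$ equals minus the product of the column operator with components $D_f^{1,\lambda}, D_f^{2,\lambda}$ and the row operator with components $C_g^{1,\lambda}, C_g^{2,\lambda}$. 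Consequently
\begin{align*}
\big\| T_f^{\text{ph},\lambda} T_g^{\text{ph},\lambda} - T_{fg}^{\text{ph},\lambda} \big\|_\lambda
 \le \big(\|D_f^{1,\lambda}\|_\lambda^2 + \|D_f^{2,\lambda}\|_\lambda^2\big)^{1/2}\big(\|C_g^{1,\lambda}\|_\lambda^2 + \|C_g^{2,\lambda}\|_\lambda^2\big)^{1/2}.
\end{align*}

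Finally I would invoke Lemma~\ref{lmm4}: the symbol $f$ satisfies its hypotheses, hence $\|D_f^{1,\lambda}\|_\lambda, \|D_f^{2,\lambda}\|_\lambda \to 0$ as $\lambda \to \infty$. For the factors carrying $g$: if $g \in L^\infty(\Omega)$ then $C_g^{1,\lambda}$ and $C_g^{2,\lambda}$ are restrictions of $(I - \pph)M_g$ to subspaces, so $\|C_g^{j,\lambda}\|_\lambda \le \|g\|_\infty$; if $g \in \UC(\Omega)$, then Lemma~\ref{lmm4} gives $\|C_g^{j,\lambda}\|_\lambda \to 0$. In either case these norms stay bounded, so the right-hand side above tends to $0$, which is the assertion.

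The step requiring genuine care is the telescoping one: when $f$ or $g$ merely lies in $\UC(\Omega)$, the individual Toeplitz operators $T_f^\lambda$, $T_g^\lambda$ and $T_g^{\text{ah} \ominus \mathbb{C}, \lambda}$ appearing in the original form of the blocks of~(\ref{semicomdec}) are in general unbounded, so one cannot estimate the blocks summand by summand --- the cancellation encoded in $\pph = \phol + \pah - \pcons$ must be performed before any norm estimate, leaving each block as a composition $D_f^{i,\lambda} C_g^{j,\lambda}$ of bounded operators one of which is small. The remaining points --- that all compositions are meaningful on the dense invariant domain $\mathcal D_\lambda$ and that the resulting operator identities extend by density, and that none of this distinguishes between $\Omega = \mathbb C^n$ and a bounded symmetric domain --- are routine, being entirely parallel to the holomorphic situation behind Theorem~\ref{thm1} and the structural computations preceding the proposition.
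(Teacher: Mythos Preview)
Your argument is correct and is in fact a streamlining of the paper's proof. The paper works block by block with the matrix representation~(\ref{semicomdec}): for the off-diagonal entries it observes, exactly as you do, that $(1,2)=-D_f^{1,\lambda}C_g^{2,\lambda}$ and $(2,1)=-D_f^{2,\lambda}C_g^{1,\lambda}$ and invokes Lemma~\ref{lmm4}; but for the diagonal entries it keeps the expressions $T_f^\lambda T_g^\lambda - T_{fg}^\lambda + A_f^\lambda B_g^\lambda$ and $T_f^{\text{ah}\ominus\mathbb C,\lambda}T_g^{\text{ah}\ominus\mathbb C,\lambda} - T_{fg}^{\text{ah}\ominus\mathbb C,\lambda} + B_f^\lambda A_g^\lambda$ and then appeals separately to Theorem~\ref{thm1}, Corollary~\ref{cor2}, and a further $2\times 2$ decomposition~(\ref{ahdecomp}) of the anti-holomorphic Toeplitz operator. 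You instead recognize at the outset the pluriharmonic analogue of the Hankel identity~(\ref{hankelrel}),
\[
T_f^{\text{ph},\lambda}T_g^{\text{ph},\lambda}-T_{fg}^{\text{ph},\lambda}
  = -\bigl(\pph M_f\big|_{\aph^\perp}\bigr)\bigl((I-\pph)M_g\big|_{\aph}\bigr),
\]
so that \emph{all four} blocks factor uniformly as $-D_f^{i,\lambda}C_g^{j,\lambda}$, and Lemma~\ref{lmm4} alone finishes the job. This buys a shorter and more conceptual proof, and it avoids having to control the (possibly unbounded, in the $\UC$ case) diagonal Toeplitz pieces separately; the paper's route, on the other hand, makes the connection to the individual holomorphic and anti-holomorphic semi-commutator asymptotics more explicit. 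Your care about performing the cancellation before estimating (so that unbounded summands never appear in isolation) is well placed and is precisely the reason your factored form is preferable.
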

\begin{proof}
We need to show that all four components in equation (\ref{semicomdec}) converge in norm to 0. $\|$(1,1)$\|_\lambda \to 0$ follows by Theorem \ref{thm1} and Lemma \ref{lmm4}. For $\| $(1,2)$\|_\lambda \to 0$ and $\|$(2,1)$\|_\lambda \to 0$, observe that
\begin{align*}
(1,2) = -D_f^{1,\lambda} C_g^{2,\lambda} \text{ and } (2,1) = -D_f^{2,\lambda} C_g^{1,\lambda},
\end{align*}
then use Lemma \ref{lmm4}. Further,  also by Lemma \ref{lmm4}, $\|$(2,2)$\|_\lambda \to 0$ follows if we show 
\[ \| T_f^{\text{ah} \ominus \mathbb{C}, \lambda}T_g^{\text{ah} \ominus \mathbb{C}, \lambda} - T_{fg}^{\text{ah} \ominus \mathbb{C}, \lambda}\|_\lambda \to 0. \]
By Corollary \ref{cor2} it holds
\begin{equation}\label{ahasymp}
\| T_f^{\text{ah}, \lambda} T_g^{\text{ah}, \lambda} - T_{fg}^{\text{ah}, \lambda} \|_\lambda \to 0.
\end{equation}
Using the orthogonal direct decomposition 
\[ \aah = \Big ( \aah \ominus \acons \Big ) \oplus \acons, \]
we get the following matrix representation:
\begin{align}\label{ahdecomp}
T_f^{\text{ah}, \lambda} = \begin{pmatrix}[c|c]
T_f^{\text{ah} \ominus \mathbb{C}, \lambda} & E_f^\lambda \\ \hline
G_f^\lambda & P_\mathbb{C}^\lambda M_f: \acons \to \acons
\end{pmatrix}.
\end{align}
Hence, the matrix representation for $T_f^{\text{ah}, \lambda} T_g^{\text{ah}, \lambda} - T_{fg}^{\text{ah}, \lambda}$ with respect to the same decomposition has the $(1,1)$-entry
\begin{align*}
T_f^{\text{ah} \ominus \mathbb{C}, \lambda} T_g^{\text{ah} \ominus \mathbb{C}, \lambda} - T_{fg}^{\text{ah} \ominus \mathbb{C}, \lambda} + E_f^\lambda G_g^\lambda.
\end{align*}
By equations (\ref{ahasymp}) and (\ref{ahdecomp}) we know that the norm of this operator tends to 0 as $\lambda \to \infty$. Since the norm of $E_f^\lambda G_g^\lambda$ goes to 0 as $\lambda \to \infty$ by Lemma \ref{lmm4},
\[ \| T_f^{\text{ah} \ominus \mathbb{C}, \lambda} T_g^{\text{ah} \ominus \mathbb{C}, \lambda} - T_{fg}^{\text{ah} \ominus \mathbb{C}, \lambda}\|_\lambda \to 0, \quad \lambda \to \infty\]
needs to hold as well.
\end{proof}
\subsection{The third quantization property}
Although the third quantization property holds for a big class of symbols for Toeplitz operators on holomorphic Bergman and Segal-Bargmann spaces, it does not hold on the pluriharmonic spaces, which can be seen by a symmetry argument. This has already been noted in \cite{Englis2006}. We repeat the observation for completeness and give a somewhat refined result. Observe that it holds
\begin{align*}
\overline{\pph h} = \pph \overline{h}
\end{align*}
for $h \in \aph$, as $\pph$ is an integral operator with real-valued kernel. Therefore
\begin{align*}
\overline{T_f^{\text{ph},\lambda} h} = T_{\overline{f}}^{\text{ph},\lambda} \overline{h}
\end{align*}
for each $f \in L^\infty(\Omega)$ and $h \in \aph$. Thus, it holds for $f, g \in L^\infty(\Omega)$ and $h \in \aph$
\begin{align*}
\overline{[T_f^{\text{ph},\lambda}, T_g^{\text{ph},\lambda}]^\ast(h)} = \overline{[T_{\overline{g}}^{\text{ph},\lambda}, T_{\overline{f}}^{\text{ph},\lambda}](h)} = [T_g^{\text{ph},\lambda}, T_f^{\text{ph},\lambda}](\overline{h}) = -[T_f^{\text{ph},\lambda}, T_g^{\text{ph},\lambda}](\overline{h}).
\end{align*}
This implies for the pluriharmonic Berezin transform of $[T_f^{\text{ph},\lambda}, T_g^{\text{ph},\lambda}]$, using that the pluriharmonic reproducing kernel is real-valued:
\begin{align*}
\mathcal B_\lambda^{\text{ph}}([T_f^{\text{ph},\lambda}, T_g^{\text{ph},\lambda}])(z) &= \frac{\langle [T_f^{\text{ph},\lambda}, T_g^{\text{ph},\lambda}] K_\text{ph}^\lambda(\cdot, z),K_\text{ph}^\lambda(\cdot, z) \rangle }{K_\text{ph}^\lambda(z, z)}\\
&= \frac{\langle \overline{[T_f^{\text{ph},\lambda}, T_g^{\text{ph},\lambda}]^\ast K_\text{ph}^\lambda(\cdot, z)},\overline{K_\text{ph}^\lambda(\cdot, z)} \rangle }{K_\text{ph}^\lambda(z, z)}\\
&= -\frac{\langle [T_f^{\text{ph},\lambda}, T_g^{\text{ph},\lambda}] K_\text{ph}^\lambda(\cdot, z),K_\text{ph}^\lambda(\cdot, z) \rangle }{K_\text{ph}^\lambda(z, z)}\\
&= -\mathcal B_\lambda^{\text{ph}}([T_f^{\text{ph},\lambda}, T_g^{\text{ph},\lambda}])(z)
\end{align*}
and therefore $\mathcal B_\lambda^{\text{ph}}([T_f^{\text{ph},\lambda}, T_g^{\text{ph},\lambda}])(z) = 0$ for all $z \in \Omega$. Now let $f, g \in L^\infty(\Omega)$ and $h \in C_b(\Omega)$. Then, if 
\begin{align*}
\Big \| \frac{\lambda}{i}[T_f^{\text{ph},\lambda},T_g^{\text{ph},\lambda}] - T_{h}^{\text{ph},\lambda} \Big \|_\lambda \to 0, \quad \lambda \to \infty
\end{align*}
is assumed to hold, it follows
\begin{align*}
\Big \| \frac{\lambda}{i}[T_f^{\text{ph},\lambda},T_g^{\text{ph},\lambda}] - T_{h}^{\text{ph},\lambda} \Big \|_\lambda &\geq \Big \| \mathcal B_\lambda^{\text{ph}}\Big (\frac{\lambda}{i}[T_f^{\text{ph},\lambda},T_g^{\text{ph},\lambda}] - T_{h}^{\text{ph},\lambda} \Big ) \Big \|_\infty\\
&= \| \mathcal B_\lambda^{\text{ph}} T_{h}^{\text{ph},\lambda}\|_\infty  \geq 0,
\end{align*}
and hence $\| \mathcal B_\lambda^{\text{ph}}(T_{h}^{\text{ph},\lambda})\|_\infty \to 0$, which implies, by Proposition \ref{cor5}, $h = 0$. This gives the following consequence:
\begin{prop}[Third quantization property]
Let $f, g \in L^\infty(\Omega)$ and $h \in C_b(\Omega)$. Then,
\[ \Big \| \frac{\lambda}{i}[T_f^{\emph{\text{ph}},\lambda}, T_g^{\emph{\text{ph}},\lambda}] - T_{h}^{\emph{\text{ph}},\lambda} \Big \|_\lambda \to 0, \quad \lambda \to \infty \]
holds if and only if $h = 0$ and $\|[T_f^{\emph{\text{ph}},\lambda}, T_g^{\emph{\text{ph}},\lambda}]\|_\lambda \in o(1/\lambda)$ as $\lambda \to \infty$.\\
In particular, there cannot be any Poisson structure $\{ \cdot, \cdot\}$ on $\Omega$ such that
\[ \Big \| \frac{\lambda}{i} [T_f^{\emph{\text{ph}},\lambda}, T_g^{\emph{\text{ph}}, \lambda}] - T_{\{ f, g\}}^{\emph{\text{ph}}, \lambda} \Big \|_\lambda \to 0, \quad \lambda \to \infty \]
holds for all $f, g \in C_c^\infty(\Omega)$.
\end{prop}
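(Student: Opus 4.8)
The bulk of the work is already contained in the discussion preceding the statement, so the plan is mainly to package that symmetry computation together with Proposition~\ref{cor5}. For the forward (``only if'') direction, assume $\big\|\frac{\lambda}{i}[T_f^{\text{ph},\lambda}, T_g^{\text{ph},\lambda}] - T_h^{\text{ph},\lambda}\big\|_\lambda \to 0$. Since $f,g \in L^\infty(\Omega)$, the commutator $[T_f^{\text{ph},\lambda}, T_g^{\text{ph},\lambda}]$ is a bounded operator on $\aph$, so its pluriharmonic Berezin transform is defined and, by the symmetry computation above, vanishes identically. Applying $\mathcal B_\lambda^{\text{ph}}$ to the difference and using its linearity, the identity $\mathcal B_\lambda^{\text{ph}} T_h^{\text{ph},\lambda} = \mathcal B_\lambda^{\text{ph}} h$, and the elementary bound $\|\mathcal B_\lambda^{\text{ph}} A\|_\infty \le \|A\|_\lambda$, one obtains $\|\mathcal B_\lambda^{\text{ph}} h\|_\infty \to 0$. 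Because $h \in C_b(\Omega)$, Proposition~\ref{cor5} gives $\mathcal B_\lambda^{\text{ph}}(h)(z) \to h(z)$ for every $z \in \Omega$, and therefore $h \equiv 0$. Substituting $h = 0$ back into the hypothesis leaves $\lambda\,\|[T_f^{\text{ph},\lambda}, T_g^{\text{ph},\lambda}]\|_\lambda \to 0$, i.e.\ $\|[T_f^{\text{ph},\lambda}, T_g^{\text{ph},\lambda}]\|_\lambda \in o(1/\lambda)$.

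The converse is immediate: if $h = 0$ and $\|[T_f^{\text{ph},\lambda}, T_g^{\text{ph},\lambda}]\|_\lambda \in o(1/\lambda)$, then $\big\|\frac{\lambda}{i}[T_f^{\text{ph},\lambda}, T_g^{\text{ph},\lambda}] - T_0^{\text{ph},\lambda}\big\|_\lambda = \lambda\,\|[T_f^{\text{ph},\lambda}, T_g^{\text{ph},\lambda}]\|_\lambda \to 0$. For the ``in particular'' statement, I would argue by contradiction: if some Poisson structure $\{\cdot,\cdot\}$ on $\Omega$ satisfied the third quantization property for all $f,g \in C_c^\infty(\Omega)$, then for each such pair one has $\{f,g\} \in C_c^\infty(\Omega) \subseteq C_b(\Omega)$, so the equivalence just proved forces $\{f,g\} \equiv 0$; thus the bracket vanishes on $C_c^\infty(\Omega)$ and hence, by locality, is the zero bivector field, contradicting the nondegeneracy of the Poisson structure carried by $\Omega$.

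I do not anticipate a genuine obstacle, since the only substantive ingredient---the identical vanishing of $\mathcal B_\lambda^{\text{ph}}$ on commutators of pluriharmonic Toeplitz operators---has already been verified, and everything else is bookkeeping plus Proposition~\ref{cor5}. The one point deserving a careful word is the final contradiction: it should be made explicit that ``Poisson structure'' is understood here in its genuine (nonzero, indeed symplectic/K\"ahler-induced) sense, so that the conclusion ``$\{f,g\} = 0$ for all $f,g \in C_c^\infty(\Omega)$'' is actually contradictory rather than vacuously permitted.
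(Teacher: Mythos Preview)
Your proposal is correct and follows essentially the same approach as the paper: the argument is precisely the symmetry computation already carried out in the text preceding the proposition, combined with Proposition~\ref{cor5} to force $h=0$, after which the $o(1/\lambda)$ condition and the converse are immediate. Your flagged caveat about the meaning of ``Poisson structure'' in the final clause is well taken---the paper is tacitly excluding the trivial bracket, and your reading (that the intended contradiction is with the genuine, nonzero K\"ahler-induced bracket on $\Omega$) is the right one.
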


\section{Spectral theory for \texorpdfstring{$\VMO_\partial$}{VMO} symbols}\label{sec:spectheory}
In this section, we want to find the essential spectrum of $T_f^{\text{ph}, \lambda}$ for fixed $\lambda$ and  $f \in \VMO_\partial(\Omega)$. Here, $\Omega$ is either a general bounded symmetric domain in its Harish-Chandra realization or $\mathbb C^n$. As expected, the essential spectrum consists of the boundary values of the Berezin transform of $f$. The proof is based on standard methods. The main result of this section (Corollary \ref{cor13}) has already been obtained with a different method for the case of the Segal-Bargmann space with $\lambda = 1$ in \cite[Section 4.2]{Bauer_Furutani}.
\begin{lmm}\label{lmmcomp}
Let $f \in C_0(\Omega)$ (i.e. $f$ is continuous and vanishes at the boundary). Then, $T_f^{\emph{\text{ph}}, \lambda}$ is compact.
\end{lmm}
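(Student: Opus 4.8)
The plan is to reduce to compactly supported symbols and then exploit that restricting a reproducing kernel Hilbert space to a relatively compact subset of $\Omega$ is a compact operation.

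First I would record that, by the norm bound $\|T_h^{\text{ph},\lambda}\|_\lambda \le \|h\|_\infty$ from the Preliminaries together with linearity, the map $h \mapsto T_h^{\text{ph},\lambda}$ is a contraction from $(C_0(\Omega),\|\cdot\|_\infty)$ into the bounded operators on $\aph$. Since the compact operators form a closed subspace, the set $\mathcal K := \{ h \in C_0(\Omega) : T_h^{\text{ph},\lambda}\ \text{is compact}\}$ is closed in $C_0(\Omega)$ for the supremum norm, and $C_c(\Omega)$ (continuous, compactly supported functions) is dense in $C_0(\Omega)$; hence it suffices to prove $C_c(\Omega)\subseteq\mathcal K$.

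So I would fix $f\in C_c(\Omega)$ and choose a relatively compact open set $E$ with $\operatorname{supp} f\subseteq E\Subset\Omega$, and factor
\[
T_f^{\text{ph},\lambda} = \pph\, J_E\, M_f\, R_E,
\]
where $R_E\colon \aph\to L^2(E,dv_\lambda)$ is the restriction $g\mapsto g|_E$, $M_f$ is multiplication by $f$ on $L^2(E,dv_\lambda)$, and $J_E\colon L^2(E,dv_\lambda)\hookrightarrow L^2_\lambda(\Omega)$ is extension by zero (this identity uses $\operatorname{supp} f\subseteq E$). All factors other than $R_E$ are bounded, so the task is to show $R_E$ is compact. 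For that I would compute the adjoint, obtaining $R_E^\ast h(w) = \int_E h(z)\,K_{\text{ph}}^\lambda(w,z)\,dv_\lambda(z)$, so that $R_E R_E^\ast$ is the integral operator on $L^2(E,dv_\lambda)$ with kernel $(w,z)\mapsto K_{\text{ph}}^\lambda(w,z)$. Since $K_{\text{ph}}^\lambda = K^\lambda + \overline{K^\lambda} - 1$ and $K^\lambda$ is continuous (indeed real-analytic) on $\Omega\times\Omega$ in all the cases under consideration, this kernel is bounded on the compact set $\overline{E}\times\overline{E}$, hence lies in $L^2(E\times E, dv_\lambda\otimes dv_\lambda)$ because $v_\lambda(E)<\infty$; thus $R_E R_E^\ast$ is Hilbert--Schmidt, in particular compact, and therefore $R_E$ is compact. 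It follows that $T_f^{\text{ph},\lambda}$ is compact, so $C_c(\Omega)\subseteq\mathcal K$ and the lemma follows.

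I do not anticipate a genuine obstacle: the only point needing (mild) care is the compactness of $R_E$, which rests solely on the local boundedness of the pluriharmonic Bergman kernel, and that is immediate from the explicit continuous formulas for $K^\lambda$ on $\mathbb C^n$ and on bounded symmetric domains. As an alternative one could instead use the block decomposition (\ref{decom}) and verify compactness of the four entries $T_f^\lambda$, $T_f^{\text{ah}\ominus\mathbb C,\lambda}$, $A_f^\lambda$ and $B_f^\lambda=(A_{\overline f}^\lambda)^\ast$ separately; each is compact by the same restriction-operator argument (or, for the diagonal holomorphic/anti-holomorphic blocks, by citing the known holomorphic analogue). The direct factorization above is preferable, however, since it treats $\mathbb C^n$ and bounded symmetric domains uniformly and avoids bookkeeping.
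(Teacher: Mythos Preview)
Your proof is correct and follows essentially the same route as the paper: reduce to $C_c(\Omega)$ by density and the norm bound, then observe that for compactly supported $f$ the Toeplitz operator factors through an operator that localizes to a compact subset. The paper states this tersely as ``$\pph M_{\chi_{\operatorname{supp} f}}$ is a compact operator'', while you unpack the same claim via the factorization $T_f^{\text{ph},\lambda}=\pph J_E M_f R_E$ and supply the Hilbert--Schmidt justification for the compactness of $R_E$; this is exactly the detail the paper leaves implicit.
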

\begin{proof}
First, let $f$ be continuous on $\Omega$ with compact support. Then, $\pph M_{\chi_{\operatorname{supp} f}}$ is a compact operator, hence $T_f^{\text{ph}, \lambda}$ is compact. If $f \in C_0(\Omega)$, take a sequence $f_n$ from $C_c(\Omega)$ which converges to $f$ with respect to $\| \cdot \|_\infty$. Then, $T_{f_n}^{\text{ph}, \lambda}$ converges to $T_f^{\text{ph}, \lambda}$ in norm, hence the operator is also compact.
\end{proof}
\begin{lmm}\label{hankelcomp}
If $f \in \VMO_\partial(\Omega)$, then $H_f^\lambda$ is compact.
\end{lmm}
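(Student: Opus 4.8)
The plan is to reduce the compactness of $H_f^\lambda$ for $f \in \VMO_\partial(\Omega)$ to the already-established fact (Theorem \ref{thm1}) that Hankel operators with symbols in $\VO(\Omega) \subseteq \BUC(\Omega)$ behave well, together with a localization/limit-operator argument. Recall that $\VMO_\partial(\Omega)$ contains $\VO(\Omega)$, but is strictly larger; the standard structural fact I would invoke is that every $f \in \VMO_\partial(\Omega)$ can be written (modulo $C_0(\Omega)$, or at least approximated) as $f = g + h$ where $g \in \VO(\Omega)$ captures the boundary behavior and the remainder is controlled. In fact I expect the cleanest route is: it is known (cf. \cite{Bauer2005, Bekolle_Berger_Coburn_Zhu1990}, or via the Berezin transform) that $\mathcal B_\lambda(f) \in \VO(\Omega)$ whenever $f \in \VMO_\partial(\Omega)$, and that $f - \mathcal B_\lambda(f)$ produces a compact Toeplitz/Hankel contribution. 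So the first step is to record this decomposition precisely.

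Concretely, I would proceed as follows. First, write $\tilde f := \mathcal B_\lambda(f)$ for the holomorphic Berezin transform; one checks $\tilde f \in \VO(\Omega)$ and that $\operatorname{MO}_\lambda(f) \to 0$ at the boundary forces $H_{f-\tilde f}^\lambda$ to be compact — indeed $\|H_{f-\tilde f}^\lambda k^\lambda(\cdot,z)\|_\lambda^2 \le \operatorname{MO}_\lambda(f,z) \to 0$ as $d(z,0)\to\infty$, and combined with the fact that the normalized kernels $k^\lambda(\cdot,z)$ tend weakly to $0$ as $z \to \partial\Omega$, a standard argument (approximating $H_{f-\tilde f}^\lambda$ by a net of operators that are ``supported'' near the boundary, each of which is compact away from the boundary because the symbol is then essentially bounded on a compact set) yields compactness. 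Second, for the $\VO(\Omega)$ part $\tilde f$, note $\tilde f \in \BUC(\Omega)$ and $\operatorname{Osc}(\tilde f)(z) \to 0$ at the boundary; one covers $\Omega$ by the part where $d(z,0) \le R$ (on which $H_{\tilde f}^\lambda$ composed with the projection onto functions concentrated there is compact) and the part near the boundary, where the oscillation is small, so that $H_{\tilde f}^\lambda$ is approximated in norm by compact operators plus an operator of small norm — letting $R \to \infty$ gives compactness. Finally, assemble: $H_f^\lambda = H_{\tilde f}^\lambda + H_{f - \tilde f}^\lambda$ is a sum of two compact operators, hence compact.

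The main obstacle, and the step requiring the most care, is the passage from the pointwise-on-kernels decay $\|H_f^\lambda k^\lambda(\cdot,z)\|_\lambda \to 0$ (equivalently $\operatorname{MO}_\lambda(f,z)\to 0$) at the boundary to an actual norm statement, i.e. to compactness of $H_f^\lambda$ itself rather than merely of $\mathcal B_\lambda^{\mathrm{ph}}(\text{related operators})$. This is the familiar gap between ``Berezin transform vanishes at the boundary'' and ``operator is compact'', which in general fails but holds here because Hankel operators with $\VMO_\partial$ symbols lie in the Toeplitz algebra where the Berezin transform detects compactness — I would invoke the known characterization of compact operators in the relevant Toeplitz $C^*$-algebra over $\ahol$ (from \cite{Hagger2017, Fulsche_Hagger} or \cite{Bauer2005}) via vanishing of the Berezin transform at the boundary, applied to $A = (H_f^\lambda)^* H_f^\lambda = T_{|f|^2}^\lambda - (T_{\bar f}^\lambda)^* T_f^\lambda \cdots$, wait — more simply, to $(H_f^\lambda)^*H_f^\lambda = T_{|f|^2}^\lambda - (T_f^\lambda)^*T_f^\lambda$, whose Berezin transform at $z$ is exactly $\operatorname{MO}_\lambda(f,z) \to 0$; compactness of this positive operator then gives compactness of $H_f^\lambda$. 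Everything else — the reduction via $\tilde f$, the $C_0$ approximation, the boundedness bounds — is routine once this characterization is in hand.
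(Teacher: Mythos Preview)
The paper does not give a proof here; it simply cites the classical results (\cite[Theorem 5.3]{Bauer2005} for the Fock case, \cite[Theorem B]{Bekolle_Berger_Coburn_Zhu1990} for bounded symmetric domains) and remarks that the arguments carry over to general weights $\lambda$. Your proposal, by contrast, attempts a self-contained argument, so it is necessarily taking a different route.

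Two issues with your route. First, the opening plan to ``reduce to Theorem \ref{thm1}'' is misdirected: Theorem \ref{thm1} is an asymptotic statement ($\|H_f^\lambda\|_\lambda \to 0$ as $\lambda \to \infty$, for $f \in \UC$ or $f \in \VMO_b$) and says nothing about compactness of $H_f^\lambda$ for any \emph{fixed} $\lambda$. It cannot serve as the reduction you describe, and note also the symbol class there is $\VMO_b$, not $\VMO_\partial$.

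Second, the argument you converge on in the final paragraph --- observe that $(H_f^\lambda)^*H_f^\lambda = T_{|f|^2}^\lambda - T_{\overline f}^\lambda T_f^\lambda$ lies in the Toeplitz algebra and has Berezin transform $\|H_f^\lambda k^\lambda(\cdot,z)\|_\lambda^2 \leq \operatorname{MO}_\lambda(f,z) \to 0$, then invoke ``vanishing Berezin transform at the boundary $\Rightarrow$ compact'' --- is correct in principle, and you rightly flag the passage from pointwise-on-kernels decay to compactness as the crux. But be careful about circularity: in \cite{Bauer2005} the compactness characterization you want is proved \emph{using} the Hankel result (indeed Theorem 5.3 there is essentially the present lemma), and \cite[Theorem B]{Bekolle_Berger_Coburn_Zhu1990} \emph{is} the present lemma. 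The non-circular sources would be the Berezin-transform compactness criteria of the type in \cite{Bauer_Isralowitz2012} or \cite{Englis1999}; if you verify that those proofs do not themselves rely on Hankel compactness for $\VMO_\partial$ symbols, your argument closes. Otherwise you are back to needing the direct estimates in the original references --- which is exactly what the paper cites.
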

\begin{proof}
Cf. \cite[Theorem 5.3]{Bauer2005} for the case of Segal-Bargmann spaces with $\lambda = 1$ and \cite[Theorem B]{Bekolle_Berger_Coburn_Zhu1990} for the case of unweighted Bergman spaces on a bounded symmetric domain. The proofs work analogously for the standard weighted cases with general $\lambda$.
\end{proof}
\begin{lmm}\label{compvmo}
If $f \in \VMO_\partial(\Omega)$, then $\mathcal B_\lambda(f) \in C_0(\Omega)$ implies compactness of $T_f^{\emph{\text{ph}}, \lambda}$.
\end{lmm}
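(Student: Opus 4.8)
The plan is to use the matrix decomposition \eqref{decom} of $T_f^{\text{ph},\lambda}$ with respect to $\aph = \ahol \bigoplus (\aah \ominus \acons)$ and show that each of the four blocks is compact. The key observation is that compactness of the pluriharmonic Toeplitz operator reduces to compactness of the holomorphic Toeplitz operator $T_f^\lambda$ together with the smallness/compactness of the off-diagonal pieces. Concretely, the off-diagonal blocks $A_f^\lambda$ and $B_f^\lambda$ and the corner block $T_f^{\text{ah}\ominus\mathbb C,\lambda}$ all need to be handled; the diagonal block $T_f^\lambda$ is where the hypothesis $\mathcal B_\lambda(f) \in C_0(\Omega)$ enters.

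First I would establish that, for $f \in \VMO_\partial(\Omega)$, the condition $\mathcal B_\lambda(f) \in C_0(\Omega)$ implies $T_f^\lambda$ is compact on $\ahol$. This is a standard fact in the holomorphic setting: by Lemma \ref{hankelcomp}, $H_f^\lambda$ and $H_{\overline f}^\lambda$ are compact, so by the Hankel relation \eqref{hankelrel} one has $T_f^\lambda T_g^\lambda - T_{fg}^\lambda$ compact and more relevantly $(T_f^\lambda)^\ast T_f^\lambda - T_{|f|^2}^\lambda = T_{\overline f}^\lambda T_f^\lambda - T_{\overline f f}^\lambda = -(H_f^\lambda)^\ast H_f^\lambda$ is compact; combined with $T_{|f|^2}^\lambda - T_{(\mathcal B_\lambda f)\overline{(\mathcal B_\lambda f)}}^\lambda$-type arguments and the fact that $\mathcal B_\lambda(f) \in C_0$ forces the Berezin transform of $T_f^\lambda$ to vanish at the boundary, one concludes $T_f^\lambda$ lies in the (unital) $C^\ast$-algebra generated by Toeplitz operators with $\VMO_\partial$ symbols modulo compacts and has vanishing boundary Berezin transform, hence is compact. (I would cite \cite{Bekolle_Berger_Coburn_Zhu1990} or \cite{Bauer2005} for the precise statement that in this $C^\ast$-algebra, vanishing boundary Berezin transform characterizes the compacts.) The same argument applied via the conjugation $U: f \mapsto \overline f$ gives that $T_f^{\text{ah},\lambda}$ is compact on $\aah$, and restricting to $\aah \ominus \acons$ (a finite-codimension adjustment, as $\acons$ is one-dimensional) shows $T_f^{\text{ah}\ominus\mathbb C,\lambda}$ is compact.

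Next I would treat the off-diagonal blocks $A_f^\lambda = \phol M_f : \aah\ominus\acons \to \ahol$ and $B_f^\lambda = (\pah - \pcons)M_f : \ahol \to \aah\ominus\acons$. Here the point is that $A_f^\lambda$ essentially pairs a holomorphic and an anti-holomorphic function against $f$; writing $B_f^\lambda = (\pah - \pcons)H_f^\lambda$ exactly as in the proof of Lemma \ref{lmm4} shows $B_f^\lambda$ is compact since $H_f^\lambda$ is compact by Lemma \ref{hankelcomp}. For $A_f^\lambda$ the decomposition from Lemma \ref{lmm4}, $A_f^\lambda = (P^\lambda - P_{\mathbb C}^\lambda)H_f^{\text{ah},\lambda}|_{\aah\ominus\acons} + G_f^\lambda$, reduces matters to compactness of $H_f^{\text{ah},\lambda}$ (which follows from Lemma \ref{hankelcomp} via Corollary~\ref{cor2}'s conjugation identity $UH_f^{\text{ah},\lambda}U = H_{\overline f}^\lambda$, noting that $f \in \VMO_\partial$ iff $\overline f \in \VMO_\partial$) and of the finite-rank-ish operator $G_f^\lambda = \pcons M_f : \aah\ominus\acons \to \acons$, which maps into the one-dimensional space $\acons$ and is therefore automatically compact. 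Assembling the four blocks, $T_f^{\text{ph},\lambda}$ is compact.

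The main obstacle I anticipate is the first step: carefully justifying that $\mathcal B_\lambda(f) \in C_0(\Omega)$ together with $f \in \VMO_\partial(\Omega)$ forces $T_f^\lambda$ compact on the holomorphic side. This requires invoking the structure theory of the Toeplitz $C^\ast$-algebra over $\VMO_\partial$ symbols — namely that a Toeplitz operator in this algebra is compact exactly when its Berezin transform vanishes at the boundary — which is exactly the content one extracts from \cite{Bekolle_Berger_Coburn_Zhu1990, Bauer2005}; the remaining bookkeeping with the off-diagonal and corner blocks is routine given Lemmas \ref{lmmcomp}–\ref{hankelcomp} and the techniques already used in Lemma \ref{lmm4}.
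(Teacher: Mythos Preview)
Your proposal is correct and follows essentially the same route as the paper: the matrix decomposition \eqref{decom}, compactness of the off-diagonal blocks $A_f^\lambda$, $B_f^\lambda$ via the Hankel-operator factorizations from Lemma~\ref{lmm4} together with Lemma~\ref{hankelcomp}, compactness of $T_f^{\text{ah}\ominus\mathbb C,\lambda}$ via the decomposition \eqref{ahdecomp}, and compactness of $T_f^\lambda$ by citing the holomorphic literature. The only cosmetic differences are that the paper cites \cite{Bauer_Isralowitz2012, Englis1999} directly for the compactness of $T_f^\lambda$ rather than sketching a $C^\ast$-algebra argument via \cite{Bekolle_Berger_Coburn_Zhu1990, Bauer2005}, and your observation that $G_f^\lambda$ has one-dimensional range (hence is finite rank) is a nice shortcut the paper does not spell out.
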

\begin{proof}
Consider the matrix representation in equation (\ref{decom}). $H_f^\lambda$ is compact by Lemma \ref{hankelcomp}, hence $A_f^\lambda$ and $B_f^\lambda$, the off-diagonal operators in the matrix representation, are compact by the representations in the proof of Lemma \ref{lmm4}. Compactness of $T_f^\lambda$ follows as usual under the given assumptions (cf. \cite[Theorem 1.1]{Bauer_Isralowitz2012} and \cite[Theorem A] {Englis1999} for more general results on the Segal-Bargmann space and bounded symmetric domains), compactness of $T_f^{\text{ah} \ominus \mathbb{C}, \lambda}$ follows from the decomposition in equation (\ref{ahdecomp}) and the compactness of $T_f^{\text{ah}, \lambda}$.
\end{proof}
\begin{lmm}\label{semicommcomp}
For $f \in \VMO_\partial(\Omega)$ and $g \in L^\infty(\Omega)$, $T_f^{\emph{\text{ph}}, \lambda} T_g^{\emph{\text{ph}}, \lambda} - T_{fg}^{\emph{\text{ph}}, \lambda}$ and $T_g^{\emph{\text{ph}}, \lambda} T_f^{\emph{\text{ph}}, \lambda} - T_{gf}^{\emph{\text{ph}}, \lambda}$ are compact.
\end{lmm}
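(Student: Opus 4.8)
The plan is to reduce the statement to the compactness of a single ``pluriharmonic Hankel operator''. Set $H_f^{\text{ph},\lambda} := (I - \pph)M_f : \aph \to (\aph)^\perp$. Repeating the computation behind relation (\ref{hankelrel}) verbatim with $\pph$ in place of $\phol$ (note that $\pph$ is selfadjoint) yields
\[ T_f^{\text{ph},\lambda} T_g^{\text{ph},\lambda} - T_{fg}^{\text{ph},\lambda} = -(H_{\overline f}^{\text{ph},\lambda})^\ast H_g^{\text{ph},\lambda}, \qquad T_g^{\text{ph},\lambda} T_f^{\text{ph},\lambda} - T_{gf}^{\text{ph},\lambda} = -(H_{\overline g}^{\text{ph},\lambda})^\ast H_f^{\text{ph},\lambda}. \]
Since $g \in L^\infty(\Omega)$, both $H_g^{\text{ph},\lambda}$ and $(H_{\overline g}^{\text{ph},\lambda})^\ast$ are bounded, so it suffices to show that $H_f^{\text{ph},\lambda}$ is compact whenever $f \in \VMO_\partial(\Omega)$ (applying this once to $f$ and once to $\overline f$). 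The point to keep in mind is that the semi-commutators must be factored so that the \emph{compact} factor carries the symbol $f$ while the merely bounded factor carries $g$.

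For this I first record two elementary facts. (i) $\VMO_\partial(\Omega)$ is invariant under complex conjugation, since $\operatorname{MO}_\lambda(\overline f) = \operatorname{MO}_\lambda(f)$ (the Berezin transform commutes with conjugation and $|\overline f|^2 = |f|^2$); in particular $\overline f \in \VMO_\partial(\Omega)$. (ii) For $f \in \VMO_\partial(\Omega)$ the anti-holomorphic Hankel operator $H_f^{\text{ah},\lambda}$ is compact: by the identity $U H_f^{\text{ah},\lambda} U = H_{\overline f}^\lambda$ from the proof of Corollary \ref{cor2} (with $U$ the conjugation) and Lemma \ref{hankelcomp} applied to $\overline f \in \VMO_\partial(\Omega)$, compactness transfers from $H_{\overline f}^\lambda$ to $H_f^{\text{ah},\lambda}$.

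Now decompose $\aph = \ahol \oplus (\aah \ominus \acons)$. With respect to this decomposition $H_f^{\text{ph},\lambda}$ has the row-block form $(C_f^{1,\lambda}, C_f^{2,\lambda})$, and by the representations $C_f^{1,\lambda} = (I-\pph)H_f^\lambda$ and $C_f^{2,\lambda} = (I-\pph)H_f^{\text{ah},\lambda}|_{\aah\ominus\acons}$ from the proof of Lemma \ref{lmm4}, both blocks are compact --- the first by Lemma \ref{hankelcomp}, the second by fact (ii). Hence $H_f^{\text{ph},\lambda}$ is compact, and the first paragraph finishes the proof. Equivalently, one could argue through the matrix representation (\ref{semicomdec}) exactly as in the proof of Proposition \ref{quant2}, replacing ``norm $\to 0$'' by ``compact'' throughout and using that, for a symbol in $\VMO_\partial(\Omega)$, all of $A_f^\lambda, B_f^\lambda, C_f^{1,\lambda}, C_f^{2,\lambda}, D_f^{1,\lambda}, D_f^{2,\lambda}, E_f^\lambda, G_f^\lambda$ are compact by their representations in the proof of Lemma \ref{lmm4} together with (i) and (ii).

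I do not foresee a genuine obstacle here: given Lemma \ref{hankelcomp}, the argument is the exact ``compact'' analogue of the semi-commutator estimates already carried out in Section \ref{sec:quant}. The only substantive points are the pluriharmonic Hankel identity of the first paragraph and facts (i)--(ii), all of which are routine.
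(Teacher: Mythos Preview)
Your proof is correct. The paper's own argument is exactly your ``equivalently'' alternative: it invokes the matrix representation (\ref{semicomdec}) and the factorizations from the proofs of Proposition \ref{quant2} and Lemma \ref{lmm4}, then observes that each block is compact because $H_f^\lambda$ and $H_{\overline f}^\lambda$ are compact by Lemma \ref{hankelcomp}. Your primary route---introducing the pluriharmonic Hankel operator $H_f^{\text{ph},\lambda}$, deriving the analogue of (\ref{hankelrel}) for $\pph$, and reducing to compactness of its two blocks $C_f^{1,\lambda}$, $C_f^{2,\lambda}$---is a cleaner repackaging of the same ingredients: it avoids tracking the four matrix entries separately and makes the factorization ``compact factor carries $f$, bounded factor carries $g$'' transparent. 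The trade-off is that the paper's block-by-block argument reuses the computations already done in Proposition \ref{quant2} verbatim, whereas your version requires stating the pluriharmonic Hankel identity once. Either way the substantive content is identical: compactness of $H_f^\lambda$ and $H_f^{\text{ah},\lambda}$ for $f\in\VMO_\partial(\Omega)$, together with the structural representations of Lemma \ref{lmm4}.
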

\begin{proof}
For $f \in \VMO_\partial(\Omega)$, the Hankel operators $H_f^\lambda$ and $H_{\overline{f}}^\lambda$ are compact  (again Lemma \ref{hankelcomp}). By the representations of $T_f^{\text{ph}, \lambda} T_g^{\text{ph}, \lambda} - T_{fg}^{\text{ph}, \lambda}$ and $T_g^{\text{ph}, \lambda} T_f^{\text{ph}, \lambda} - T_{gf}^{\text{ph}, \lambda}$ in Proposition \ref{quant2} and Lemma \ref{lmm4}, the operators are compact.
\end{proof}
\begin{prop}\label{essspec}
Let $f \in \VO(\Omega)$. Then, $T_f^{\emph{\text{ph}}, \lambda}$ is Fredholm if and only if there are constants $R, c > 0$ such that $|f(z)| \geq c$ for all $z \in \Omega$ with $d(z, 0) \geq R$.
\end{prop}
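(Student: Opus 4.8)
The plan is to reduce the Fredholm question for $T_f^{\text{ph},\lambda}$ to the well-understood Fredholm theory of $T_f^\lambda$ on the holomorphic Bergman space, exploiting the matrix decomposition (\ref{decom}) and the compactness results of Lemmata \ref{lmmcomp}--\ref{semicommcomp}. First I would note that for $f \in \VO(\Omega)$ we have $f \in \VMO_\partial(\Omega)$, so by Lemma \ref{lmm4} (and the representations of $A_f^\lambda, B_f^\lambda$ in terms of Hankel operators, which are compact by Lemma \ref{hankelcomp}) the off-diagonal entries of the matrix (\ref{decom}) are compact. Hence modulo compact operators $T_f^{\text{ph},\lambda}$ is block-diagonal, equal to $T_f^\lambda \oplus T_f^{\text{ah}\ominus\mathbb{C},\lambda}$, and by (\ref{ahdecomp}) together with compactness of the off-diagonal entries $E_f^\lambda, G_f^\lambda$, the latter is in turn equivalent modulo compacts to $T_f^{\text{ah},\lambda} \ominus (P_\mathbb{C}^\lambda M_f)|_{\acons}$; the last piece is finite rank. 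Using the conjugation $U$ from the proof of Corollary \ref{cor2}, $T_f^{\text{ah},\lambda}$ is unitarily equivalent to $T_{\overline{f}}^\lambda$. So $T_f^{\text{ph},\lambda}$ is Fredholm if and only if both $T_f^\lambda$ and $T_{\overline f}^\lambda$ are Fredholm on $\ahol$.

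Next I would invoke the known Fredholm criterion for $T_f^\lambda$ on the holomorphic Bergman (or Segal-Bargmann) space with $\VO$-symbol: since $f \in \VO(\Omega) \subseteq \VMO_\partial(\Omega)$, the semi-commutators $T_f^\lambda T_g^\lambda - T_{fg}^\lambda$ are compact (Lemma \ref{semicommcomp} restricted to the holomorphic setting, or directly via Lemma \ref{hankelcomp}), so $f \mapsto T_f^\lambda + \mathcal{K}$ is a homomorphism on $\VO(\Omega)$. In particular $T_f^\lambda T_{1/f}^\lambda - I$ and $T_{1/f}^\lambda T_f^\lambda - I$ are compact whenever $1/f$ makes sense as a bounded symbol, i.e. whenever $|f|$ is bounded below near the boundary; conversely, if $T_f^\lambda$ is Fredholm then $\mathcal{B}_\lambda(f)$, which agrees with $f$ up to a function vanishing at the boundary (as $f\in\VO$), cannot tend to $0$ along any sequence approaching the boundary, forcing the lower bound $|f(z)| \geq c$ for $d(z,0) \geq R$. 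The key observation making the anti-holomorphic factor automatic is that $|f| = |\overline f|$, so the lower-bound condition for $f$ is literally the same as for $\overline f$; thus $T_f^\lambda$ Fredholm $\iff$ $T_{\overline f}^\lambda$ Fredholm $\iff$ the stated condition on $|f|$ holds.

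Assembling these: if the lower bound holds, pick $\chi \in \VO(\Omega)$ with $\chi = f$ for $d(z,0)$ large and $\chi$ bounded below everywhere; then $f - \chi \in C_0(\Omega)$, so $T_{f-\chi}^{\text{ph},\lambda}$ is compact by Lemma \ref{lmmcomp}, and $T_{1/\chi}^{\text{ph},\lambda}$ provides a Fredholm inverse modulo compacts using Lemma \ref{semicommcomp} applied to the product symbols $\chi \cdot (1/\chi) = 1$. Conversely, if $T_f^{\text{ph},\lambda}$ is Fredholm, then by the block reduction above $T_f^\lambda$ is Fredholm, and since $\mathcal{B}_\lambda^{\text{ph}}(f) \to f$ at the boundary in the sense that $f - \mathcal{B}_\lambda(f) \in C_0$ for $f \in \VO$, a failure of the lower bound would produce, via a Weyl sequence of normalized reproducing kernels $k_{\text{ph}}^\lambda(\cdot, z_j)$ with $d(z_j,0)\to\infty$, a sequence on which $T_f^{\text{ph},\lambda}$ acts with norm tending to $\liminf |f(z_j)| = 0$, contradicting Fredholmness. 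The main obstacle I anticipate is the careful bookkeeping in the block reduction — verifying that the finite-rank correction coming from the $\acons$-summand and the precise passage from $T_f^{\text{ah}\ominus\mathbb{C},\lambda}$ back to $T_{\overline f}^\lambda$ do not change Fredholmness — together with citing the correct form of the holomorphic $\VO$-Fredholm criterion uniformly for both $\mathbb{C}^n$ and general bounded symmetric domains; the Weyl-sequence lower-bound argument and the inverse construction are routine once the symbol calculus modulo compacts is in place.
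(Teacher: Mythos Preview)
Your proposal is correct, and the sufficiency direction is essentially the paper's argument (you build a regularizer $T_{1/\chi}^{\text{ph},\lambda}$ via Lemma~\ref{semicommcomp} and Lemma~\ref{lmmcomp}, the paper builds $g$ with $g=1/f$ outside $E(0,R)$ and does the same thing).

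For the necessity direction you take a genuinely different route. You argue that, since the off-diagonal blocks $A_f^\lambda, B_f^\lambda, E_f^\lambda, G_f^\lambda$ are compact for $f\in\VO(\Omega)\subset\VMO_\partial(\Omega)$, the operator $T_f^{\text{ph},\lambda}$ is Fredholm if and only if both $T_f^\lambda$ and $T_{\overline f}^\lambda$ are Fredholm on $\ahol$, and then invoke the known holomorphic $\VO$-Fredholm criterion. This is valid and conceptually clean: it reduces the pluriharmonic statement to the established holomorphic one. The paper instead works directly inside $\aph$: it takes the \emph{holomorphic} normalized kernels $k^\lambda(\cdot,z_j)\in\ahol\subset\aph$ as a Weyl sequence, uses compactness of $B_f^\lambda$ to discard the anti-holomorphic contribution, and then estimates $\|T_f^\lambda k^\lambda(\cdot,z_j)\|$ via the holomorphic Berezin transform $\mathcal B_\lambda(|f|^2)$ together with the fact that $f-\mathcal B_\lambda(f)\in C_0(\Omega)$ for $f\in\VO$. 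The paper's approach is self-contained (it does not quote the holomorphic Fredholm theorem as a black box), while yours is shorter once that theorem is granted. One small point: in your final paragraph you also mention a Weyl sequence of \emph{pluriharmonic} kernels $k_{\text{ph}}^\lambda(\cdot,z_j)$; this is redundant given your block reduction, and would need a separate argument that $\mathcal B_\lambda^{\text{ph}}(|f|^2)(z_j)\to 0$, which the paper never establishes for fixed $\lambda$. You should drop that sentence and rely solely on the block reduction plus the holomorphic criterion.
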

\begin{proof}
First, assume that $|f(z)| \geq c$ for $d(z,0) \geq R$. Let $g$ be continuous with $g(z) = \frac{1}{f(z)}$ for $d(z,0) \geq R$. Then, $g \in L^\infty(\Omega)$ and  $fg - 1$ vanishes on $E(0,R)^c$. In particular, $T_{fg - 1}^{\text{ph}, \lambda}$ is compact by Lemma \ref{lmmcomp}. Therefore, also
\[ T_f^{\text{ph}, \lambda} T_g^{\text{ph}, \lambda} - I = T_f^{\text{ph}, \lambda} T_g^{\text{ph}, \lambda} - T_{fg}^{\text{ph}, \lambda} + T_{fg - 1}^{\text{ph}, \lambda} \]
is compact by Lemma \ref{semicommcomp}. Analogously, $T_g^{\text{ph}, \lambda} T_f^{\text{ph}, \lambda} - I$ is compact. Hence, $T_f^{\text{ph}, \lambda}$ is Fredholm.

On the other hand, assume that $T_f^{\text{ph}, \lambda}$ is Fredholm and that there is a sequence $(z_j)$ in $\Omega$, $z_j \to \partial \Omega$, such that $f(z_j) \to 0$. Since $T_f^{\text{ph}, \lambda}$ is Fredholm, there is a bounded operator $A \in \mathcal{L}(\aph)$ such that $AT_f^{\text{ph}, \lambda} - I$ is compact. Thus,
\[ \| (AT_f^{\text{ph}, \lambda} - I)k^\lambda(\cdot, z_j) \|_\lambda \to 0, \quad j \to \infty, \]
where $k^\lambda$ is the normalized (holomorphic) reproducing kernel, and $k^\lambda(\cdot, z_j)$ converges weakly to $0$ as $j \to \infty$ (even in $\aph$). This implies
\begin{equation}\label{equation7}
\| AT_f^{\text{ph}, \lambda} k^\lambda(\cdot, z_j)\|_\lambda \to 1, \quad j \to \infty.
\end{equation}
We also know that
\[ T_f^{\text{ph}, \lambda} k^\lambda(\cdot, z_j) = T_f^\lambda k^\lambda(\cdot, z_j) + B_f^\lambda k^\lambda(\cdot, z_j) \]
since $k^\lambda(\cdot, z_j) \in \ahol$. $B_f^\lambda$ is compact, hence $\|B_f^\lambda k^\lambda(\cdot, z_j)\|_\lambda \to 0$ for $j \to \infty$. Finally, we will show that $\| T_f^\lambda k^\lambda(\cdot, z_j)\|_\lambda \to 0$ as $j \to \infty$, which will give a contradiction to (\ref{equation7}).

It is
\[ \| T_f^\lambda k^\lambda(\cdot, z_j) \|^2 \leq \langle |f|^2 k^\lambda(\cdot, z_j), k^\lambda(\cdot, z_j) \rangle = \mathcal B_\lambda(|f|^2)(z_j). \]
Since $f \in \VO(\Omega)$, it is $f - \mathcal B_\lambda(f) \in C_0(\Omega)$ \cite{Bauer2005, Bekolle_Berger_Coburn_Zhu1990}, hence $f(z_j) - \mathcal B_\lambda(f)(z_j) \to 0$ and thus $\mathcal B_\lambda(f)(z_j) \to 0$. Therefore, it suffices to show that
\begin{equation}\label{equation8}
\mathcal B_\lambda(|f|^2 - f)(z_j) \to 0.
\end{equation}
But $|f|^2 - f \in \VO(\Omega)$ as well (since $\VO(\Omega)$ is an algebra which is closed under complex conjugation), and $|f(z_j)|^2 - f(z_j) \to 0$, hence (\ref{equation8}) follows.
\end{proof}
\begin{cor}\label{cor13}
For $f \in \VMO_\partial (\Omega)$ it holds
\[ \sigma_{ess}(T_f^{\emph{\text{ph}}, \lambda}) = \mathcal B_{\lambda}(f)(\partial \Omega) := \bigcap_{R > 0} \mathcal B_{\lambda}(f)(E(0,R)^c). \]
If $f$ is even in $\VO(\Omega)$, then
\[ \sigma_{ess}(T_f^{\emph{\text{ph}}, \lambda}) = f(\partial \Omega) := \bigcap_{R > 0} f(E(0,R)^c). \]
\end{cor}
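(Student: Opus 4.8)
The plan is to reduce, via a compact perturbation, to a symbol in $\VO(\Omega)$ and then invoke the Fredholm criterion of Proposition \ref{essspec}. Throughout I use that $\sigma_{ess}(A) = \{\mu \in \mathbb{C} : A - \mu I \text{ is not Fredholm}\}$ and that Fredholmness is stable under compact perturbation, so that $\sigma_{ess}$ is a compact-perturbation invariant.

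First I would handle the general case $f \in \VMO_\partial(\Omega)$, the point being that $T_f^{\text{ph},\lambda} - T_{\mathcal B_\lambda(f)}^{\text{ph},\lambda}$ is compact. By linearity of $g \mapsto T_g^{\text{ph},\lambda}$ this operator equals $T_{g}^{\text{ph},\lambda}$ with $g := f - \mathcal B_\lambda(f)$, and I want to apply Lemma \ref{compvmo} to $g$. Here I use two facts available from \cite{Bauer2005, Bekolle_Berger_Coburn_Zhu1990} (and already used in the proof of Proposition \ref{essspec}): $\mathcal B_\lambda$ maps $\VMO_\partial(\Omega)$ into $\VO(\Omega)$, and $h - \mathcal B_\lambda(h) \in C_0(\Omega)$ for every $h \in \VO(\Omega)$. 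The first gives $\mathcal B_\lambda(f) \in \VO(\Omega) \subseteq \VMO_\partial(\Omega)$, hence $g \in \VMO_\partial(\Omega)$; applying the second to $h = \mathcal B_\lambda(f)$ gives $\mathcal B_\lambda(g) = \mathcal B_\lambda(f) - \mathcal B_\lambda(\mathcal B_\lambda(f)) \in C_0(\Omega)$. Lemma \ref{compvmo} then yields that $T_g^{\text{ph},\lambda}$ is compact, so $\sigma_{ess}(T_f^{\text{ph},\lambda}) = \sigma_{ess}(T_{\mathcal B_\lambda(f)}^{\text{ph},\lambda})$.

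Next, fix $\mu \in \mathbb{C}$. Since $\VO(\Omega)$ contains the constants, $\mathcal B_\lambda(f) - \mu \in \VO(\Omega)$, and by the previous step $T_f^{\text{ph},\lambda} - \mu I$ is Fredholm if and only if $T_{\mathcal B_\lambda(f) - \mu}^{\text{ph},\lambda}$ is. By Proposition \ref{essspec} the latter holds precisely when there exist $R, c > 0$ with $|\mathcal B_\lambda(f)(z) - \mu| \geq c$ for all $z$ with $d(z,0) \geq R$. Negating, $\mu \in \sigma_{ess}(T_f^{\text{ph},\lambda})$ exactly when for every $R > 0$ and every $c > 0$ there is $z \in E(0,R)^c$ with $|\mathcal B_\lambda(f)(z) - \mu| < c$; choosing such points diagonally along $R \to \infty$, $c \to 0$ shows this is equivalent to $\mu$ being a boundary cluster value of $\mathcal B_\lambda(f)$, i.e. $\mu \in \bigcap_{R>0}\overline{\mathcal B_\lambda(f)(E(0,R)^c)} = \mathcal B_\lambda(f)(\partial\Omega)$. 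This proves the first identity.

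For the second identity, assume moreover $f \in \VO(\Omega)$. Then $f - \mathcal B_\lambda(f) \in C_0(\Omega)$ vanishes at the boundary, so for any $z_j$ with $d(z_j,0) \to \infty$ one has $\mathcal B_\lambda(f)(z_j) \to \mu$ if and only if $f(z_j) \to \mu$; hence the boundary cluster sets of $f$ and of $\mathcal B_\lambda(f)$ coincide and $\sigma_{ess}(T_f^{\text{ph},\lambda}) = f(\partial\Omega)$. (Equivalently, one applies Proposition \ref{essspec} directly to $f - \mu \in \VO(\Omega)$.) The only non-routine ingredients are the two folklore facts about the Berezin transform cited in the second paragraph; given these and the matrix decomposition (\ref{decom}) already exploited in Lemmas \ref{lmm4}, \ref{compvmo} and \ref{semicommcomp}, the remainder is bookkeeping, so I do not expect a genuine obstacle beyond stating the boundary-cluster-set reformulation precisely.
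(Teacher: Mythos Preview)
Your proof is correct and follows essentially the same route as the paper: reduce the $\VMO_\partial$ case to the $\VO$ case by showing $T_{f-\mathcal B_\lambda(f)}^{\text{ph},\lambda}$ is compact via Lemma \ref{compvmo}, then read off the essential spectrum from Proposition \ref{essspec}. The only cosmetic difference is that the paper deduces $\mathcal B_\lambda(f-\mathcal B_\lambda(f))\in C_0(\Omega)$ from the cited fact $\mathcal B_\lambda(|f-\mathcal B_\lambda(f)|^2)\in C_0(\Omega)$, whereas you obtain it by applying $h-\mathcal B_\lambda(h)\in C_0(\Omega)$ to $h=\mathcal B_\lambda(f)\in\VO(\Omega)$; both arguments rest on the same references \cite{Bauer2005, Bekolle_Berger_Coburn_Zhu1990}.
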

\begin{proof}
The statement for $f \in \VO(\Omega)$ follows directly from the last proposition. If $f \in \VMO_\partial(\Omega)$, then $\mathcal B_\lambda(|f - B_\lambda(f)|^2) \in C_0(\Omega)$ and $\mathcal B_\lambda(f) \in \VO(\Omega)$ (\cite[Theorem B]{Bekolle_Berger_Coburn_Zhu1990} for bounded symmetric domains, \cite[Theorem 5.3]{Bauer2005} for the Segal-Bargmann space). In particular, also $B_\lambda(f - \mathcal B_\lambda(f)) \in C_0(\Omega)$ holds, therefore $T_{f - \mathcal B_\lambda(f)}^{\text{ph}, \lambda}$ is compact by Lemma \ref{compvmo}. Thus,
\[ \sigma_{ess}(T_f^{\text{ph}, \lambda}) = \sigma_{ess}(T_{\mathcal B_\lambda(f)}^{\text{ph}, \lambda}). \]
\end{proof}
\section{Spectral theory through quantization effects}\label{sec:specdecomp}
In \cite{Bauer_Hagger_Vasilevski2018, Bauer_Vasilevski2017}, results on the essential spectra for Toeplitz operators on $\mathcal{A}_\lambda^2(\mathbb{B}^n)$ with symbols of certain product structures were obtained. A crucial tool for this was the fact that the quantization property (\ref{eq:2}) holds for a sufficiently large class of symbols. The aim of this section is to use a similar construction and apply quantization results from Section \ref{sec:quant} to derive spectral results for Toeplitz operators on different Bergman spaces. For simplicity, we will only deal with the case $n=2$ as in \cite{Bauer_Vasilevski2017}, the generalization to $n>2$ follows exactly the computations in \cite{Bauer_Hagger_Vasilevski2018}. Further, we will not deal with symbols of the general product structure allowed in \cite{Bauer_Vasilevski2017}. This has the advantage that we can avoid the use of representation theory to obtain the desired result on the essential spectrum directly. Nevertheless, it is possible without many changes to immitate the representation theoretic constructions to obtain the more general results as in \cite{Bauer_Vasilevski2017}.

Recall that an orthonormal basis for $\mathcal{A}_{\lambda, \text{ph}}^2 (\mathbb B^1)$ is given by the functions
\begin{align*}
e_{a}^\lambda(z) &= \sqrt{\frac{\Gamma(a + \lambda + 2)}{a! \Gamma(\lambda + 2)}} z^{a}, \quad z \in \mathbb B^1, ~ a \in \mathbb{N}_0
\intertext{and}
\overline{e}_{b}^\lambda(z) &= \sqrt{\frac{\Gamma(b + \lambda + 2)}{b! \Gamma(\lambda + 2)}} \overline{z}^b, \quad z \in \mathbb B^1, ~b \in \mathbb{N},
\end{align*}
that is
\[ \mathcal{A}_{\lambda, \text{ph}}^2(\mathbb{B}^1) = \overline{\operatorname{span}}\{ e_a^\lambda; a \in \mathbb{N}_0\} \oplus \overline{\operatorname{span}} \{ \overline{e}_b^\lambda; b \in \mathbb{N} \}. \]
We now introduce the Bergman spaces $\mathcal{A}_{\lambda, \text{ph-h}}^2(\mathbb B^2)$ as the closed subspace of $L_\lambda^2(\mathbb B^2)$ specified by the following orthonormal basis:
\[ \mathcal A_{\lambda, \text{ph-h}}^2(\mathbb B^2) := \overline{\operatorname{span}}\{ \mathfrak{e}_{(a_1,a_2)}^{\lambda,+}, \mathfrak{e}_{(b_1,b_2)}^{\lambda, -}; \quad (a_1, a_2) \in \mathbb{N}_0^2, (b_1, b_2) \in \mathbb{N} \times \mathbb{N}_0 \}\]
Here, the basis functions are defined by
\begin{align*}
\mathfrak{e}_{(a_1,a_2)}^{\lambda,+}(z) &= \sqrt{\frac{\Gamma(a_1 + a_2 + \lambda + 3)}{a_1!a_2! \Gamma(\lambda + 3)}}z_1^{a_1}z_2^{a_2},~ z= (z_1,z_2) \in \mathbb{B}^2, ~(a_1,a_2) \in \mathbb{N}_0^2
\intertext{and}
\mathfrak{e}_{(b_1,b_2)}^{\lambda,-}(z) &= \sqrt{\frac{\Gamma(b_1+b_2 + \lambda + 3)}{b_1!b_2! \Gamma(\lambda + 3)}}\overline{z_1}^{b_1}z_2^{b_2},~ z = (z_1,z_2) \in \mathbb{B}^2, ~ (b_1,b_2) \in \mathbb{N} \times \mathbb{N}_0.
\end{align*}
Thus, $\mathcal A_{\lambda, \text{ph-h}}^2(\mathbb B^2)$ consists of all $C^2$-functions $f$ on $\mathbb B^2$ such that
\begin{align*}
\frac{\partial^2 f}{\partial z_1 \partial \overline{z_1}} &= 0,\\
\frac{\partial f}{\partial \overline{z_2}} &= 0,
\end{align*}
that is, a function $f$ is in $\mathcal A_{\lambda, \text{ph-h}}^2(\mathbb B^2)$ if and only if it is (pluri-)harmonic in $z_1$ and holomorphic in $z_2$ (and square-integrable). In particular, each such function can be written as a power series converging on $\mathbb B^2$:
\begin{align*}
f(z_1,z_2) &= \sum_{j=0}^\infty \Big (\sum_{k = 0}^\infty c_{j,k}z_1^k z_2^j + \sum_{l = 1}^\infty d_{j,l} \overline{z}_1^l z_2^j \Big )\\
&= \sum_{j=0}^\infty \Big (\sum_{k = 0}^\infty c_{j,k}'\mathfrak{e}_{(k,j)}^{\lambda,+}(z) + \sum_{l = 1}^\infty d_{j,l}' \mathfrak{e}_{(l,j)}^{\lambda,-}(z) \Big ).
\end{align*}
Simple calculations yield
\begin{align*}
\mathfrak{e}_{(a_1,a_2)}^{\lambda,+}(z) = e_{a_1}^{a_2 + \lambda + 1}(z_1) e_{a_2}^{\lambda + 1}(z_2)
\intertext{and}
\mathfrak{e}_{(b_1,b_2)}^{\lambda,-}(z) = \overline{e}_{b_1}^{b_2 + \lambda + 1}(z_1) e_{b_2}^{\lambda + 1}(z_2).
\end{align*}
We define for $a_2 \in \mathbb N_0$
\begin{align*}
H_{a_2} := \overline{\text{span}} \{ \mathfrak{e}_{(a_1,a_2)}^{\lambda,+}, \mathfrak{e}_{(b_1,a_2)}^{\lambda,-}; ~ a_1 \in \mathbb{N}_0, b_1 \in \mathbb{N} \}
\end{align*}
and thus get a decomposition
\begin{equation}\label{decompphh}
\mathcal A_{\lambda, \text{ph-h}}^2(\mathbb B^2) = \bigoplus_{a_2 \in \mathbb{N}_0} H_{a_2}.
\end{equation}
One can easily see that each function $f \in H_{a_2}$ can be written in the form
\[ f(z) = f_{a_2}(z_1) e_{a_2}^{\lambda+1}(z_2) \]
for some unique $f_{a_2} \in \mathcal{A}_{a_2 + \lambda + 1, \text{ph}}^2(\mathbb B^1)$. Hence, we can write each function $f \in \mathcal A_{\lambda, \text{ph-h}}^2(\mathbb B^2)$ as a series
\begin{equation}\label{decompmod}
f(z_1,z_2) = \sum_{a_2 \in \mathbb{N}_0} f_{a_2}(z_1) e_{a_2}^{\lambda+1}(z_2)
\end{equation}
for unique $f_{a_2} \in \mathcal{A}_{ a_2 + \lambda + 1, \text{ph}}^2(\mathbb B^1)$ and further have
\[ \| f\|_{\mathcal A_{\lambda, \text{ph-h}}^2(\mathbb B^2)}^2 = \sum_{a_2 \in \mathbb{N}_0} \| f_{a_2}\|_{\mathcal{A}_{a_2 + \lambda + 1}^2(\mathbb B^1)}^2. \]
Letting $u_{a_2}: H_{a_2} \to \mathcal{A}_{a_2 + \lambda + 1, \text{ph}}^2(\mathbb B^1)$ act through
\[ u_{a_2}(f) = f_{a_2} \]
with $f_{a_2}$ the unique coefficient in the series (\ref{decompmod}), we get an isometric isomorphism
\[ U = \bigoplus_{a_2 \in \mathbb{N}_0} u_{a_2}: \mathcal A_{\lambda, \text{ph-h}}^2(\mathbb B^2) = \bigoplus_{a_2 \in \mathbb{N}_0} H_{a_2} \to \bigoplus_{a_2 \in \mathbb{N}_0} \mathcal{A}_{a_2 + \lambda + 1, \text{ph}}^2(\mathbb B^1). \]
For the remaining part of this section, we let $g \in L^\infty (\mathbb B^1)$ and set $\tilde g(z_1,z_2) = g(z_1)$. Let $P_{\text{ph-h}}^\lambda$ be the orthogonal projection $L_\lambda^2(\mathbb{B}^2) \to \mathcal A_{\lambda, \text{ph-h}}^2(\mathbb B^2)$ and consider the Toeplitz operator 
\[ T_{\tilde g}^{\text{ph-h}, \lambda} = P_{\text{ph-h}}^\lambda M_{\tilde g}: \mathcal A_{\lambda, \text{ph-h}}^2(\mathbb B^2) \to \mathcal A_{\lambda, \text{ph-h}}^2(\mathbb B^2). \]
Our last goal will be to prove the following fact:
\begin{prop}\label{propspecdecomp}
Let $g \in \VO(\mathbb B^1)$. Then, $T_{\tilde g}^{\emph{\text{ph-h}}, \lambda}$ is Fredholm if and only if there is some $c>0$ such that $|g(z_1)| \geq c$ for all $z_1 \in \mathbb{B}^1$. In particular, $\sigma_{ess}(T_{\tilde g}^{\emph{\text{ph-h}}, \lambda}) = \overline{g(\mathbb B^1)}$.
\end{prop}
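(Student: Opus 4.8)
The plan is to use the fiber decomposition (\ref{decompphh}) to diagonalize $T_{\tilde g}^{\text{ph-h},\lambda}$ into a sequence of pluriharmonic Toeplitz operators on $\mathbb B^1$ with the fixed symbol $g$ but growing weight parameter, and then to combine Proposition \ref{essspec} (Fredholmness for a fixed weight) with the second quantization property of Proposition \ref{quant2} (uniform invertibility as the weight grows).

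First I would establish the fiberwise structure: each $H_{a_2}$ is an invariant subspace of $T_{\tilde g}^{\text{ph-h},\lambda}$, and under the isometry $u_{a_2}\colon H_{a_2}\to\mathcal A_{a_2+\lambda+1,\text{ph}}^2(\mathbb B^1)$ the restriction $T_{\tilde g}^{\text{ph-h},\lambda}|_{H_{a_2}}$ corresponds to $T_g^{\text{ph},a_2+\lambda+1}$. Concretely, for $\phi\in\mathcal A_{a_2+\lambda+1,\text{ph}}^2(\mathbb B^1)$ I would expand $\tilde g\cdot\bigl(\phi(z_1)e_{a_2}^{\lambda+1}(z_2)\bigr)$ against the orthonormal basis $\{\mathfrak e_{(b_1,b_2)}^{\lambda,+},\mathfrak e_{(b_1,b_2)}^{\lambda,-}\}$; integrating out $z_2$ (substitution $z_2=\sqrt{1-|z_1|^2}\,w$) shows the pairing vanishes unless $b_2=a_2$ and, for $b_2=a_2$, equals, up to one fixed positive constant, the $L^2_{a_2+\lambda+1}(\mathbb B^1)$-pairing of $g\phi$ with the $z_1$-part of the basis element — and these $z_1$-parts run precisely through an orthonormal basis of $\mathcal A_{a_2+\lambda+1,\text{ph}}^2(\mathbb B^1)$. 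Hence $T_{\tilde g}^{\text{ph-h},\lambda}\bigl(\phi(z_1)e_{a_2}^{\lambda+1}(z_2)\bigr)=\bigl(T_g^{\text{ph},a_2+\lambda+1}\phi\bigr)(z_1)\,e_{a_2}^{\lambda+1}(z_2)$, i.e. $U\,T_{\tilde g}^{\text{ph-h},\lambda}\,U^{-1}=\bigoplus_{a_2\in\mathbb N_0}T_g^{\text{ph},a_2+\lambda+1}$. I then invoke the elementary fact that an orthogonal direct sum of uniformly bounded operators $\bigoplus_n A_n$ is Fredholm if and only if some $A_0,\dots,A_{N-1}$ are Fredholm while the remaining $A_n$ ($n\geq N$) are invertible with $\sup_{n\geq N}\|A_n^{-1}\|<\infty$.

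For the direction ``$\Leftarrow$'', assume $|g|\geq c>0$ on all of $\mathbb B^1$. Then $g\in\VO(\mathbb B^1)$ and also $1/g\in\VO(\mathbb B^1)\subseteq\UC(\mathbb B^1)$ (since $\operatorname{Osc}(1/g)\leq c^{-2}\operatorname{Osc}(g)$). Proposition \ref{essspec} gives that every $T_g^{\text{ph},\mu}$ is Fredholm, and Proposition \ref{quant2}, applied to the pairs $(1/g,g)$ and $(g,1/g)$, gives $\|T_{1/g}^{\text{ph},\mu}T_g^{\text{ph},\mu}-I\|_\mu\to0$ and $\|T_g^{\text{ph},\mu}T_{1/g}^{\text{ph},\mu}-I\|_\mu\to0$ as $\mu\to\infty$; a Neumann-series argument then makes $T_g^{\text{ph},\mu}$ invertible for all large $\mu$ with $\|(T_g^{\text{ph},\mu})^{-1}\|_\mu\leq 2\|1/g\|_\infty\leq 2/c$, and the previous paragraph yields that $T_{\tilde g}^{\text{ph-h},\lambda}$ is Fredholm. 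For ``$\Rightarrow$'' I argue contrapositively: suppose $\inf_{\mathbb B^1}|g|=0$. If $0\in g(\partial\mathbb B^1)$ in the sense of Corollary \ref{cor13}, then Proposition \ref{essspec} shows the $a_2=0$ summand $T_g^{\text{ph},\lambda+1}$ is not Fredholm, hence neither is the direct sum. Otherwise $|g|\geq c_0>0$ on some $E(0,R)^c$, so the infimum $0$ is attained at a point $z^*\in\overline{E(0,R)}$, a compact subset of $\mathbb B^1$, with $g(z^*)=0$; testing against the normalized holomorphic reproducing kernel gives $\|T_g^{\text{ph},\mu}k^\mu(\cdot,z^*)\|_\mu^2\leq\mathcal B_\mu(|g|^2)(z^*)\to|g(z^*)|^2=0$ as $\mu\to\infty$ (pointwise convergence of the Berezin transform on bounded continuous functions), so the inverses $\|(T_g^{\text{ph},\mu})^{-1}\|_\mu$ are not uniformly bounded over the weights $\mu=a_2+\lambda+1$, and again the direct sum fails to be Fredholm. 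Finally, since $T_{\tilde g}^{\text{ph-h},\lambda}-\omega I=T_{\widetilde{g-\omega}}^{\text{ph-h},\lambda}$ with $g-\omega\in\VO(\mathbb B^1)$ for every $\omega\in\mathbb C$, the equivalence just proved says $\omega\notin\sigma_{ess}(T_{\tilde g}^{\text{ph-h},\lambda})$ iff $\inf_{\mathbb B^1}|g-\omega|>0$ iff $\omega\notin\overline{g(\mathbb B^1)}$, so $\sigma_{ess}(T_{\tilde g}^{\text{ph-h},\lambda})=\overline{g(\mathbb B^1)}$.

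The main obstacle I anticipate is the first step: verifying cleanly that the fiber decomposition really diagonalizes $T_{\tilde g}^{\text{ph-h},\lambda}$ as $\bigoplus_{a_2}T_g^{\text{ph},a_2+\lambda+1}$, which requires careful bookkeeping with the two families of basis functions and the $z_2$-integration. Once that identity is established, the remaining arguments are soft, relying only on Propositions \ref{essspec} and \ref{quant2} together with the pointwise convergence of the Berezin transform.
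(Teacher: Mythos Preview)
Your proposal is correct and follows essentially the same route as the paper: the paper also establishes the diagonalization $T_{\tilde g}^{\text{ph-h},\lambda}\cong\bigoplus_{a_2}T_g^{\text{ph},a_2+\lambda+1}$ via an explicit basis computation (stated as a separate lemma and corollary), then uses Proposition \ref{essspec} together with Proposition \ref{quant2} for sufficiency and the normalized kernels $k^\mu(\cdot,z^\ast)$ at an interior zero of $g$ for necessity. The only cosmetic differences are that the paper phrases its direct-sum Fredholm criterion (Lemma \ref{fredholmlemma}) in terms of approximate regularizers $\|A_kB_k-I\|\to0$ rather than eventual uniform invertibility, and presents the interior-zero case as a weakly-null sequence being mapped to a strongly-null one rather than as failure of uniform boundedness of inverses.
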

The first step towards achieving this will be the following:
\begin{lmm}
$T_{\tilde{g}}^{\emph{\text{ph-h}}, \lambda}$ acts as
\begin{align*}
\langle T_{\tilde g}^{\emph{\text{ph-h}}, \lambda} \mathfrak{e}_{(a_1,a_2)}^{\lambda,+}, \mathfrak{e}_{(\tilde a_1, \tilde a_2)}^{\lambda, +}\rangle_\lambda &= \begin{cases}
0, ~ & a_2 \neq \tilde a_2\\
\langle T_g^{\emph{\text{ph}}, a_2 + \lambda + 1} e_{a_1}^{a_2 + \lambda + 1}, e_{\tilde a_1}^{a_2 + \lambda + 1}\rangle_{a_2 + \lambda + 1}, ~ & a_2 = \tilde a_2
\end{cases},\\
\langle T_{\tilde g}^{\emph{\text{ph-h}}, \lambda} \mathfrak{e}_{(a_1,a_2)}^{\lambda,+}, \mathfrak{e}_{(b_1,b_2)}^{\lambda, -}\rangle_\lambda &= \begin{cases}
0, ~ & a_2 \neq b_2\\
\langle T_g^{\emph{\text{ph}}, a_2 + \lambda + 1} e_{a_1}^{a_2 + \lambda + 1}, \overline e_{b_1}^{a_2 + \lambda + 1} \rangle_{a_2 + \lambda +1 }, ~ & a_2 = b_2
\end{cases},\\
\langle T_{\tilde g}^{\emph{\text{ph-h}}, \lambda} \mathfrak{e}_{(b_1,b_2)}^{\lambda,-}, \mathfrak{e}_{(a_1,a_2)}^{\lambda, +}\rangle_\lambda &= \begin{cases}
0, ~ & b_2 \neq a_2\\
\langle T_g^{\emph{\text{ph}}, a_2 + \lambda + 1} \overline e_{b_1}^{a_2 + \lambda + 1}, e_{a_1}^{a_2 + \lambda + 1} \rangle_{a_2 + \lambda + 1}, ~ & b_2 = a_2
\end{cases},\\
\langle T_{\tilde g}^{\emph{\text{ph-h}}, \lambda} \mathfrak{e}_{(b_1,b_2)}^{\lambda,-}, \mathfrak{e}_{(\tilde b_1,\tilde b_2)}^{\lambda, -}\rangle_\lambda &= \begin{cases}
0, ~ & b_2 \neq \tilde b_2\\
\langle T_g^{\emph{\text{ph}}, b_2 + \lambda + 1} \overline e_{b_1}^{b_2 + \lambda + 1}, \overline e_{\tilde b_1}^{b_2 + \lambda + 1} \rangle_{b_2 + \lambda + 1}, ~ & b_2 = \tilde b_2
\end{cases}.
\end{align*}
In particular, $T_{\tilde g}^{\emph{\text{ph-h}}, \lambda}$ leaves the decomposition (\ref{decompphh}) invariant.
\end{lmm}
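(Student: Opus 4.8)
The plan is to compute all matrix entries of $T_{\tilde g}^{\text{ph-h},\lambda}$ with respect to the orthonormal basis $\{\mathfrak{e}_{(a_1,a_2)}^{\lambda,+}\}\cup\{\mathfrak{e}_{(b_1,b_2)}^{\lambda,-}\}$ of $\mathcal{A}_{\lambda,\text{ph-h}}^2(\mathbb{B}^2)$. Since $P_{\text{ph-h}}^\lambda$ is the orthogonal projection onto that space and all four families of basis functions lie in it, for any two basis functions $\psi,\phi$ one has $\langle T_{\tilde g}^{\text{ph-h},\lambda}\psi,\phi\rangle_\lambda=\langle M_{\tilde g}\psi,P_{\text{ph-h}}^\lambda\phi\rangle_\lambda=\langle\tilde g\,\psi,\phi\rangle_\lambda$, so everything reduces to evaluating the corresponding integrals over $\mathbb{B}^2$; the four displayed identities together constitute the full matrix of the operator, and the invariance statement will fall out once the vanishing of the cross entries (those between basis vectors with different second index) is established.

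Into each such integral I would substitute the factorizations $\mathfrak{e}_{(a_1,a_2)}^{\lambda,+}(z)=e_{a_1}^{a_2+\lambda+1}(z_1)\,e_{a_2}^{\lambda+1}(z_2)$ and $\mathfrak{e}_{(b_1,b_2)}^{\lambda,-}(z)=\overline{e}_{b_1}^{b_2+\lambda+1}(z_1)\,e_{b_2}^{\lambda+1}(z_2)$ recorded above, together with $\tilde g(z)=g(z_1)$ and $dv_\lambda(z)=\tfrac{\Gamma(\lambda+3)}{\pi^2\Gamma(\lambda+1)}(1-|z_1|^2-|z_2|^2)^\lambda\,dv(z_1)\,dv(z_2)$ on $\mathbb{B}^2$. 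In each of the four cases the integrand becomes a function of $z_1$ alone (built from $g$ and the two $z_1$-basis functions) times the monomial $e_{a_2}^{\lambda+1}(z_2)\,\overline{e_{\tilde a_2}^{\lambda+1}(z_2)}\propto z_2^{a_2}\overline{z_2}^{\tilde a_2}$, the whole thing carrying the joint weight. Because $\mathbb{B}^2$ is not a product domain, the $z_2$-integration must be performed on the slice $\{|z_2|^2<1-|z_1|^2\}$ for fixed $z_1$; the substitution $z_2=\sqrt{1-|z_1|^2}\,w$ turns this slice into $\mathbb{B}^1$, converts the weight (together with the Jacobian) into $(1-|z_1|^2)^{\lambda+1}(1-|w|^2)^\lambda$, and pulls out a factor $(1-|z_1|^2)^{(a_2+\tilde a_2)/2}$ from the monomial.

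Now the key point is elementary: by rotational symmetry $\int_{\mathbb{B}^1}w^{a_2}\overline{w}^{\tilde a_2}(1-|w|^2)^\lambda\,dv(w)=0$ unless $a_2=\tilde a_2$. This yields at once the vanishing of every matrix entry between basis vectors whose second indices disagree, and since $H_{a_2}$ is exactly the closed span of the basis functions carrying second index $a_2$ and $\mathcal{A}_{\lambda,\text{ph-h}}^2(\mathbb{B}^2)=\bigoplus_{a_2}H_{a_2}$, it follows that $T_{\tilde g}^{\text{ph-h},\lambda}$ leaves the decomposition (\ref{decompphh}) invariant. When the second indices agree (both equal, say, $a_2$) the $w$-integral equals the Beta value $\pi B(a_2+1,\lambda+1)$, and collecting the remaining constants — the $\Gamma$-factor of $dv_\lambda$ on $\mathbb{B}^2$, the normalization of $e_{a_2}^{\lambda+1}$, the power $(1-|z_1|^2)^{\lambda+1+a_2}$, and the Beta value — collapses everything to $\tfrac{a_2+\lambda+2}{\pi}\int_{\mathbb{B}^1}g(z_1)\,e_{a_1}^{a_2+\lambda+1}(z_1)\,\overline{e_{\tilde a_1}^{a_2+\lambda+1}(z_1)}(1-|z_1|^2)^{a_2+\lambda+1}\,dv(z_1)$ in the $(+,+)$ case (and the analogous expressions with the appropriate $e$'s replaced by $\overline{e}$'s in the other three cases, the $z_1$-computation being unchanged since the weight there is radial). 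Recognizing $\tfrac{a_2+\lambda+2}{\pi}(1-|z_1|^2)^{a_2+\lambda+1}\,dv(z_1)$ as precisely $dv_{a_2+\lambda+1}(z_1)$ on $\mathbb{B}^1$, this last integral is $\langle g\,e_{a_1}^{a_2+\lambda+1},e_{\tilde a_1}^{a_2+\lambda+1}\rangle_{a_2+\lambda+1}=\langle T_g^{\text{ph},a_2+\lambda+1}e_{a_1}^{a_2+\lambda+1},e_{\tilde a_1}^{a_2+\lambda+1}\rangle_{a_2+\lambda+1}$, which is the claimed value.

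The only genuine labor is the bookkeeping of the $\Gamma$-constants through the slice substitution, arranged so that the left-over $z_1$-measure comes out to be exactly $dv_{a_2+\lambda+1}$; this is the step most likely to produce index- or normalization-slips, but it is routine, and once the slice-integration identity is in hand the lemma, including the invariance of (\ref{decompphh}), is immediate.
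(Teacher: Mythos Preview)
Your proposal is correct and follows essentially the same route as the paper: both reduce the matrix entries to integrals over $\mathbb{B}^2$, separate the $z_2$-variable via the slice substitution (you write it as $z_2=\sqrt{1-|z_1|^2}\,w$, the paper via polar coordinates and $s=r_2/\sqrt{1-r_1^2}$), use rotational symmetry in $z_2$ for the vanishing when the second indices differ, and evaluate the remaining radial integral by the Beta function to recover $dv_{a_2+\lambda+1}$ on $\mathbb{B}^1$. Your use of the factorization $\mathfrak{e}_{(a_1,a_2)}^{\lambda,+}=e_{a_1}^{a_2+\lambda+1}\,e_{a_2}^{\lambda+1}$ from the outset makes the constant bookkeeping a bit tidier, but the argument is the same.
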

\begin{proof}
The computations are identical to those in the proof of \cite[Lemma 2.2]{Bauer_Vasilevski2017}. We reproduce them to prove the first identity here, the remaining three cases can be deduced using the same calculations.

Let $(a_1,a_2), (\tilde a_1, \tilde a_2) \in \mathbb N_0^2$. Then,
\begin{align*}
\langle &T_{\tilde g}^{\text{ph-h}, \lambda} \mathfrak e_{(a_1,a_2)}^{\lambda,+}, \mathfrak e_{(\tilde a_1, \tilde a_2)}^{\lambda,+}\rangle\\
 &= \sqrt{\frac{\Gamma(a_1 + a_2 + \lambda + 3)}{a_1! a_2! \Gamma(\lambda + 3)} \frac{\Gamma(\tilde a_1 + \tilde a_2 + \lambda + 3)}{\tilde a_1! \tilde a_2! \Gamma(\lambda + 3)}} \frac{\Gamma(\lambda + 3)}{\pi^2\Gamma(\lambda +1)}\\
& \times \int_{\mathbb B^2} g(z_1) z_1^{a_1} z_2^{a_2} \overline{z_1}^{\tilde a_1} \overline{z_2}^{\tilde a_2} (1-(|z_1|^2 + |z_2|^2))^\lambda dv(z_1,z_2).
\intertext{Introducing polar coordinates $z_1 = r_1e^{i\theta_1}, z_2 = r_2e^{i\theta_2}$, we obtain}
&= \sqrt{\frac{\Gamma(a_1 + a_2 + \lambda + 3) \Gamma(\tilde a_1 + \tilde a_2 + \lambda + 3)}{a_1! a_2! \tilde a_1 ! \tilde a_2!}} \frac{1}{\pi^2\Gamma(\lambda + 1)} \int_0^{2\pi} e^{i\theta_2(a_2 - \tilde{a}_2)} d\theta_2\\
&\quad \times \int_{ \{r_1, r_2 > 0; r_1^2 + r_2^2 < 1 \}} \int_0^{2\pi} g(r_1e^{i\theta_1}) r_1^{a_1 + \tilde a_1 + 1} r_2^{a_2 + \tilde a_2 + 1} e^{i \theta_1(a_1 - \tilde a_1)} \\
&\quad \times (1-r_1^2 - r_2^2)^\lambda d\theta_1 dr_2 dr_1 .
\intertext{Of course, the first integral in this expression equals $0$ for $a_2 \neq \tilde a_2$ and $2\pi$ for $a_2 = \tilde a_2$. For the latter case, we get}
&= \sqrt{\frac{\Gamma(a_1 + a_2 + \lambda + 3) \Gamma(\tilde a_1 + a_2+ \lambda + 3)}{a_1! (a_2!)^2 \tilde a_1 !}} \frac{2}{\pi \Gamma(\lambda + 1)}\\
&\quad \times  \int_{\{ r_1, r_2 > 0; r_1^2 + r_2^2 < 1\} } \int_0^{2\pi} g(r_1e^{i\theta_1}) r_1^{a_1 + \tilde a_2 + 1} r_2^{2a_2 + 1}e^{i  \theta_1 (a_1 - \tilde a_1)}\\
&\quad \times (1- r_1^2 - r_2^2)^\lambda d\theta_1 dr_2 dr_1.
\intertext{Using the substitution $s = \frac{r_2}{\sqrt{1-r_1^2}}$ in the $r_2$ integral we get }
&= \sqrt{\frac{\Gamma(a_1 + a_2 + \lambda + 3) \Gamma(\tilde a_1 + a_2 + \lambda + 3)}{a_1! (a_2!)^2 \tilde a_1 !}} \frac{2}{\pi \Gamma(\lambda + 1)}\\
&\quad \times \int_0^{2\pi} \int_0^1 g(r_1e^{i\theta_1}) r_1^{a_1 + \tilde a_1 + 1}e^{i \theta_1 (a_1 - \tilde a_1)} (1-r_1^2)^{a_2 + \lambda + 1}  d\theta_1 dr_1 \\
&\quad \times \int_0^1 s^{2a_2 + 1} (1-s^2)^\lambda ds\\
&= \sqrt{\frac{\Gamma(a_1 + a_2 + \lambda + 3) \Gamma(\tilde a_1 + a_2 + \lambda + 3)}{a_1! (a_2!)^2 \tilde a_1 !}} \frac{1}{\pi \Gamma(\lambda + 1)} \\
&\quad  \times \int_{\mathbb B} g(z_1) z_1^{a_1} \overline z_1^{\tilde a_1} (1 - |z_1|^2)^{a_2 + \lambda + 1} dv(z) \int_0^1 s^{a_2} (1-s)^\lambda ds.
\intertext{Using the beta function $B(x,y) = \int_0^1 s^{x-1}(1-s)^{y-1} ds$ and the well-known identity $B(x,y) = \dfrac{\Gamma(x)\Gamma(y)}{\Gamma(x+y)}$ we obtain}
&= \frac{\Gamma(a_2 + \lambda + 2)}{a_2! \Gamma(\lambda + 1)} B(a_2 + 1, \lambda + 1)\\
&\quad \times \int_{\mathbb B} g(z_1) e_{a_1}^{a_2 + \lambda + 1}(z_1) \overline{e}_{\tilde a_1}^{a_2 + \lambda + 1}(z_1) \frac{\Gamma(a_2 + \lambda + 3)}{\pi \Gamma(a_2 + \lambda + 2)} (1 - |z_1|^2)^{a_2 + \lambda + 1} dvz\\
&= \langle T_g^{a_2 + \lambda + 1} e_{a_1}^{a_2 + \lambda + 1}, e_{\tilde a_1}^{a_2 + \lambda + 1}\rangle_{a_2 + \lambda + 1}.
\end{align*}
\end{proof}
With the isometry $U$ introduced above we obtain:
\begin{cor}
It holds
\begin{align*}
&T_{\tilde g}^{\emph{\text{ph-h}}, \lambda}: \mathcal A_{\lambda, \emph{\text{ph-h}}}^2(\mathbb B^2) \to \mathcal A_{\lambda, \emph{\text{ph-h}}}^2(\mathbb B^2)\\
\cong &\bigoplus_{a_2 \in \mathbb N_0} T_g^{\emph{\text{ph}}, a_2 + \lambda + 1}: \bigoplus_{a_2 \in \mathbb N_0} \mathcal A_{a_2 + \lambda + 1, \emph{\text{ph}}}^2(\mathbb B^1) \to \bigoplus_{a_2 \in \mathbb N_0} \mathcal A_{a_2 + \lambda + 1, \emph{\text{ph}}}^2(\mathbb B^1). 
\end{align*}
\end{cor}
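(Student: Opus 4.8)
The plan is to read off the claimed unitary equivalence directly from the matrix-element computation of the preceding lemma, using the isometry $U$ as the intertwining map. First I would invoke the final assertion of that lemma, namely that $T_{\tilde g}^{\text{ph-h}, \lambda}$ leaves the decomposition (\ref{decompphh}) invariant. This means the operator is block diagonal with respect to $\mathcal A_{\lambda, \text{ph-h}}^2(\mathbb B^2) = \bigoplus_{a_2 \in \mathbb N_0} H_{a_2}$, so it equals $\bigoplus_{a_2} \big(T_{\tilde g}^{\text{ph-h}, \lambda}|_{H_{a_2}}\big)$ and it suffices to identify each restriction $T_{\tilde g}^{\text{ph-h}, \lambda}|_{H_{a_2}}$ separately.

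Next I would record how $u_{a_2}$ acts on basis vectors. From the product factorizations $\mathfrak{e}_{(a_1,a_2)}^{\lambda,+}(z) = e_{a_1}^{a_2+\lambda+1}(z_1)\, e_{a_2}^{\lambda+1}(z_2)$ and $\mathfrak{e}_{(b_1,a_2)}^{\lambda,-}(z) = \overline e_{b_1}^{a_2+\lambda+1}(z_1)\, e_{a_2}^{\lambda+1}(z_2)$, together with the definition $u_{a_2}(f) = f_{a_2}$ as the coefficient of $e_{a_2}^{\lambda+1}(z_2)$ in the expansion (\ref{decompmod}), one sees that $u_{a_2}$ sends the orthonormal basis $\{\mathfrak{e}_{(a_1,a_2)}^{\lambda,+},\, \mathfrak{e}_{(b_1,a_2)}^{\lambda,-}\}$ of $H_{a_2}$ to the orthonormal basis $\{e_{a_1}^{a_2+\lambda+1},\, \overline e_{b_1}^{a_2+\lambda+1}\}$ of $\mathcal A_{a_2+\lambda+1,\text{ph}}^2(\mathbb B^1)$.

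The core step is then a comparison of matrix entries. An operator between Hilbert spaces is completely determined by its matrix with respect to fixed orthonormal bases of domain and codomain, so it is enough to check that the matrix of $u_{a_2}\big(T_{\tilde g}^{\text{ph-h},\lambda}|_{H_{a_2}}\big)u_{a_2}^{-1}$ coincides with that of $T_g^{\text{ph}, a_2+\lambda+1}$. But the four cases of the lemma furnish exactly these entries: for each pairing of basis vectors with common second index $a_2$, the inner product $\langle T_{\tilde g}^{\text{ph-h},\lambda}\,\cdot\,,\,\cdot\,\rangle_\lambda$ equals the corresponding inner product $\langle T_g^{\text{ph}, a_2+\lambda+1}\,\cdot\,,\,\cdot\,\rangle_{a_2+\lambda+1}$ after applying $u_{a_2}$ in both slots, while all entries with distinct second indices vanish, consistently with the block diagonality already established. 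Hence $u_{a_2}\big(T_{\tilde g}^{\text{ph-h},\lambda}|_{H_{a_2}}\big)u_{a_2}^{-1} = T_g^{\text{ph}, a_2+\lambda+1}$ for every $a_2 \in \mathbb N_0$.

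Finally I would assemble these per-block identities into a single statement by taking the orthogonal direct sum over $a_2 \in \mathbb N_0$, yielding $U\, T_{\tilde g}^{\text{ph-h},\lambda}\, U^{-1} = \bigoplus_{a_2 \in \mathbb N_0} T_g^{\text{ph}, a_2+\lambda+1}$, which is precisely the asserted equivalence. There is essentially no genuine obstacle here beyond careful bookkeeping; the only point deserving slight attention is to confirm that the four cases of the lemma $(+,+)$, $(+,-)$, $(-,+)$ and $(-,-)$ genuinely exhaust all matrix entries in the pluriharmonic basis, so that the identification of the two matrices is complete rather than partial.
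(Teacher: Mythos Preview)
Your proposal is correct and matches the paper's approach exactly: the paper states this corollary without proof, prefacing it only with ``With the isometry $U$ introduced above we obtain,'' so your careful bookkeeping of how the matrix entries from the preceding lemma transfer through $u_{a_2}$ is precisely the argument the paper leaves implicit.
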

For proving Proposition \ref{propspecdecomp}, we will also need the following well known fact.
\begin{lmm}\label{fredholmlemma}
Let $H_k$, $k \in \mathbb{N}_0$ be a family of Hilbert spaces and let $\bigoplus_k H_k$ denote their direct orthogonal sum. For a family of operators $A_k \in \mathcal L(H_k)$ let $A := \bigoplus_k A_k$ act diagonally on $H$. Then, $A$ is Fredholm if and only if each $A_k$ is Fredholm and there are Fredholm regularizers $B_k^1, B_k^2$ of $A_k$ such that
\[ \| A_k B_k^1 - I\|_{\mathcal L(H_k)} \to 0, \quad k \to \infty \]
and
\[ \| B_k^2 A_k - I \|_{\mathcal L(H_k)} \to 0, \quad k \to \infty. \] 
\end{lmm}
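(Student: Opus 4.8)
The plan is to prove both directions of the equivalence by hand, using the standard characterization of Fredholmness via approximate inverses modulo compacts, adapted to the block-diagonal setting. The key observation is that on the direct sum $H = \bigoplus_k H_k$, an operator of the form $A = \bigoplus_k A_k$ (with $\sup_k \|A_k\| < \infty$, which is automatic here since $A$ is bounded) is compact if and only if each $A_k$ is compact and $\|A_k\|_{\mathcal L(H_k)} \to 0$ as $k \to \infty$. Granting this, a natural route is: $A$ is Fredholm $\iff$ there exists $B \in \mathcal L(H)$ with $AB - I$ and $BA - I$ compact, and then to argue that $B$ may be taken block-diagonal.

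First I would prove the "if" direction. Suppose each $A_k$ has Fredholm regularizers $B_k^1, B_k^2$ with $\|A_k B_k^1 - I\| \to 0$ and $\|B_k^2 A_k - I\| \to 0$. Since $A_k B_k^1 - I$ is compact for every $k$ and its norm tends to $0$, the operator $\bigoplus_k (A_k B_k^1 - I)$ is compact on $H$; however one must first ensure $B^1 := \bigoplus_k B_k^1$ is bounded. For large $k$ we have $\|A_k B_k^1 - I\| < 1/2$, so $A_k B_k^1$ is invertible and $B_k^1 = A_k^{-r}(A_k B_k^1)$ where $A_k^{-r}$ denotes a right inverse — rather than chase this, I would instead replace $B_k^1$ for large $k$ by a genuinely norm-bounded choice: since $\|A_k B_k^1 - I\| \to 0$, the $A_k$ are eventually "uniformly invertible from the right" in the sense that $A_k$ has a right inverse of norm $\le 2\|B_k^1 A_k B_k^1\|/\dots$; cleaner is to observe that eventually $\mathrm{ind}(A_k) = 0$ and $A_k$ is bounded below by $1/(2\|B_k^1\|)$... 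This is where care is needed, so let me instead take the following approach: for $k$ large, $A_k$ is invertible (not merely Fredholm), because $A_k B_k^1$ and $B_k^2 A_k$ are both invertible for large $k$, forcing $A_k$ injective with closed range and dense range; set $\widetilde B_k = A_k^{-1}$ for such $k$ and $\widetilde B_k = B_k^1$ for the finitely many remaining $k$. Then $\|\widetilde B_k\| = \|A_k^{-1}\| \le 2\|B_k^1\|\cdot(1-\|A_kB_k^1-I\|)^{-1}$ stays bounded (after possibly passing to a tail), so $\widetilde B := \bigoplus_k \widetilde B_k$ is bounded, $A\widetilde B - I = \bigoplus_k(A_k\widetilde B_k - I)$ has all components compact with norms $\to 0$ hence is compact, and similarly $\widetilde B A - I$ is compact; thus $A$ is Fredholm.

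For the "only if" direction, suppose $A = \bigoplus_k A_k$ is Fredholm. Let $P_k$ be the orthogonal projection of $H$ onto $H_k$; it commutes with $A$. Each $A_k = A|_{H_k}$ is then Fredholm on $H_k$: from $BA - I = K$ compact with $B \in \mathcal L(H)$, the restriction $P_k B P_k$ is a left regularizer of $A_k$ modulo the compact operator $P_k K P_k$, and symmetrically for a right regularizer; standard Fredholm theory on a single Hilbert space gives $A_k$ Fredholm. Moreover, $\ker A$ and $\ker A^*$ are finite-dimensional, and $\ker A = \bigoplus_k \ker A_k$, so $\ker A_k = \{0\}$ for all but finitely many $k$, and likewise $\ker A_k^* = \{0\}$ eventually; hence $A_k$ is invertible for all large $k$. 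For such $k$, $B_k^1 = B_k^2 := A_k^{-1}$ works and $\|A_k B_k^1 - I\| = 0$; for the finitely many exceptional $k$, pick any Fredholm regularizers. To get the uniform decay statement along the whole sequence, note it holds trivially (the tail is identically zero). The final subtlety — and the main obstacle — is justifying that $A_k^{-1}$ is uniformly bounded over large $k$, equivalently that $A_k$ is uniformly bounded below; this follows because $A$ Fredholm and injective on the tail $\bigoplus_{k\ge N} H_k$ implies $A$ is bounded below on that tail (a Fredholm operator with trivial kernel is bounded below), and the bound on $A|_{\bigoplus_{k \ge N}H_k}$ restricts to a uniform lower bound on each $A_k$, $k \ge N$. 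Assembling these observations completes the proof.
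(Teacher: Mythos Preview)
The paper states this lemma as a ``well known fact'' and gives no proof, so there is nothing to compare against. Your ``only if'' argument is correct; the gap is in the ``if'' direction. You rightly flag that one must ensure $\bigoplus_k \widetilde B_k$ is bounded, and you derive $\|A_k^{-1}\| \le \|B_k^1\|\,(1-\|A_kB_k^1-I\|)^{-1}$, but then assert without justification that this ``stays bounded (after possibly passing to a tail)''. Nothing in the hypotheses controls $\|B_k^1\|$, and in fact the lemma as literally stated is false: take $H_k = \mathbb{C}$, $A_k = 1/k$, $B_k^1 = B_k^2 = k$. Each $A_k$ is invertible (hence Fredholm) and $A_kB_k^1 - I = B_k^2A_k - I = 0$ identically, so the stated hypotheses are satisfied, yet $A = \operatorname{diag}(1,\tfrac12,\tfrac13,\dots)$ on $\ell^2$ is compact with non-closed range and therefore not Fredholm. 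Your gap is thus unfillable as the statement stands.

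The intended hypothesis is almost certainly $\sup_k\|B_k^i\| < \infty$ (equivalently, $B^i := \bigoplus_k B_k^i \in \mathcal L(H)$); this is precisely how the lemma is applied in the proof of Proposition~\ref{propspecdecomp}, where the regularizers $T_{1/g}^{\text{ph},\,a_2+\lambda+1}$ are uniformly bounded by $\|1/g\|_\infty$. Under that assumption your detour through invertibility of the tail $A_k$ becomes unnecessary: $AB^1 - I = \bigoplus_k(A_kB_k^1 - I)$ is block-diagonal with compact blocks whose norms tend to zero, hence is compact (truncate to finitely many blocks and bound the tail by its norm), and likewise $B^2A - I$ is compact, so $A$ is Fredholm directly.
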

\begin{proof}[Proof of Proposition \ref{propspecdecomp}]
First, assume $|g(z_1)| \geq c > 0$ for all $z_1 \in \mathbb{B}^1$. Then, $T_g^{\text{ph}, a_2 + \lambda + 1} \in \mathcal L(\mathcal A_{a_2 + \lambda + 1, \text{ph}}^2 (\mathbb B^1))$ is Fredholm by Proposition \ref{essspec}. Further, it holds
\begin{align*}
\| T_g^{\text{ph}, a_2 + \lambda + 1} T_{1/g}^{\text{ph}, a_2 + \lambda + 1} - I \|_{a_2 + \lambda + 1} \to 0, \quad a_2 \to \infty
\end{align*}
by Proposition \ref{quant2} (recall that $\VO(\mathbb B^1)$ is contained in $\UC(\mathbb B^1)$) and also
\[ \| T_{1/g}^{\text{ph}, a_2 + \lambda + 1} T_g^{\text{ph}, a_2 + \lambda + 1} - I\|_{a_2 + \lambda + 1} \to 0, \quad a_2 \to \infty. \] 
Hence, $T_g^{\text{ph-h}, \lambda}$ is Fredholm by Lemma \ref{fredholmlemma}.

On the other hand, assume that $\inf_{z_1 \in \mathbb{B}^1} |g(z_1)| = 0$. There are two cases:
\begin{enumerate}
\item There is a sequence $(z_1^j)_j \in \mathbb B^1$ with $z_1^j \to \partial \mathbb B^1$ such that $g(z_1^j) \to 0$,
\item there is some $z_1 \in \mathbb{B}^1$ such that $g(z_1) = 0$.
\end{enumerate}
In the first case, the operators $T_g^{\text{ph}, a_2 + \lambda + 1}$ on $\mathcal A_{a_2 + \lambda + 1, \text{ph}}^2(\mathbb B^1)$ are not Fredholm by Proposition \ref{essspec}, hence $T_{\tilde g}^{\text{ph-h}, \lambda}$ cannot be Fredholm by Lemma \ref{fredholmlemma}. In the second case, observe the following: Consider the sequence $(f_j)_j \subset \mathcal A_{\lambda, \text{ph-h}}^2(\mathbb B^2)$ defined on the decomposition (\ref{decompphh}) via
\begin{align*}
f_j = \big (\delta_{a_2, j} k^{a_2 + \lambda + 1}(\cdot, z_1) \big )_{a_2 \in \mathbb{N}_0} \in \bigoplus_{a_2 \in \mathbb N_0} \mathcal{A}_{a_2 + \lambda + 1}^2(\mathbb B^1),
\end{align*}
where $k^{a_2 + \lambda + 1}$ is the normalized reproducing kernel on $\mathcal A_{a_2 + \lambda + 1}^2(\mathbb B^1)$. In particular, $f_j \to 0$ weakly as it is an orthonormal sequence. Then,
\begin{align*}
\| T_{\tilde g}^{\text{ph-h},\lambda} f_j\|_\lambda^2 &\leq \langle g k^{j+\lambda+1}(\cdot, z_1), g k^{j + \lambda + 1}(\cdot, z_1)\rangle_{j +\lambda + 1}\\
&= \langle |g|^2 k^{j + \lambda + 1}(\cdot, z_1), k^{j +\lambda + 1}(\cdot, z_1) \rangle_{j +\lambda + 1}\\
&= \mathcal B_{j + \lambda + 1}(|g|^2)(z_1),
\end{align*}
which denotes the (holomorphic) Berezin transform of $|g|^2$ on $\mathbb B^1$. Since $g$ is assumed to be in $\VO(\mathbb B^1)$, it holds in particular $|g|^2 \in C_b(\mathbb B^1)$. Hence,
\[ \mathcal B_{j + \lambda + 1}(|g|^2)(z_1) \to |g|^2(z_1) = 0, \quad j \to \infty. \]
But this means that $(T_{\tilde g}^{\text{ph-h}, \lambda} f_j)_{j \in \mathbb{N}}$ converges strongly to zero. Hence, $T_{\tilde g}^{\text{ph-h}, \lambda}$ cannot be Fredholm, as no Fredholm operator can map a weakly convergent zero sequence (which is not already strongly convergent) to a strongly convergent zero sequence.
\end{proof}

\appendix
\section{The limit of the norm of Toeplitz operators on bounded symmetric domains}\label{appendixA}
In this section we are going to provide a proof of the following fact for $\Omega$ a bounded symmetric domain in its Harish-Chandra realization:
\begin{prop}\label{unitballberezin}
Let $f \in L^\infty(\Omega)$. Then it holds
\[ \mathcal B_\lambda(f) \to f ~ a.e.,~\lambda \to \infty \]
and also
\[ \lim_{\lambda \to \infty} \| \mathcal B_\lambda(f) \|_\infty = \lim_{\lambda \to \infty} \| T_f^\lambda\|_\lambda = \| f\|_\infty. \]
\end{prop}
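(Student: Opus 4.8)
The plan is to push everything through the explicit form of the Berezin density on a bounded symmetric domain. Using $K^\lambda(w,z)=h(w,z)^{-\lambda}$ and $dv_\lambda=c_\lambda h(w,w)^{\lambda-p}\,dv$ one writes $\mathcal B_\lambda(f)(z)=\int_\Omega f(w)\,d\mu_{z,\lambda}(w)$ with $d\mu_{z,\lambda}(w)=c_\lambda\,h(w,w)^{-p}\,\kappa(z,w)^\lambda\,dv(w)$ and $\kappa(z,w):=\dfrac{h(z,z)h(w,w)}{|h(z,w)|^{2}}$; by the reproducing property $\int_\Omega|K^\lambda(w,z)|^{2}\,dv_\lambda(w)=K^\lambda(z,z)$ each $\mu_{z,\lambda}$ is a probability measure. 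I would then record two structural facts. First, $0\le\kappa(z,w)\le1$ with $\kappa(z,w)=1$ if and only if $z=w$: this is Cauchy--Schwarz, $|K^\lambda(z,w)|\le K^\lambda(z,z)^{1/2}K^\lambda(w,w)^{1/2}$, together with the fact that reproducing kernels at distinct points are linearly independent; moreover $\kappa(z,w)\to0$ as $w\to\partial\Omega$ for fixed $z$, since $h(w,w)\to0$ while $h(z,\cdot)$ is, by standard properties of the Jordan triple determinant, bounded away from $0$ on $\overline\Omega$. Second, $c_\lambda$ grows at most polynomially in $\lambda$: this follows from $\int_\Omega h(w,w)^{\lambda-p}\,dv(w)\ge c\,\lambda^{-n}$ ($n$ the complex dimension of $\Omega$), obtained by restricting the integral to a small Euclidean ball around $0$ where $h(w,w)\ge1-c'|w|^{2}$.

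Next I would prove $\mathcal B_\lambda(f)(z)\to f(z)$ at every Lebesgue point $z$ of $f$, which gives the almost-everywhere statement. Writing $\mathcal B_\lambda(f)(z)-f(z)=\int_\Omega\bigl(f(w)-f(z)\bigr)\,d\mu_{z,\lambda}(w)$ I split at a small Euclidean ball $B=B(z,r_0)$ with $\overline B\subset\Omega$. On $\Omega\setminus B$ one has $\kappa(z,\cdot)\le1-\eta$ for some $\eta>0$, so writing $d\mu_{z,\lambda}=\tfrac{c_\lambda}{c_{\lambda_0}}\kappa(z,\cdot)^{\lambda-\lambda_0}\,d\mu_{z,\lambda_0}$ with $\lambda_0>p-1$ fixed, that part is $\le2\|f\|_\infty\tfrac{c_\lambda}{c_{\lambda_0}}(1-\eta)^{\lambda-\lambda_0}\to0$. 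On $B$, a Taylor expansion of $-\log\kappa(z,\cdot)$ at $z$, whose complex Hessian there is a positive multiple of the Bergman metric tensor and hence positive definite, gives $-\log\kappa(z,w)\ge c_z|w-z|^{2}$, so $d\mu_{z,\lambda}/dv\le C_z\lambda^{n}e^{-c_z\lambda|w-z|^{2}}$ on $B$. Decomposing $B$ into dyadic Euclidean annuli $R_k$ around $z$ and setting $\epsilon_k:=\frac{1}{|B(z,2^{-k}r_0)|}\int_{B(z,2^{-k}r_0)}|f-f(z)|\,dv\to0$, this part is bounded by $C_z'\sum_{k\ge0}t_k^{\,n}e^{-t_k/4}\epsilon_k$ with $t_k=c_z\lambda r_0^{2}4^{-k}$; since $\sum_k t_k^{\,n}e^{-t_k/4}$ is bounded uniformly in $\lambda$ (a logarithmic-scale Riemann sum of the integrable bump $s\mapsto s^{\,n-1}e^{-s/4}$), while each of the finitely many terms with $k$ below a cutoff $K$ tends to $0$ as $\lambda\to\infty$, one gets $\limsup_{\lambda\to\infty}|\mathcal B_\lambda(f)(z)-f(z)|\le C\eta$ for arbitrary $\eta>0$.

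The two norm identities are then soft. For every $\lambda$ one has the chain $\|\mathcal B_\lambda(f)\|_\infty=\sup_{z\in\Omega}\bigl|\langle T_f^\lambda k^\lambda(\cdot,z),k^\lambda(\cdot,z)\rangle_\lambda\bigr|\le\|T_f^\lambda\|_\lambda\le\|f\|_\infty$, the last inequality because $T_f^\lambda=P^\lambda M_f$ restricted to $\mathcal A_\lambda^2(\Omega)$. For the reverse direction, fix $\varepsilon>0$; the set $\{z:|f(z)|>\|f\|_\infty-\varepsilon\}$ has positive Lebesgue measure, so by the almost-everywhere convergence it contains a point $z_0$ with $\mathcal B_\lambda(f)(z_0)\to f(z_0)$. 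Then $\liminf_{\lambda\to\infty}\|\mathcal B_\lambda(f)\|_\infty\ge\liminf_{\lambda\to\infty}|\mathcal B_\lambda(f)(z_0)|=|f(z_0)|>\|f\|_\infty-\varepsilon$, and letting $\varepsilon\to0$ squeezes both $\|\mathcal B_\lambda(f)\|_\infty$ and $\|T_f^\lambda\|_\lambda$ between quantities converging to $\|f\|_\infty$.

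The hard part is the almost-everywhere convergence: one must make the near-diagonal analysis precise, i.e.\ obtain the quadratic lower bound for $-\log\kappa(z,\cdot)$ with constants controlled on compact subsets of $\Omega$, pin down the polynomial growth of $c_\lambda$, and run the dyadic summation showing that $\mu_{z,\lambda}$ behaves like an approximate identity relative to Lebesgue measure near $z$ while shedding the rest of its mass towards $\partial\Omega$; everything else is routine bookkeeping. As an alternative one could exploit the $\mathrm{Aut}(\Omega)$-invariance of $\mathcal B_\lambda$ to reduce to $z=0$, where $\mathcal B_\lambda(g)(0)=\int_\Omega g\,dv_\lambda$ and $v_\lambda$ concentrates at $0$; but upgrading the resulting pointwise statement to an almost-everywhere statement for bounded $f$ still needs essentially the same approximate-identity (or maximal-function) estimate, so this route does not dodge the main difficulty.
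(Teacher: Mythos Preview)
Your argument is correct and follows a genuinely different path from the paper. The paper first establishes the maximal-function bound $|\mathcal B_\lambda(f)(0)|\le Cf^\ast(0)$ at the origin only, using the spectral-norm description $h(z,z)=\prod_j(1-t_j^2)$ and an annular decomposition of $\Omega$ into Euclidean shells; combined with the weak $(1,1)$ inequality for $f^\ast$ and density of continuous functions, this gives a.e.\ convergence on a neighbourhood of $0$. The paper then invokes the identity $\mathcal B_\lambda(f\circ\varphi_z)=\mathcal B_\lambda(f)\circ\varphi_z$ to transport this to a neighbourhood of every $z\in\Omega$, and finally patches the local statements together via a countable subcover. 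The paper explicitly remarks that this detour through $0$ is forced because the Hardy--Littlewood maximal function does not transform well under the geodesic symmetries $\varphi_z$, so one cannot obtain $|\mathcal B_\lambda(f)(z)|\le Cf^\ast(z)$ globally from the outset.

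Your route sidesteps the automorphism-and-patching step entirely by proving convergence at every Lebesgue point directly. The key ingredient is your identification of $-\log\kappa(z,\cdot)$ with the Calabi diastasis of the potential $-\log h$, whose second-order Taylor expansion at $w=z$ is exactly the positive-definite Hermitian form coming from the Bergman metric; this yields the Gaussian bound $\kappa(z,w)^\lambda\le e^{-c_z\lambda|w-z|^2}$ on a small Euclidean ball and hence a pointwise approximate-identity estimate at each $z$ rather than a global maximal inequality. The price is that your constants $c_z,C_z$ depend on $z$, but for almost-everywhere convergence this is harmless. Your closing remark anticipates the paper's route but underestimates it slightly: reducing to $z=0$ via $\operatorname{Aut}(\Omega)$-invariance does allow one to trade the near-diagonal Taylor analysis for a single maximal-function estimate at the origin, after which the weak $(1,1)$ inequality handles the upgrade to a.e.\ convergence without further kernel work. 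For the norm limit, your single-Lebesgue-point argument is a mild simplification of the paper's Egorov step and works equally well.
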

The corresponding result for the Segal-Bargmann spaces was first proven in \cite{Bauer_Coburn_Hagger}. The proof here is heavily motivated by the Segal-Bargmann space proof. The main technical difference is the fact that we need to conclude the proof first locally around $0$ and ``patch things together'' afterwards, instead of proving it globally right away. This modification of the proof is necessary due to the fact that the Hardy-Littlewood maximal function $f^\ast$ of $f \in L^\infty(\mathbb C^n)$ behaves well under certain automorphisms of $\mathbb{C}^n$, namely shifts  (i.e. $f^\ast(w) = \big (f(\cdot - w)\big )^\ast(0)$), but the corresponding property fails with respect to the geodesic symmetries of bounded symmetric domains.

Before attempting the proof, we need to recall a few more facts on bounded symmetric domains in addition to those mentioned in the beginning.

Let $\Omega \subset \mathbb C^n$ be a bounded symmetric domain in its Harish-Chandra realization. Let $\operatorname{Aut}(\Omega)$ denote the group of holomorphic automorphisms of $\Omega$ and $\operatorname{Aut}_0(\Omega)$ the connected component containing the identity. Denote by $K$ the maximal subgroup of $\operatorname{Aut}_0(\Omega)$ stabilizing 0. If $r$ denotes the rank of $\Omega$, there are elements $e_1, \dots, e_r \in \mathbb{C}^n$ of $\mathbb R$-linearly independend vectors such that each $z \in \mathbb C^n$ can be written in the form
\[ z = k\sum_{j=1}^r t_j e_j \]
for some $k \in K$ and $t_1 \geq t_2 \geq \dots \geq t_r \geq 0$. Further,
\[ z \mapsto \| z\|_{\Omega} := t_1 \]
is well defined and a norm on $\mathbb C^n$, the \textit{spectral norm} of $\Omega$ (cf. \cite[p. 64]{Upmeier}) and it holds
\[ \Omega = \{ z \in \mathbb{C}^n;~ \| z\|_{\Omega} < 1\}. \]
Further, the Jordan triple determinant $h$ is given on the diagonal by the formula
\[ h(z,z) = \prod_{j=1}^r (1-t_j^2). \]
Finally, for $z \in \Omega$ we denote by $\varphi_z$ the geodesic symmetry interchanging $z$ and $0$. 

For a function $f \in L^\infty(\Omega)$ denote by $\tilde{f}$ the continuation of $f$ to $\mathbb{C}^n$ by zero. By $f^\ast$ we denote the Hardy-Littlewood maximal function of $\tilde{f}$, which is defined on $\mathbb C^n$ by
\[ f^\ast(w):= \sup_{\rho > 0} \frac{1}{|B(w,\rho)|} \int_{B(w,\rho)} |\tilde{f}(z)| dv(z). \] 
Here, $B(w,\rho)$ denotes the Euclidean ball around $w$ with radius $\rho$ and $|B(w,\rho)|$ denotes the volume of the ball.
\begin{lmm}
There is a constant $C > 0$ such that for each $f \in L^\infty (\Omega)$ and all $\lambda \geq p+1$ it holds
\[ |\mathcal B_\lambda(f)(0)| \leq C f^\ast (0). \]
\end{lmm}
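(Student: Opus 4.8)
The plan is to reduce the assertion to the classical fact that integration against a radially decreasing kernel centered at $0$ is controlled by the Hardy--Littlewood maximal function at $0$. First I would record that, since $K^\lambda(\cdot,0)\equiv 1$ and $v_\lambda$ is a probability measure, we have $\|K^\lambda(\cdot,0)\|_\lambda=1$, so the normalized reproducing kernel at $0$ is the constant function $1$ and
\[
\mathcal B_\lambda(f)(0)=\int_\Omega f\,dv_\lambda=c_\lambda\int_\Omega f(z)\,h(z,z)^{\lambda-p}\,dv(z).
\]
Writing $\tilde f$ for the extension of $f$ by zero and $\phi_\lambda(z):=c_\lambda h(z,z)^{\lambda-p}\mathbf 1_\Omega(z)$ on $\mathbb C^n$, this gives $|\mathcal B_\lambda(f)(0)|\le\int_{\mathbb C^n}|\tilde f(z)|\,\phi_\lambda(z)\,dv(z)$.

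Next I would dominate $\phi_\lambda$ by an explicit \emph{Euclidean} radial kernel. Using $h(z,z)=\prod_{j=1}^r(1-t_j^2)\le 1-t_1^2=1-\|z\|_\Omega^2$ together with the norm comparison $\|z\|_\Omega\ge c_0|z|$ (which follows from boundedness and homogeneity of $\Omega=\{\|\cdot\|_\Omega<1\}$) and the fact that $\lambda-p\ge 1>0$, so $x\mapsto x^{\lambda-p}$ is increasing, one obtains $\phi_\lambda(z)\le\Psi_\lambda(z)$ where $\Psi_\lambda(z):=c_\lambda(1-c_0^2|z|^2)^{\lambda-p}$ for $|z|<1/c_0$ and $\Psi_\lambda(z):=0$ otherwise. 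Since $\Psi_\lambda$ is radially symmetric and nonincreasing in $|z|$, the layer-cake argument applies: writing $\Psi_\lambda$ as an integral of indicator functions of Euclidean balls $B(0,r_s)$, applying Tonelli, and using $\int_{B(0,r_s)}|\tilde f|\le|B(0,r_s)|\,f^\ast(0)$, we get
\[
\int_{\mathbb C^n}|\tilde f(z)|\,\Psi_\lambda(z)\,dv(z)\le\|\Psi_\lambda\|_{L^1(\mathbb C^n)}\,f^\ast(0).
\]
Thus everything reduces to the uniform bound $\sup_{\lambda\ge p+1}\|\Psi_\lambda\|_{L^1}<\infty$.

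For that uniform bound I would play $\|\Psi_\lambda\|_{L^1}$ against the normalization $c_\lambda\int_\Omega h(z,z)^{\lambda-p}\,dv(z)=1$. After the substitution $w=c_0z$, $\|\Psi_\lambda\|_{L^1}=c_\lambda c_0^{-2n}\int_{|w|<1}(1-|w|^2)^{\lambda-p}\,dv(w)$. On the other hand, from $\prod_{j}(1-t_j^2)\ge 1-\sum_j t_j^2$ and $\sum_j t_j^2\le c_2|z|^2$ (norm comparison on $\mathbb R^r$ via the frame $e_1,\dots,e_r$ and compactness of $K$) one gets $h(z,z)^{\lambda-p}\ge(1-c_2|z|^2)^{\lambda-p}$ on a fixed Euclidean ball $\{|z|<R\}\subset\Omega$ with $c_2R^2<1$, whence
\[
1=c_\lambda\int_\Omega h(z,z)^{\lambda-p}\,dv(z)\ge c_\lambda c_2^{-n}\int_{|w|<\beta}(1-|w|^2)^{\lambda-p}\,dv(w),\qquad\beta:=\sqrt{c_2}\,R<1.
\]
Dividing, it suffices that the ratio $\int_{|w|<1}(1-|w|^2)^{\lambda-p}\,dv(w)\big/\int_{|w|<\beta}(1-|w|^2)^{\lambda-p}\,dv(w)$ be bounded for $\lambda\ge p+1$; this is elementary, since the tail over $\beta<|w|<1$ is exponentially small in $\lambda$ while the denominator decays only polynomially (like $\lambda^{-n}$), so the ratio tends to $1$ as $\lambda\to\infty$ and is continuous, hence finite, on the remaining compact range of $\lambda$. (Alternatively one may invoke the explicit product formula for $c_\lambda$ and Stirling's formula to get $c_\lambda\asymp\lambda^n$, which yields the same conclusion.)

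I expect the uniform $L^1$-bound --- equivalently, the polynomial control $c_\lambda=O(\lambda^n)$ --- to be the only genuine obstacle; the domination $\phi_\lambda\le\Psi_\lambda$, the layer-cake estimate, and the reduction $\mathcal B_\lambda(f)(0)=\int_\Omega f\,dv_\lambda$ coming from $K^\lambda(\cdot,0)\equiv 1$ are all routine. The conclusion is then $|\mathcal B_\lambda(f)(0)|\le C f^\ast(0)$ with $C=\sup_{\lambda\ge p+1}\|\Psi_\lambda\|_{L^1}$.
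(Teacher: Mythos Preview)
Your proof is correct and rests on the same key pointwise domination as the paper's, namely $h(z,z)^{\lambda-p}\le (1-c'|z|^2)^{\lambda-p}$, obtained from $h(z,z)\le 1-\|z\|_\Omega^2$ and the equivalence of $\|\cdot\|_\Omega$ with the Euclidean norm. The execution, however, is different. The paper discretizes: it slices $\Omega$ into annuli $B\big(0,\sqrt{m/(\lambda-p)}\big)\setminus B\big(0,\sqrt{(m-1)/(\lambda-p)}\big)$, bounds the kernel by its value at the inner radius, enlarges each annular integral to the full ball so that $f^\ast(0)$ enters, and then sums the series $\sum_m m^n e^{-c'(m-1)}$. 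You instead invoke in one stroke the classical lemma that integration against a radially nonincreasing $L^1$ kernel is dominated by $\|\text{kernel}\|_{L^1}\cdot f^\ast(0)$, which is precisely the continuous (layer-cake) version of that annular argument. Both routes then need $c_\lambda=O(\lambda^n)$; the paper simply cites the explicit formula for $c_\lambda$ from Faraut--Kor\'anyi, whereas your comparison with the normalization $c_\lambda\int_\Omega h^{\lambda-p}\,dv=1$ via the lower bound $h(z,z)\ge 1-\sum_j t_j^2\ge 1-c_2|z|^2$ on a small ball gives an intrinsic, self-contained derivation of the same growth estimate. In short: same estimate, same endpoint, but your packaging is more conceptual and avoids the ad hoc discretization, at the price of invoking a standard lemma the paper reproves by hand.
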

\begin{proof} For each $\lambda \geq p+1$ let $m_\lambda$ be the smallest integer such that
\[ \Omega \subseteq B \Big (0,\sqrt{\frac{m_\lambda}{\lambda - p}} \Big ). \]
Writing $\mathbb{C}^n = \bigcup_{m=1}^\infty B\big (0, \sqrt{m/(\lambda - p)} \big ) \setminus B\big (0, \sqrt{(m-1)/(\lambda - p)} \big )$, one gets (using that $\tilde{f} = 0$ outside $\Omega$)
\begin{align*}
|\mathcal B_\lambda(f)(0)| &\leq c_\lambda \int_{\Omega} |f(z)|h(z,z)^{\lambda - p} dv(z)\\
&= c_\lambda \sum_{m=1}^{m_\lambda} \int_{B\big (0, \sqrt{\frac{m}{\lambda-p}}\big ) \setminus B\big (0, \sqrt{\frac{m-1}{\lambda-p}}\big )} |\tilde{f}(z)|h(z,z)^{\lambda - p} dv(z).
\end{align*}
Since the norms $\| \cdot\|_\Omega$ and $| \cdot |$ are equivalent, there is some $c' > 0$ such that
\[ \| z\|_\Omega^2 \geq c' |z|^2 \]
holds true for all $z \in \mathbb C^n$. Therefore, for $m = 1, \dots, m_\lambda$ and $z \in \Omega$ with $\frac{m-1}{\lambda - p} \leq |z|^2 < \frac{m}{\lambda - p}$ it holds
\begin{align*}
0 \leq h(z,z) = \prod_{j=1}^r (1 - t_j^2) \leq 1-t_1^2 = 1 - \| z\|_\Omega^2 \leq 1 - c'|z|^2 \leq 1 - c'\frac{m-1}{\lambda - p}.
\end{align*}
We obtain the following estimate, using again that $\tilde{f} = 0$ outside $\Omega$:
\begin{align*}
|\mathcal B_\lambda(f)(0)| &\leq c_\lambda \sum_{m=1}^{m_\lambda} \int_{B\big (0, \sqrt{\frac{m}{\lambda-p}}\big ) \setminus B\big (0, \sqrt{\frac{m-1}{\lambda-p}}\big )} |\tilde f(z)| \Big (1-c'\frac{m-1}{\lambda-p} \Big )^{\lambda-p} dv(z)\\
&\leq c_\lambda \sum_{m=1}^{m_\lambda} \Big ( 1 - c'\frac{m-1}{\lambda-p}\Big )^{\lambda-p} \int_{B\big (0, \sqrt{\frac{m}{\lambda-p}}\big )} |\tilde f(z)| dv(z)\\
&= \frac{c_\lambda n!}{\pi^n (\lambda - p)^n} \sum_{m=1}^{m_\lambda} m^n\Big (1-c'\frac{m-1}{\lambda-p}\Big )^{\lambda-p} \\
&\quad \quad \cdot \frac{1}{|B(0, \sqrt{m/(\lambda-p)})|} \int_{B\big (0, \sqrt{\frac{m}{\lambda-p}}\big )} |\tilde f(z)| dv(z)\\
&\leq f^\ast(0)\frac{c_\lambda n!}{\pi^n(\lambda - p)^n} \sum_{m=1}^{m_\lambda} m^n \Big ( 1 - c'\frac{m-1}{\lambda-p} \Big )^{\lambda-p}.
\end{align*}
As $(1 - c'\frac{m-1}{\lambda-p})^{\lambda-p} \leq e^{-c'(m-1)}$, it follows
\begin{align*}
|\mathcal B_\lambda(f)(0)| &\leq f^\ast(0) \frac{c_\lambda n!}{\pi^n(\lambda - p)^n} \sum_{m=1}^{\infty} m^ne^{-c'(m-1)}.
\end{align*}
This series is of course convergent. The coefficient $\frac{c_\lambda}{\pi^n(\lambda - p)^n}$ remains bounded as $\lambda \to \infty$ since $c_\lambda \sim \lambda^n$, which can be seen from an explicit formula for $c_\lambda$ contained in \cite{Faraut_Koranyi}.
\end{proof}
\begin{lmm}\label{lmm21}
There exists a constant $C' > 0$, independend of $\lambda \geq p+1$, such that for each $f \in L^\infty(\Omega)$ it holds
\[ |\mathcal B_\lambda(f)(z)| \leq C' f^\ast (z)\]
on a neighbourhood of $0$.
\end{lmm}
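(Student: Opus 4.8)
The plan is to deduce the bound at a point $z$ near the origin from the bound at $0$ established in the previous lemma, by transporting everything with the geodesic symmetry $\varphi_z$ and exploiting that, for $z$ close to $0$, $\varphi_z$ is a small perturbation of the linear map $w \mapsto -w$.

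First I would invoke the Möbius covariance of the Berezin transform: for every automorphism $\psi$ of $\Omega$ and every $f \in L^\infty(\Omega)$ one has $\mathcal{B}_\lambda(f)(\psi(0)) = \mathcal{B}_\lambda(f \circ \psi)(0)$, which follows from unitarity of the weighted composition operators $U_\psi$ on $L_\lambda^2(\Omega)$ together with the fact that $U_{\varphi_z} k^\lambda(\cdot,0)$ is a unimodular multiple of $k^\lambda(\cdot, z)$. Taking $\psi = \varphi_z$, so that $\psi(0) = z$, and applying the previous lemma to the symbol $f \circ \varphi_z \in L^\infty(\Omega)$ (which has the same sup norm as $f$), we get
\[ |\mathcal{B}_\lambda(f)(z)| = |\mathcal{B}_\lambda(f \circ \varphi_z)(0)| \leq C\,(f \circ \varphi_z)^\ast(0) \]
for all $\lambda \geq p+1$, with $C$ the constant from the previous lemma. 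It therefore remains to show that $(f\circ\varphi_z)^\ast(0) \leq C''\, f^\ast(z)$ for $z$ in some fixed neighbourhood of $0$, with $C''$ independent of $z$, $f$ and $\lambda$.

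To do this I would use that $\varphi_0$ is the map $w \mapsto -w$ and that $(z,w) \mapsto \varphi_z(w)$ depends smoothly on $z$ and extends to a neighbourhood of $\overline\Omega$; hence there is a neighbourhood $U$ of $0$ and constants $L, J > 0$ such that for every $z \in U$ the involution $\varphi_z$ is $L$-Lipschitz on $\overline\Omega$ and has real Jacobian bounded by $J$ there (for $U$ small, $L$ and $J$ can be taken close to $1$). Fixing $\rho > 0$ and unwinding the definition of the maximal function, a change of variables $u = \varphi_z(w)$ in $\int_{B(0,\rho)} |\widetilde{f\circ\varphi_z}|\,dv$ — using that this integrand vanishes off $\Omega$, that $\varphi_z(B(0,\rho)\cap\Omega) \subseteq B(z, L\rho)$ since $\varphi_z(0) = z$, and that $|B(z,L\rho)| = L^{2n}|B(0,\rho)|$ — yields
\[ \frac{1}{|B(0,\rho)|}\int_{B(0,\rho)} |\widetilde{f\circ\varphi_z}|\,dv \leq J L^{2n}\,\frac{1}{|B(z,L\rho)|}\int_{B(z,L\rho)}|\tilde f|\,dv \leq J L^{2n} f^\ast(z). \]
Taking the supremum over $\rho > 0$ gives $(f\circ\varphi_z)^\ast(0) \leq J L^{2n} f^\ast(z)$ on $U$, and combining with the previous estimate proves the lemma with $C' = C J L^{2n}$, which does not depend on $\lambda$.

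The main obstacle is precisely this localization: globally the Hardy-Littlewood maximal function does not transform well under the geodesic symmetries (the Lipschitz constant and the Jacobian distortion of $\varphi_z$ are not uniform as $z$ ranges over all of $\Omega$), so one genuinely must restrict to a neighbourhood of $0$ where $\varphi_z$ is nearly linear. A secondary point requiring care is that $\varphi_z$ maps $\Omega$ onto $\Omega$ but the Euclidean ball $B(0,\rho)$ need not lie inside $\Omega$, so the substitution has to be performed on $B(0,\rho)\cap\Omega$ and combined with the convention $\tilde f = 0$ off $\Omega$.
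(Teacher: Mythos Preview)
Your argument is correct and takes a genuinely different route from the paper's proof. The paper argues softly: for $f\neq 0$ one has $f^\ast(0)>0$, and continuity of $\mathcal B_\lambda(f)$ together with lower semicontinuity of $f^\ast$ then shows that the open set where $|\mathcal B_\lambda(f)|<2Cf^\ast$ contains the origin --- no analysis of how the maximal function interacts with the symmetries $\varphi_z$ is needed. You instead transport the estimate from $0$ to $z$ via the covariance $\mathcal B_\lambda(f)(z)=\mathcal B_\lambda(f\circ\varphi_z)(0)$, apply the previous lemma to $f\circ\varphi_z$, and bound $(f\circ\varphi_z)^\ast(0)$ by a constant times $f^\ast(z)$ through a change of variables, using that for $z$ near $0$ the map $\varphi_z$ is a uniformly bi-Lipschitz perturbation of $w\mapsto -w$ with uniformly bounded Jacobian (the smooth extension of $\varphi_z$ to a neighbourhood of $\overline\Omega$ is indeed available in the Harish-Chandra realization). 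The payoff of this extra work is explicit uniformity: your neighbourhood $U$ and constant $C'=CJL^{2n}$ depend only on $\Omega$, whereas the paper's neighbourhood, read literally, may vary with $\lambda$; since the subsequent lemma forms $\limsup_{\lambda\to\infty}$ over a fixed neighbourhood, your version is in fact better adapted to that application.
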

\begin{proof}
For $f = 0$ this is trivial. Otherwise, it holds $f^\ast(0) > 0$ by the definition of $f^\ast$ and the result follows from the previous lemma, continuity of $\mathcal B_\lambda(f)$ and lower semicontinuity of $f^\ast$, i.e. the fact that
\[ \{ z \in \mathbb C^n; ~ f^\ast(z) > \frac{1}{2C} |\mathcal B_\lambda(f)(0)| \} \]
is open (with $C$ from the previous lemma).
\end{proof}
\begin{lmm}
For $f \in L^\infty(\Omega)$ it holds $\mathcal B_\lambda f\to f$ almost everywhere on a neighbourhood of $0$.
\end{lmm}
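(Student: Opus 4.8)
The plan is to run the classical Lebesgue--differentiation argument, with Lemma~\ref{lmm21} playing the role of the maximal inequality and the pointwise convergence of the Berezin transform on a dense subclass playing the role of the ``smooth model''. First I would fix, once and for all, a neighbourhood $U \subseteq \Omega$ of $0$ and a constant $C' > 0$ such that
\[ |\mathcal B_\lambda(g)(z)| \le C'\, g^\ast(z) \qquad \text{for all } z \in U,\ \lambda \ge p+1,\ g \in L^\infty(\Omega), \]
as furnished by Lemma~\ref{lmm21}; in particular $\limsup_{\lambda\to\infty}|\mathcal B_\lambda(g)(z)| \le C'\, g^\ast(z)$ on $U$. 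For $\alpha > 0$ put
\[ E_\alpha := \Big\{ z \in U : \limsup_{\lambda \to \infty} |\mathcal B_\lambda(f)(z) - f(z)| > \alpha \Big\}. \]
It suffices to show $|E_\alpha| = 0$ for every $\alpha > 0$, since then $\bigcup_{k\in\mathbb N} E_{1/k}$ is a Lebesgue null set and $\mathcal B_\lambda(f)(z) \to f(z)$ for every $z \in U$ outside it.

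Next I would invoke density. Since $\Omega$ is bounded we have $L^\infty(\Omega) \subseteq L^1(\Omega, dv)$, and $C_c(\Omega) \subseteq \BUC(\Omega)$ is $\|\cdot\|_{L^1}$-dense in $L^1(\Omega, dv)$. Fix $\varepsilon > 0$ and pick $g \in C_c(\Omega)$ with $\|f-g\|_{L^1} < \varepsilon$. For such $g$ it is well known — this is precisely the fact used in the proof of Proposition~\ref{cor5}, namely that the measures $|k^\lambda(\cdot,z)|^2\,dv_\lambda$ concentrate at $z$ as $\lambda \to \infty$ — that $\mathcal B_\lambda(g)(z) \to g(z)$ for every $z \in \Omega$. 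From the splitting
\[ \mathcal B_\lambda(f)(z) - f(z) = \mathcal B_\lambda(f-g)(z) + \big(\mathcal B_\lambda(g)(z) - g(z)\big) + \big(g(z) - f(z)\big) \]
and the maximal bound applied to $f-g$, I obtain for every $z \in U$
\[ \limsup_{\lambda\to\infty} |\mathcal B_\lambda(f)(z) - f(z)| \le C'\,(f-g)^\ast(z) + |(f-g)(z)|, \]
so that
\[ E_\alpha \subseteq \Big\{ z \in U : (f-g)^\ast(z) > \tfrac{\alpha}{2C'} \Big\} \cup \Big\{ z \in U : |(f-g)(z)| > \tfrac{\alpha}{2} \Big\}. \]

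Finally, the Hardy--Littlewood weak-type $(1,1)$ inequality on $\mathbb R^{2n}$ bounds the measure of the first set by $\tfrac{2 C_0 C'}{\alpha}\,\|\widetilde{f-g}\|_{L^1(\mathbb C^n)} = \tfrac{2 C_0 C'}{\alpha}\,\|f-g\|_{L^1}$ (with $C_0$ the universal constant in that inequality), while Chebyshev's inequality bounds the measure of the second set by $\tfrac{2}{\alpha}\,\|f-g\|_{L^1}$. Both are $O(\varepsilon/\alpha)$, and since $\varepsilon$ was arbitrary this forces $|E_\alpha| = 0$; taking the union over $\alpha = 1/k$ finishes the argument.

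The step I expect to require the most care is the very first one: Lemma~\ref{lmm21} must be available with the neighbourhood $U$ and the constant $C'$ chosen \emph{independently of the symbol}, so that the maximal bound can be applied simultaneously to all the errors $f - g$ (and uniformly in $\lambda$, as $\lambda \to \infty$). This is the point where the M\"obius invariance $\mathcal B_\lambda(f)(z) = \mathcal B_\lambda(f \circ \varphi_z)(0)$, combined with the local comparability — uniform for $z$ near $0$ — of Euclidean balls under the geodesic symmetries $\varphi_z$, is essential, and it is exactly what makes the bounded symmetric domain case more delicate than the Segal--Bargmann case. The other ingredients (density of $C_c(\Omega)$, pointwise convergence of the Berezin transform on continuous functions, the weak-type $(1,1)$ estimate, and Chebyshev's inequality) are routine.
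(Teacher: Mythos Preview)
Your argument is essentially the same as the paper's: approximate $f$ in $L^1$ by a continuous $g$, use pointwise convergence of $\mathcal B_\lambda(g)$ on the smooth part, and control the error $f-g$ via the maximal estimate of Lemma~\ref{lmm21} together with the Hardy--Littlewood weak-$(1,1)$ inequality and Markov's inequality. The splitting, the sets $E_\alpha$, and the measure estimates are identical in structure.

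The one substantive difference is exactly the point you flag. The paper does \emph{not} invoke a symbol-independent neighbourhood: it applies Lemma~\ref{lmm21} to each individual error $f-g$, obtaining a neighbourhood $O_{f-g}$ that depends on the approximant $g$ (hence on $\delta$), and then lets $\delta\to 0$. You instead ask for a single neighbourhood $U$ and constant $C'$ valid for all symbols, so that the maximal bound applies simultaneously to every $f-g$. Your version is the cleaner one, and your sketch of how to obtain it --- the transformation rule $\mathcal B_\lambda(f)(z)=\mathcal B_\lambda(f\circ\varphi_z)(0)$ combined with the fact that for $z$ in a fixed compact neighbourhood of $0$ the maps $\varphi_z$ distort Euclidean balls and volumes only by bounded factors, so that $(f\circ\varphi_z)^\ast(0)\lesssim f^\ast(z)$ uniformly --- is correct. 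This uniform form of Lemma~\ref{lmm21} is in fact what makes the passage $\delta\to 0$ unproblematic in either presentation, so your explicit attention to it is warranted.
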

\begin{proof}
Let $\varepsilon, \delta > 0$ and further let $g \in C_b(\overline{\Omega})$ such that $\| f - g\|_{L^1} < \delta$. Take $O_{f-g}$ to be the neighbourhood of 0 obtained from Lemma \ref{lmm21} applied to the function $f-g$. We are going to prove that
\[ \{ w \in O_{f-g};~ \limsup_{\lambda \to \infty} |\mathcal B_\lambda(f)(w) - f(w)| > \varepsilon \} \]
is a set of measure zero. It holds
\begin{align*}
|\mathcal B_\lambda(f)(w) &- f(w)|\\
 & \quad \leq |\mathcal B_\lambda(f)(w) - \mathcal B_\lambda(g)(w)| + |\mathcal B_\lambda(g)(w) - g(w)| + |g(w) - f(w)|.
\end{align*}
As $g$ is uniformly continuous, it holds $\mathcal B_\lambda(g) \to g$ uniformly as $\lambda \to \infty$, hence
\begin{align*}
\limsup_{\lambda \to \infty} |\mathcal B_\lambda(f)(w) - f(w)| &\leq \limsup_{\lambda \to \infty} |\mathcal B_\lambda(f)(w) - \mathcal B_\lambda(g)(w)| + |g(w) - f(w)|.
\end{align*}
By Markov's inequality,
\[ |\{ w \in \Omega; ~ |g(w) - f(w)| > \varepsilon\} | \leq \frac{\| g-f\|_{L^1}}{\varepsilon} \leq \frac{\delta}{\varepsilon}. \]
Further, it holds for $z \in O_{f-g}$
\begin{align*}
|\mathcal B_\lambda(f)(w) - \mathcal B_\lambda(g)(w)| = |\mathcal B_\lambda(f-g)(w)| \leq C'(f-g)^\ast(w).
\end{align*}
By the weak $(1,1)$-inequality for the Hardy-Littlewood maximal function, there exists $C_1 > 0$ independend of $\varepsilon, \delta$ such that
\begin{align*}
|\{ w \in \Omega;~ (f-g)^\ast(w) > C'\varepsilon \}| \leq \frac{C_1}{C'\varepsilon}\| f-g\|_{L^1} \leq \frac{C_1\delta}{C'\varepsilon}.
\end{align*}
Setting everything together, we obtain
\begin{align*}
|\{ w \in O_{f-g};~ \limsup_{\lambda \to \infty} |\mathcal B_\lambda(f)(w) - f(w)| > \varepsilon \}| \leq \frac{\delta}{\varepsilon} \Big ( 1 + \frac{C_1}{C'} \Big ).
\end{align*}
As $\delta > 0$ was arbitrary, it follows that the set is a zero set for each $\varepsilon > 0$.
\end{proof}

\begin{proof}[Proof of Proposition \ref{unitballberezin}]
By the previous lemma, $\mathcal B_\lambda(f) \to f$ on a neighbourhood of zero for arbitrary $f \in L^\infty(\Omega)$. Therefore, for any $z \in \Omega$, it holds
\[ \mathcal B_\lambda(f \circ \varphi_z)(w)  \to f\circ \varphi_z(w) \]
almost everywhere on a neighbourhood of $0$. As the Berezin transform is invariant under composition with the $\varphi_z$, it follows
\[ \mathcal B_\lambda(f \circ \varphi_z)(w) = \mathcal B_\lambda(f)(\varphi_z(w)) \to f(\varphi_z(w)) \]
almost everywhere on a zero neighbourhood, hence for each $z \in \Omega$ there exists a neighbourhood $O_z$ of $z$ such that
\[ \mathcal B_\lambda(f)(w) \to f(w) \]
almost everywhere on $O_z$. $\{ O_z\}_{z \in \Omega}$ is an open cover of $\Omega$, hence has a countable subcover. As the union of countably many zero sets is  still a zero set, it follows that $\mathcal B_\lambda(f) \to f$ almost everywhere on the whole of $\Omega$.

It remains to prove
\[ \| \mathcal B_\lambda f\|_\infty \to \| f\|_\infty, \quad \lambda \to \infty, \]
which is identical to the case of the Segal-Bargmann space. Let $\varepsilon > 0$. By Egorov's Theorem, we can choose a set $A_\varepsilon \subseteq \Omega$ such that $|A_\varepsilon| > 0$, $|f(z)| \geq \| f\|_\infty - \varepsilon$ for $z \in A_\varepsilon$ and $\mathcal B_\lambda(f) \to f$ uniformly on $A_\varepsilon$. Recall that $|\mathcal B_\lambda(f)(z)| \leq \| T_f\|_\lambda \leq  \| f\|_\infty$ holds for all $z \in \Omega$. Then,
\begin{align*}
\| f\|_\infty &\geq \limsup_{\lambda \to \infty} \| \mathcal B_\lambda(f)\|_\infty \geq \liminf_{\lambda \to \infty} \| \mathcal B_\lambda(f)\|_\infty\\
&\geq \liminf_{\lambda \to \infty} \| \mathcal B_\lambda(f)|_{A_\varepsilon}\|_\infty \geq \| f\|_\infty - \varepsilon.
\end{align*}
\end{proof}

\section*{Acknowledgement}
The author wants to thank Wolfram Bauer for his help and support and Raffael Hagger for valuable discussions.

\bibliographystyle{amsplain}
\bibliography{References}

\end{document}